\newcommand*\pFq[6][8]{%
  \begingroup 
  \pFqmuskip=#1mu\relax
  \mathcode`\,=\string"8000
  \begingroup\lccode`\~=`\,
  \lowercase{\endgroup\let~}\pFqcomma
  {}_{#2}F_{#3}{\left[\genfrac..{0pt}{}{#4}{#5};#6\right]}%
  \endgroup
}
\newcommand{\pFqcomma}{\mskip\pFqmuskip}
\newcommand{\Z}{\mathbb{Z}}
\newcommand{\Q}{\mathbb{Q}}
\newcommand{\R}{\mathbb{R}}
\newcommand{\C}{\mathbb{C}}
\newcommand{\D}{\mathscr{D}}
\newcommand{\spec}{\operatorname{Spec}}
\newcommand{\ord}{\operatorname{ord}}
\theoremstyle{plain}
    \newtheorem{thm}{Theorem}[section]
    \newtheorem{ppn}[thm]{Proposition}
    \newtheorem{lem}[thm]{Lemma}
    \newtheorem{cor}[thm]{Corollary}
\theoremstyle{definition}
    \newtheorem{dfn}[thm]{Definition}
\theoremstyle{remark}
    \newtheorem{rmk}[thm]{Remark}
    \newtheorem{conj}[thm]{Conjecture}
\numberwithin{equation}{section}
\title{Regulator of the Hesse cubic curves and hypergeometric functions}
\author{Yusuke Nemoto}
\email{y-nemoto@waseda.jp}
\address{Graduate School of Science and Engineering, Chiba University, 
Yayoicho 1-33, Inage, Chiba, 263-8522 Japan.}
\date{\today}
\subjclass[2020]{19F27, 33C20, 33C70}
\keywords{Regulators, $L$-functions, 
Hypergeometric functions}
\begin{document}

\maketitle

\begin{abstract}
We construct some integral elements in the motivic cohomology of the Hesse cubic curves and 
express their regulators in terms of generalized hypergeometric functions and Kamp{\'e} de F{\'e}riet hypergeometric functions. 
By using these hypergeometric expressions, 
we obtain numerical examples of the Bloch-Beilinson conjecture on special values of $L$-functions. 
\end{abstract}

\section{Introduction}
To an algebraic variety, or more generally to a motive $M$ over a number field, its $L$-function $L(M, s)$ is a holomorphic function on a complex right half plane. 
It is conjectured that $L(M, s)$ is continued analytically to a meromorphic function on the whole complex plane and satisfies a functional equation 
relating it with $L(M^{\vee}, 1-s)$, where $M^{\vee}$ is the dual motive of $M$. 
Beilinson \cite{Beilinson} conjectures that the behavior of $L(M, s)$ at an integer is explained by the regulator map, which is a map from the motivic cohomology to the Deligne cohomology. 
This conjecture is a vast generalization of the class number formula and 
the case of elliptic curves is originally due to Bloch \cite{Bloch}.
Relations among $L$-values, regulators and hypergeometric functions have been known for example by Otsubo \cite{Otsubo}, Rogers-Zudilin \cite{RZ} and Asakura \cite{A}. These objects are also studied in relation with Mahler measures: see for example Deninger \cite{Deninger}, Rodriguez Villegas \cite{Rod}, Rogers \cite{Rog} and Mellit \cite{Mellit}.

Let $X_t$ ($t \in \C \setminus \mu_3$) be the Hesse cubic curve over $\C$ defined by
 $$x_0^3+y_0^3+z_0^3=3tx_0y_0z_0,$$
 where $\mu_3$ is the cubic roots of unity. 
In this paper, we study the regulator map
$$r_{\D} \colon H^2_{\mathscr{M}}(X_t, \Q(2)) \to 
 H^2_{\mathscr{D}}(X_t, \Q(2))={\rm Hom}(H_1(X_t(\C), \Z), \C/\Q(2)). $$ 
  The group $\mu_3 \rtimes S_3$ acts on $X_t$, where $S_3$ is the symmetric group of degree $3$ (see Section 2). 
 We construct some elements in the motivic cohomology ($\zeta=e^{2 \pi i/3}$, $\rho=(1 \ 2 \ 3)$)
$$\xi(\zeta)_t, \xi(\rho z)_t \in H^2_{\mathscr{M}}(X_{t}, \Q(2)) \quad (z \in \mu_3)$$
from the $3$-torsion points on $X_t$ (viewed as an elliptic curve)  
using the method of Dokchitser, de Jeu and Zagier \cite{Zagier} (see Section 3.2).  
The main theorem of this paper (Theorem \ref{thm:main}) expresses the regulators of these elements in terms of generalized hypergeometric functions and Kamp{\'e} de F{\'e}riet hypergeometric functions defined by 
\begin{align*}
{_{3}F_{2}}\left[ \left. 
\begin{matrix}
a_1, a_2, a_3 \\
b_2,  b_2
\end{matrix}
\right| x
\right]
&=\sum_{n=0}^{\infty} \dfrac{(a_1)_n (a_2)_n (a_3)_n}{(b_1)_n  (b_2)_n}
 \dfrac{x^n}{(1)_n}, \\
F_{1;1;0}^{1;2;1}\left[\left.
\begin{matrix}
a  \\
c
\end{matrix}
; 
\begin{matrix}
b_1, b_2\\
d
\end{matrix}
;
\begin{matrix}
b^{\prime} \\
-
\end{matrix}
\right| x, y
\right]
&=
\sum_{m, n=0}^{\infty} \frac{(a)_{m+n}(b_1)_m (b_2)_m (b')_n}{(c)_{m+n}(d)_m} \frac{x^m}{(1)_m} \frac{y^n}{(1)_n},
\end{align*}
where $(\alpha)_n=\Gamma(\alpha+n)/\Gamma(\alpha)$ is the Pochhammer symbol.

Our main result is the following. 
Let $B(s, t)$ be the beta function and put $B_s=B(s, s)$. 
We put functions $(z \in \mu_3)$
\begin{align*}
&H_1(t)=B_{\frac13}t{_{3}F_{2}}\left[ \left. 
\begin{matrix}
\frac13,\frac13,\frac13 \\
\frac{4}{3}, \frac{2}{3}
\end{matrix}
\right| t^3
\right],
\quad 
H_2(t)=\frac12{B_{\frac23}}t^2{_{3}F_{2}}\left[ \left. 
\begin{matrix}
\frac23,\frac23,\frac23 \\
\frac{5}{3}, \frac{4}{3}
\end{matrix}
\right| t^3
\right], \\
&K_{1, z}(t)=
-\frac32
z
B_{\frac13}
t^2
F_{1;1;0}^{1;2;1}\left[\left.
\begin{matrix}
\frac23 \\
\frac53
\end{matrix}
; 
\begin{matrix}
\frac13, \frac13\\
\frac23
\end{matrix}
;
\begin{matrix}
1 \\
-
\end{matrix}
\right| t^3, t^3
\right]
+
z^2
B_{\frac{1}{3}}
 t^3
F_{1;1;0}^{1;2;1}\left[\left.
\begin{matrix}
1 \\
2
\end{matrix}
; 
\begin{matrix}
\frac13, \frac13\\
\frac23
\end{matrix}
;
\begin{matrix}
1 \\
-
\end{matrix}
\right| t^3, t^3
\right], \\
&K_{2, z}(t)=-
z
B_{\frac{2}{3}}
t^3
F_{1;1;0}^{1;2;1}\left[\left.
\begin{matrix}
1 \\
2
\end{matrix}
; 
\begin{matrix}
\frac23, \frac23\\
\frac43
\end{matrix}
;
\begin{matrix}
1 \\
-
\end{matrix}
\right| t^3, t^3
\right]
+
\frac34
z^2
B_{\frac{2}{3}}
t^4
F_{1;1;0}^{1;2;1}\left[\left.
\begin{matrix}
\frac43 \\
\frac73
\end{matrix}
; 
\begin{matrix}
\frac23, \frac23\\
\frac43
\end{matrix}
;
\begin{matrix}
1 \\
-
\end{matrix}
\right| t^3, t^3
\right], \\
\intertext{and constants}
&C_1=
3
 B_{\frac13}{_{3}F_{2}}\left[ \left. 
\begin{matrix}
\frac13,\frac13,1 \\
\frac{4}{3}, \frac{2}{3}
\end{matrix}
\right| 1
\right], 
\quad 
C_2
=
\frac32
B_{\frac23}{_{3}F_{2}}\left[ \left. 
\begin{matrix}
\frac23,\frac23,1 \\
\frac{5}{3}, \frac{4}{3}
\end{matrix}
\right| 1
\right].
\end{align*}
We will define a symplectic basis $\{B, A\}$ of $H_1(X_t(\C), \Z)$ in Section 2. 
\begin{thm}\label{thm:main}
As multivalued functions on $\C \setminus \mu_3$, we have the following equalities modulo $\Q(2)$. 
\begin{enumerate}
\item 
 \begin{align*}
& r_{\D}(\xi(\zeta)_t)(A)
=-9\zeta^2H_1\left(-\frac{t}{(1-t^3)^{\frac13}}\right)
+
9\zeta H_2\left(-\frac{t}{(1-t^3)^{\frac13}}\right), \\
& r_{\D}(\xi(\zeta)_t)(B)
=
-3\zeta(1-\zeta)\left(
H_1\left(-\frac{t}{(1-t^3)^{\frac13}}\right)
+
 H_2\left(-\frac{t}{(1-t^3)^{\frac13}}\right)
 \right).
\end{align*}
\item For each $z \in \mu_3$,
\begin{align*}
 r_{\D}(\xi(\rho z)_t)(A)
&=-(1-\zeta)\left(H_1(t)+K_{1, z^2}(t)\right)+(1-\zeta^2)\left(H_2(t)+K_{2, z}(t)\right)\\
&\quad -z(1-\zeta)C_1-z^2(1-\zeta^2)C_2, \\
 r_{\D}(\xi(\rho z)_t)(B)
&=H_1(t)+K_{1, z^2}(t)-H_2(t)-K_{2, z^2}(t)+zC_1+z^2C_2.
\end{align*}
\end{enumerate}
\end{thm}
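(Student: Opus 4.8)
The plan is to compute $r_{\D}(\xi)(\gamma)$ for $\gamma \in \{A, B\}$ directly from the integral formula for the regulator applied to the explicit symbols constructed in Section 3.2. Recall that for a class $\xi = \sum_i \{f_i, g_i\}$ in $H^2_{\mathscr{M}}(X_t, \Q(2))$ coming from Milnor $K_2$ of the function field (modulo the tame symbol conditions guaranteeing integrality), the Deligne regulator is given by
\[
r_{\D}(\xi)(\gamma) = \frac{1}{2\pi i}\int_{\gamma} \sum_i \eta(f_i, g_i), \qquad \eta(f,g) = \log|f|\, d\arg g - \log|g|\, d\arg f,
\]
up to $\Q(2)$. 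So first I would write down, from the Dokchitser–de Jeu–Zagier construction applied to the $3$-torsion points of $X_t$, the precise functions $f, g$ on $X_t$ whose symbols constitute $\xi(\zeta)_t$ and $\xi(\rho z)_t$; these are ratios of the linear forms $x_0, y_0, z_0$ (and their $\zeta$-twists) vanishing at the relevant flexes, so $\eta$ becomes an explicit $1$-form in the affine coordinate.

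Next I would choose a convenient rational parametrization of $X_t$ — for instance the standard one sending the curve to Weierstrass form, or better, working on the open subset where one of the coordinates is a local parameter — and pull back $\eta(f,g)$ to a $1$-form on (an open part of) $\mathbb{P}^1$ or on the relevant double/triple cover. The cycles $A$ and $B$ of the symplectic basis $\{B, A\}$ from Section 2 then become explicit paths, and $r_{\D}(\xi)(\gamma)$ becomes an honest line integral $\int_\gamma \omega$ of an algebraic $1$-form against $dt'/t'$-type logarithmic differentials. The key computational input is to expand the integrand as a power series in $t$ (or in $t^3$, exploiting the $\mu_3$-symmetry) and integrate term by term: each monomial integral over $A$ or $B$ produces a ratio of Gamma values via the Euler beta integral $\int_0^1 u^{a-1}(1-u)^{b-1}du = B(a,b)$, and resumming gives exactly the ${_3F_2}$ and Kampé de Fériet series in the statement. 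The change of variable $t \mapsto -t/(1-t^3)^{1/3}$ in part (i) should emerge from a projective automorphism in $\mu_3 \rtimes S_3$ (an element not preserving the chosen affine chart) relating the symbol $\xi(\zeta)_t$ to a "standard" one; I would identify that automorphism explicitly and track its effect on both the differential form and the homology basis.

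Concretely the steps are: (1) write $\xi(\zeta)_t$ and $\xi(\rho z)_t$ as explicit sums of symbols $\{f,g\}$, verifying the integrality (vanishing of tame symbols) — this may already be done in Section 3.2, in which case I just quote it; (2) compute the $1$-form $\eta(f,g)$ in a uniformizing coordinate and pull it back along the parametrization; (3) integrate over $A$: expand in powers of $t^3$, evaluate each coefficient as a beta integral, and recognize the resulting hypergeometric series as $H_1, H_2, K_{1,z}, K_{2,z}$ — the Kampé de Fériet functions arise because the symbol involves a product of two logarithmic forms whose series expansions convolve, producing the double sum; (4) integrate over $B$, which by the symplectic pairing and a residue/monodromy computation differs from the $A$-integral by the "period at $t=1$" contributions, yielding the constants $C_1, C_2$ and the $(1-\zeta)$, $(1-\zeta^2)$ factors; (5) for part (i), apply the automorphism of Step after matching, absorbing the substitution. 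The main obstacle will be Step (4): controlling the $B$-cycle integral rigorously, since $B$ typically passes near a singular fiber or a branch point, so I expect to need either a careful contour deformation picking up residues at $t^3 = 1$ (which explains the appearance of the ${_3F_2}[\cdots;1]$ constants $C_1, C_2$ evaluated at the boundary) or an appeal to the known period relations for the Hesse pencil; keeping track of the $\Q(2)$-ambiguity and the multivaluedness of the fractional powers throughout is the delicate bookkeeping that makes this more than a routine computation.
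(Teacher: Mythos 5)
Your proposal attempts a direct evaluation of the regulator integral on a fixed curve $X_t$, whereas the paper's proof works with the whole Hesse family and differentiates in $t$. This is not just a stylistic difference: the key idea you are missing is Lemma \ref{prop:1} (Asakura), which says that if $\operatorname{dlog}(\xi)=dt\wedge\varphi$ then $\frac{d}{dt}\,r_{\D}(\xi_t)(\gamma)=\int_\gamma\varphi$. Combined with Proposition \ref{ppn:1} (computing $\operatorname{dlog}$ of the symbols as a rational multiple of $dt\wedge\omega$) and the Matsumoto--Terasoma--Yamazaki period formulas (Theorem \ref{thm:1}), this turns the transcendental integral $\int_\gamma \log f\,\tfrac{dg}{g}$ into an ordinary antiderivative in $t$ of explicit ${}_2F_1(t^3)$'s times $(1-t^3)^{-1}$; the term-wise integration that produces the Kamp\'e de F\'eriet double sums happens in the \emph{base} variable $t$, not on the curve. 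Your step (3) — expanding the integrand on $X_t$ in powers of $t$ and evaluating each coefficient as a beta integral over the cycles $A$, $B$ — works only at $t=0$, where $X_0$ is the Fermat cubic with its large abelian automorphism group (this is exactly Ross--Otsubo); for general $t$ the cycle integrals of monomials are themselves ${}_2F_1$'s, not Gamma quotients, and the iterated (non-abelian) nature of $\log f\,\tfrac{dg}{g}$ makes the direct approach intractable as described.

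Two further specific points would fail as stated. First, the constants $C_1,C_2$ do not arise from residues at the singular fibers $t^3=1$ picked up by the $B$-cycle: they occur in both the $A$- and $B$-formulas of part (ii), and in the paper they are the constants of integration, determined by evaluating at $t=0$ and comparing $\xi(\rho\zeta^i)_0$ with $(g^i)^*(1+\rho+\rho^2)^*$ of Ross's element $\{1+x,1+y\}$ (Lemma \ref{Ross}), whose regulator Otsubo expresses via ${}_3F_2(1)$-values (Theorem \ref{thm:Otsubo}); the argument $1$ in those ${}_3F_2$'s is the unit argument of a hypergeometric series, not the location of a bad fiber. Second, the substitution $t\mapsto -t/(1-t^3)^{1/3}$ in part (i) is not induced by an element of $\mu_3\rtimes S_3$ (that group acts fiberwise over $\mathbb{P}^1$ and cannot move $t$); it comes from the Cvijovic--Miller reduction formula converting $F^{1;2;1}_{1;1;0}(t^3,t^3)$ into $(1-t^3)^{-a}\,{}_3F_2\bigl(t^3/(t^3-1)\bigr)$, applied after the term-wise integration; the vanishing of the constant term for $\xi(\zeta)_t$ is then seen from the fact that $\xi(\zeta)_0$ is pulled back from $\mathbb{P}^1$ and hence has trivial regulator.
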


A key point of our proof is to regard the Hesse cubic curves as a family.
 Then the regulator becomes a holomorphic multi-valued function of $t \in \C \setminus \mu_3$. 
By Matsumoto-Terasoma-Yamazaki \cite{kts}, 
the periods of $X_t$ are expressed in terms of Gauss's hypergeometric functions ${}_2F_1(t^3)$ (see Section 2). 
As a bi-product, we obtain a relation among hypergeometric functions from Riemann's relation  (Corollary \ref{HGR}).  
Then, by using Asakura's result \cite{A} (see Lemma \ref{prop:1}), 
 the derivative of the regulator is written in terms of the periods, hence of ${}_2F_1(t^3)$'s. 
 By a term-wise integration, we obtain Kamp{\'e} de F{\'e}riet hypergeometric functions, up to a constant. 
To determine the constant term, we look at the case $t=0$, when $X_0$ is the Fermat cubic.
For the Fermat curve of arbitrary degree, Otsubo \cite{Otsubo} expresses the regulator of Ross's element in terms of values ${}_3F_2(1)$ (see Theorem \ref{thm:Otsubo}).  
By comparing Ross's element with ours, we obtain the constant term, hence the theorem.

As an application of the main theorem, we verify numerically the Bloch-Beilinson conjecture.
Let $t \in \Q \setminus \{1\}$ and $X_t$ denote the Hesse cubic curve over $\Q$ and 
let $L(X_t, s)$ be the $L$-function of (the first cohomology of) $X_t$. 
Let $$r_{\D, \Q} \colon H^2_{\mathscr{M}}(X_t, \Q(2))  
\to \operatorname{Hom}(H_1(X_t(\C), \Q)^-, \R(1))$$ 
be the regulator map over $\Q$, which is induced by $r_{\D}$ (see Section 4.3). 
Here, $-$ denotes the $(-1)$-eigenspace with respect to the action of the complex conjugation $F_{\infty}$ acting on $X_t(\C)$. 
The Bloch-Beilinson conjecture predicts that the special value $L(X_t, 2)$ is described by the regulator $r_{\D, \Q}$ of elements in the integral part of the motivic cohomology $H^2_{\mathscr{M}}(X_t, \Q(2))_{\Z}$. 
The conjecture also predicts that $\dim H^2_{\mathscr{M}}(X_t, \Q(2))_{\Z} =1$, but very little is known about this. 
The sum $\frac13 \sum_{z \in \mu_3} \xi(z \rho)_t$ is defined over $\Q$ and defines an element  $\xi_{{\rm Hes}, t} \in H^2_{\mathscr{M}}(X_t, \Q(2))$. 
In fact, this element coincides with another element considered by Deninger \cite{Deninger} and Rodriguez Villegas \cite{Rod} (Proposition \ref{prop:element}).
See Section 4.5 for the relation with their results and ours. 
 We will prove that this element is integral if and only if $3t \in \Z$ (Theorem \ref{Hesse symbol}) by computing explicitly the boundary map in $K'$-theory, solving a conjecture in \cite{Rod}.

\begin{thm} \label{num}
For $3t \in \Z$, put $\gamma=A-F_{\infty}(A) \in H_1(X_t(\C), \Q)^-$ and  
$$Q_t=\frac{1}{2\pi i}r_{\D, \Q}(\xi_{{\rm Hes}, t})(\gamma)/\left(\frac1{\pi^2}L(X_t, 2)\right).$$
Then we have numerically the following table with
the digit of precision at least 15. 
\begin{table}[H]
  \begin{tabular}{c|ccccccccc}  \hline
  $3t$   & $-20$ & $-	19$ & $-18$ & $-17$ & $-16$ & $-15$ &$-14$& $-13$ &$-12$  \\ \hline
   $Q_t$  &$-\overset{}{\dfrac{8027}{256}}$ & $-\dfrac{313}{6}$ &$-\dfrac{1953}{76}$ &$-\dfrac{1235}{36}$  & $-\dfrac{4123}{96}$ & $-\dfrac{63}{2}$ & $-\dfrac{2771}{84}$ & $-\dfrac{139}{4}$ &$-\dfrac{117}{4}$   \\ [1.5ex] \hline
  \end{tabular}
  
  \vspace{3mm}
  
  \begin{tabular}{c|ccccccccccc}  \hline
  $3t$   & $-11$ & $-	10$ & $-9$ & $-8$ & $-7$ & $-6$ &$-5$& $-4$ &$-3$ &$-2$ & $-1$  \\ \hline
   $Q_t$  &$-\overset{}{\dfrac{679}{24}}$ & $-\dfrac{1027}{32}$ &$-\dfrac{73}2$ &$-\dfrac{77}{4}$  & $-\dfrac{185}{6}$ & $-\dfrac{81}{4}$ & $-19$ & $-\dfrac{91}{4}$ &$-\dfrac{27}{2}$ & $-\dfrac{35}{4}$ & $-7$   \\ [1.5ex] \hline
  \end{tabular}
  
  \vspace{3mm}
  
\hspace{-0.9cm}
   \begin{tabular}{c|ccccccccccc}  \hline
  $3t$  & $1$ & $2$ &$4$& $5$ &$6$ & $7$ & $8$ & $9$ & $10$ & $11$              \\ \hline
   $Q_t$   & $\overset{}{\dfrac{13}{2}}$ & $\dfrac{57}{4}$  &$-\dfrac{111}{8}$& $-\dfrac{49}2$ &$-\dfrac{189}{8}$ & $-\dfrac{79}{4}$  & $-\dfrac{485}{16}$ & $-\dfrac{117}{4}$ & $-\dfrac{973}{40}$ & $-\dfrac{163}{4}$ \\ [1.5ex] \hline
  \end{tabular}
  
    \vspace{3mm}
    
    \begin{tabular}{c|cccccccccc}  \hline
  $3t$   &  $12$ & $13$ & $14$ & $15$ & $16$ & $17$ & $18$ &$19$& $20$  \\ \hline
   $Q_t$  &$-\dfrac{189}{8}$ &$-\overset{}{\dfrac{1085}{36}}$ & $-\dfrac{2717}{72}$ &$-\dfrac{279}8$ &$-\dfrac{4069}{128}$  & $-\dfrac{2443}{60}$ & $-\dfrac{1935}{56}$ & $-\dfrac{427}{12}$ & $-\dfrac{7973}{192}$    \\ [1.5ex] \hline
  \end{tabular}
\end{table}
\end{thm}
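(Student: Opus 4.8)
The plan is to convert Theorem~\ref{thm:main} into an explicit analytic formula for $\frac{1}{2\pi i}r_{\D,\Q}(\xi_{{\rm Hes},t})(\gamma)$, to evaluate it and $L(X_t,2)$ numerically to high precision, and to recognize the ratio $Q_t$ as a rational number.

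First I would compute $r_{\D}(\xi_{{\rm Hes},t})$ by averaging Theorem~\ref{thm:main}(2) over $z\in\mu_3$: since the three elements $\xi(\rho z)_t$ are a permutation of the three $\xi(z\rho)_t$, one has $r_{\D}(\xi_{{\rm Hes},t})=\frac13\sum_{z\in\mu_3}r_{\D}(\xi(\rho z)_t)$. Because each $K_{i,z}(t)$ is a $\Q$-linear combination of $z$ and $z^{2}$ with function coefficients, while $\sum_{z\in\mu_3}z=\sum_{z\in\mu_3}z^{2}=0$, the $K_{i,z}$-terms and the constants $C_1,C_2$ cancel in the average, leaving modulo $\Q(2)$ the simple expressions $r_{\D}(\xi_{{\rm Hes},t})(A)\equiv-(1-\zeta)H_1(t)+(1-\zeta^{2})H_2(t)$ and $r_{\D}(\xi_{{\rm Hes},t})(B)\equiv H_1(t)-H_2(t)$, in agreement with $\xi_{{\rm Hes},t}$ being defined over $\Q$.

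Next I would make the passage from $r_{\D}$ to the real regulator $r_{\D,\Q}$ explicit. Since $3t\in\Z$, the curve $X_t$ is defined over $\Q$ and $X_t(\C)$ carries the real structure given by the complex conjugation $F_{\infty}$; from the model of Section~2 one reads off the action of $F_{\infty}$ on the symplectic basis $\{B,A\}$ of $H_1(X_t(\C),\Z)$, hence writes $\gamma=A-F_{\infty}(A)$ as an explicit integral combination of $A$ and $B$. Pairing the two displayed values of $r_{\D}(\xi_{{\rm Hes},t})$ with $\gamma$ and then applying $r_{\D,\Q}$, which on the $F_{\infty}$-anti-invariant part amounts to taking the imaginary part (Section~4.3), gives $\frac{1}{2\pi i}r_{\D,\Q}(\xi_{{\rm Hes},t})(\gamma)$ as a concrete real-analytic expression in $t$ built from $H_1$ and $H_2$. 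For the $L$-values I would, for each rational $t$ with $3t\in\Z\setminus\{0,3\}$, identify $X_t$ with its minimal Weierstrass model over $\Q$ (matching $j$-invariant and conductor against the elliptic curve tables, or directly via the $3$-torsion structure), and compute $L(X_t,2)$ by the standard rapidly convergent series coming from the approximate functional equation. Forming $Q_t$ and applying a continued-fraction (or lattice-reduction) recognition to the value, computed to more than $15$ digits, then yields the rationals in the table.

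The main obstacle is numerical. For $|3t|>3$ one has $|t^{3}|>1$, so the generalized hypergeometric and Kamp{\'e} de F{\'e}riet series defining $H_1(t)$ and $H_2(t)$ lie outside their discs of convergence and must be analytically continued: either through the classical connection formulas for ${}_{2}F_{1}$ (to which $H_1,H_2$ reduce after one integration), or, more robustly, by integrating the regulator numerically along a path in $\C\setminus\mu_3$ from $t=0$ using the derivative formula of Lemma~\ref{prop:1} together with the hypergeometric periods of Section~2, with initial value supplied by the Fermat-curve computation of Theorem~\ref{thm:Otsubo}. A secondary difficulty is the careful bookkeeping of branches of the multivalued functions and of period normalizations, so that the comparison with $\frac{1}{\pi^{2}}L(X_t,2)$ is exact rather than only up to an unknown scalar. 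Finally, since $\dim H^2_{\mathscr{M}}(X_t,\Q(2))_{\Z}=1$ is not known, the table confirms the Bloch--Beilinson conjecture only under this expectation; what is unconditional is that each $Q_t$ equals the displayed rational number.
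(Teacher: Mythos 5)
Your proposal is correct and follows essentially the paper's route: the $\mu_3$-average of Theorem \ref{thm:main}(ii) kills the $K_{i,z}$-terms and the constants and yields Corollary \ref{maincor}, $\frac{1}{2\pi i}r_{\D,\Q}(\xi_{{\rm Hes},t})(\gamma)=\frac{\sqrt3}{2\pi}\left(H_1(t)+H_2(t)\right)$, which the paper evaluates numerically in Mathematica (using Zudilin's ${}_4F_3(1/t^3)$ formula \eqref{Zudilin} for $t>1$ in place of a generic analytic continuation) and compares with $L(E_t,2)$ computed in Magma from the Joye--Quisquater Weierstrass model. The only quibble is your closing sentence: the computation establishes $Q_t$ to $15$ digits but does not unconditionally prove that $Q_t$ \emph{equals} the displayed rational; that is known rigorously only for the cases recorded in \eqref{value}.
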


The case $t=0$ is excluded from the theorem since $\xi_{{\rm Hes}, 0}$ has the trivial regulator, which follows from Corollary \ref{maincor}.   
Some values of $Q_t$ are proved rigorously by comparison with known results (see \eqref{value}). 
The case $t=2/3$ follows by comparing Corollary \ref{maincor} with Samart's result \cite[Theorem 2]{Samart}. 

We also give examples of the Bloch-Beilinson conjecture over the imaginary quadratic field $\Q(\mu_3)$ for $t=-2$ and $-1/2$ 
($X_{-2}$ has complex multiplication but $X_{-\frac12}$ does not). 
When $t=-1/2$, we construct the integral element $\xi'_{-\frac12}$ (note that $\xi_{{\rm Hes}, -\frac12}$ is not integral) defined over $\Q$ from the $\xi(\rho z)_{-\frac12}$'s, and  verify the conjecture over $\Q$  (Theorem \ref{element2}) numerically. 
The elements $\xi(\zeta)_{-2}$ and $\xi(\zeta)_{-\frac12}$ are defined over $\Q(\mu_3)$ and we will prove that these elements are integral (Theorem \ref{element3}).  
We verify the conjecture 
over $\Q(\mu_3)$ for $t=-2$ (resp. $t=-1/2$) numerically (Theorem \ref{quad}) by computing the regulator determinant of $\{\xi_{{\rm Hes}, -2}, \xi(\zeta)_{-2} \}$ (resp. $\{\xi'_{-\frac12}, \xi(\zeta)_{-\frac12} \}$) using Theorem \ref{thm:main}. 
For $t=0$, we can use $\{\xi(\rho)_0, \xi(\rho\zeta)_{0} \}$ to compute the regulator, and determine rigorously its ratio with the $L$-value by comparison with known results (see Remark \ref{regvalue}).

\section{Periods of the Hesse cubic curve}
Let $f \colon X \rightarrow \mathbb{P}^1$ be the Hesse family of cubic curves over $\C$, whose fiber $X_t=f^{-1}(t)$ over $t \in \mathbb{A}^1$ 
is defined by 
$$x_0^3+y_0^3+z_0^3=3tx_0y_0z_0.$$
This is smooth over $S:=\mathbb{A}^1-\mu_3$, where $\mu_3$ denotes the group of cubic roots of unity.
The affine equation is written as 
$$x^3+y^3+1=3txy \quad (x=x_0/z_0, y=y_0/z_0).$$
Let $S_3$ be the symmetric group of degree 3.
Identifying the letters $1, 2, 3$ with $x_0, y_0, z_0$, $S_3$ acts on $X$ over $\mathbb{P}^1$ by
permutation of variables. 
Let $\mu_3$ act on $X$ over $\mathbb{P}^1$ by 
$$\zeta[x_0:y_0:z_0]=[\zeta x_0:\zeta^2 y_0: z_0] \quad (\zeta \in \mu_3).$$
Then, the group $\mu_3 \rtimes S_3$ acts on $X$ where the action of $S_3$ on $\mu_3$ is given by 
$$\sigma(\zeta)=  \zeta^{{\rm sgn}(\sigma)} \quad ( \sigma \in S_3, \, \zeta \in \mu_3).$$

\begin{dfn}
We define differential forms on $X_t$ ($t \in S(\C)$) by
\begin{align*}
\omega_t&= \dfrac{dy}{x^2-ty}=\frac{-dx}{y^2-tx}, \\
\eta_t&=xy\omega_t=\dfrac{xydy}{x^2-t y}=-\dfrac{xydx}{y^2-t x}.
\end{align*}
Since $\omega_t$ (resp. $\eta_t$) is of the first (resp. second) kind, they define cohomology classes $[\omega_t], [\eta_t]$ in $H^1_{\rm dR}(X_t(\C))$.
\end{dfn}

\begin{ppn} \label{prop:diff}
Let $t \in S(\C)$.
\begin{enumerate}
\item The set $\{[\omega_t], [\eta_t]\}$ is a basis of $H^1_{\rm dR}(X_t(\C))$. 
\item The group $\mu_3 \rtimes S_3$ acts on $H^1_{\rm dR}(X_t(\C))$ via the character
$$\mu_3 \rtimes S_3 \to S_3 \xrightarrow{\rm sgn} \{\pm 1\},$$
where the first arrow is the projection. 
\end{enumerate}
\end{ppn}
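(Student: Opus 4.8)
The plan is to handle the two parts in turn, using throughout that a smooth plane cubic is an elliptic curve, so $H^1_{\rm dR}(X_t(\C))$ is $2$-dimensional and $H^0(X_t,\Omega^1)$ is a line.

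For (1), I would first differentiate $F:=x^3+y^3+1-3txy=0$ to get $(x^2-ty)\,dx=(tx-y^2)\,dy$; this shows the two displayed formulas for $\omega_t$ agree, and since $x^2-ty$ and $y^2-tx$ are, up to the factor $3$, the partials $\partial F/\partial x$ and $\partial F/\partial y$, which never vanish simultaneously at a smooth point, $\omega_t$ is holomorphic and nowhere zero on the affine curve. A one-line local expansion in the coordinate $z_0$ at each of the three points $P_\zeta=[1:-\zeta:0]$ ($\zeta\in\mu_3$) on the line $z_0=0$ shows $\omega_t$ is holomorphic and nonvanishing there as well, so $[\omega_t]$ generates $H^0(X_t,\Omega^1)\subset H^1_{\rm dR}$. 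Next, $\eta_t=xy\,\omega_t$ is regular on the affine curve with a double pole at each $P_\zeta$; I would show its residues vanish — the three residues coincide because $\mu_3$ permutes the $P_\zeta$ cyclically and fixes both $\omega_t$ and $\eta_t$, while the residue at $P_1$ vanishes because the transposition $(x_0\,y_0)$ fixes $P_1$ and sends $\eta_t\mapsto-\eta_t$ — so $\eta_t$ is of the second kind and $[\eta_t]\in H^1_{\rm dR}$. Finally, to see that $\{[\omega_t],[\eta_t]\}$ is a basis I would compute the cup product $[\omega_t]\cup[\eta_t]\in H^2_{\rm dR}(X_t(\C))\cong\C$ via its residue description, a sum over the $P_\zeta$ of local residues built from $\eta_t$ and a local primitive of $\omega_t$; the local expansions above make each summand a nonzero constant (and, by the symmetry just used, all equal), so the sum is nonzero and $[\omega_t],[\eta_t]$ are linearly independent. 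As a cross-check, the nondegeneracy of the period matrix is also visible from the Matsumoto--Terasoma--Yamazaki hypergeometric expressions recalled in Section 2 \cite{kts}.

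For (2), write $G=\mu_3\rtimes S_3$. I would first note that $\zeta\in\mu_3$ acts on the affine curve by $(x,y)\mapsto(\zeta x,\zeta^2y)$, whence $\zeta^*(xy)=xy$ and $\zeta^*\omega_t=\omega_t$; thus $\zeta$ fixes both basis elements, $\mu_3$ acts trivially on $H^1_{\rm dR}$, and the representation factors through $G\to S_3$. A direct pullback computation — a transposition acts by $(x,y)\mapsto(y,x)$ and a $3$-cycle by $(x,y)\mapsto(y/x,1/x)$, in each case combined with the two formulas for $\omega_t$ — gives $g^*\omega_t=\operatorname{sgn}(g)\,\omega_t$ for $g\in S_3$, so $\C[\omega_t]\subset H^1_{\rm dR}$ is a copy of the sign representation. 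To identify the remaining constituent I would use the cup product $H^1_{\rm dR}\times H^1_{\rm dR}\to H^2_{\rm dR}\cong\C$: it is alternating and $G$-invariant, since an automorphism of a curve acts trivially on $H^2_{\rm dR}$, so each $g$ acts on the $2$-dimensional symplectic space $H^1_{\rm dR}$ through $\operatorname{Sp}_2(\C)=\operatorname{SL}_2(\C)$, i.e. $\det(g\mid H^1_{\rm dR})=1$; hence $g$ acts on the quotient $H^1(X_t,\mathcal O_{X_t})$ by $\det(g)/\operatorname{sgn}(g)=\operatorname{sgn}(g)$ as well. By Maschke's theorem the $S_3$-representation splits, so $H^1_{\rm dR}\cong(\operatorname{sgn})^{\oplus2}$, i.e. $G$ acts via $G\to S_3\xrightarrow{\rm sgn}\{\pm1\}$, which is the assertion.

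I expect the local computations in (1) and the pullbacks of $\omega_t$ in (2) to go through without real difficulty. The one point where a naive approach fails is the claim implicit in (2) that a $3$-cycle $g$ satisfies $g^*[\eta_t]=[\eta_t]$ in cohomology: at the level of forms $g^*\eta_t=(y/x^2)\,\omega_t$, which is not proportional to $\eta_t$, so a direct verification would require a Griffiths--Dwork reduction modulo $F$. The determinant argument above is designed precisely to sidestep this, and I do not anticipate any remaining obstacle.
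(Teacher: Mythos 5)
Your proposal is correct. Part (i) follows essentially the same route as the paper: both arguments reduce to showing $[\omega_t]\cup[\eta_t]\neq 0$ via Serre's residue formula, summing local residues of $\eta_t\int\omega_t$ over the three points $[1:-\zeta:0]$ at infinity (the paper gets $-3$); your additional verification that $\eta_t$ is of the second kind, via the $\mu_3$-symmetry of the three residues together with the transposition negating $\eta_t$ at the fixed point $[1:-1:0]$, is a detail the paper simply asserts in its definition of $\eta_t$. Part (ii) is where you genuinely diverge. The paper handles the one nontrivial case, $\rho^*[\eta_t]$ for the $3$-cycle $\rho$, by an explicit computation: $\rho^*\eta_t=x^{-3}\eta_t$ and $d(y^2/x)=\frac{x^3-1}{x^3}\eta_t$ exhibits the difference as an exact form, so $\rho^*[\eta_t]=[\eta_t]$. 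You correctly identify that a naive form-level comparison fails here and instead argue structurally: $g^*$ preserves the cup-product symplectic form because automorphisms act trivially on $H^2_{\rm dR}$, hence $\det(g^*\mid H^1_{\rm dR})=1$; combined with $g^*\omega_t=\operatorname{sgn}(g)\,\omega_t$ on the Hodge line and Maschke semisimplicity, this forces $H^1_{\rm dR}\cong\operatorname{sgn}^{\oplus 2}$. Both arguments are sound; the paper's has the virtue of producing the explicit exact form (which is reused implicitly when one wants statements at the level of differentials rather than classes), while yours is cleaner, avoids the Griffiths--Dwork-style reduction entirely, and generalizes to any finite group acting on a curve once the action on $H^{1,0}$ is known.
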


\begin{proof}
(i)
 We have Serre's formula for the cup product
 $$[\omega_t] \cup [\eta_t]=\frac{1}{2\pi i}
 \int_{X_t(\C)}\omega_t \wedge \eta_t = \sum_{P \in X_t(\C)}{\rm Res}_{P} \left( \eta_t\int \omega_t  \right),$$
 where $\int \omega$ is a primitive function of $\omega$ on a small neighborhood of $P$ and ${\rm Res}_P$ denotes the residue at $P$, which does not depend on the choice of $\int \omega$.
Note that $\omega_t$ is holomorphic and $\eta_t$ is holomorphic except at $R_{\zeta}=[1 :  -\zeta : 0]$ $(\zeta \in \mu_3)$. If we put $u=z_0/x_0$ and $v=y_0/x_0$, then $u$ is a local parameter at $R_{\zeta}$ and we have
\begin{align*}
\int \omega_t&=\int\frac{1}{v^2} \left(1+\frac{tu}{v^2}+ \cdots \right)du
= \dfrac{u}{v^2} +o(u).
\end{align*}
Since
$$\eta_t=\dfrac{v}{u^2} \omega_t=\dfrac{1}{u^2}\cdot \frac1v \left(1+\frac{tu}{v^2}+ \cdots \right)du,$$
it follows that
 $${\rm Res}_{R_{\zeta}}\left( \eta_t \int \omega_t\right)=-1.$$
Therefore we obtain $[\omega_t] \cup [\eta_t]=-3 \neq 0.$
Since $\operatorname{dim} H^1_{\rm dR}(X_t(\C))=2$ and the cup product is non-degenerate,
 the assertion follows. 

(ii)
The group $S_3$ is generated by $\tau=(1\ 2)$ and $\rho=(1\ 2\ 3)$.
It is easy to show that 
$\tau^*(\omega_t)=-\omega_t$, $\tau^*(\eta_t)=-\eta_t$, $\rho^*(\omega_t)=\omega_t$, and 
$\zeta^*(\omega_t)=\omega_t$, $\zeta^*(\eta_t)=\eta_t$
($\zeta \in \mu_3$). 
Since 
$$d\left(\dfrac{y^2}{x}\right)=\dfrac{x^3-1}{x^3}\eta_t,$$
we have $\rho^{\ast}(\eta_t)=\frac{1}{x^3}\eta_t \equiv \eta_t$ modulo exact forms. 
Hence $\rho^{*}([\eta_t])=[\eta_t]$ and the
assertion follows. 
\end{proof}
We recall the construction of the symplectic basis $\{B, A\}$ of $H_1(X_t(\C), \Z)$ according to \cite{kts}.
First, we construct a cycle on $X_{\frac1{\sqrt[3]{2}}}$, and for general $t \in S(\C)$, 
the cycle is defined by continuation along a path from $1/{\sqrt[3]{2}}$ to $t$.
Note that the cycles depend on the choice of a path. 
When $t=1/\sqrt[3]{2}$, 
 let $\lambda \colon [0, 1] \rightarrow X_t(\C)$
 be the path from $P:=[0:-1:1]$ to $\rho^2(P)=[-1: 0:1]$ such that the $y$-coordinate of $\lambda(s)$ is $s-1$ (then this path is contained in $X_{\frac1{\sqrt[3]{2}}}(\R)$).
Then
$$B:=(1+\rho+\rho^2)(\lambda)$$
becomes a cycle.
To define the cycle $A$, we fix $\zeta=e^{2\pi i /3}$.
Let $\lambda'$ (written $\lambda_{03}$ in \cite{kts}) be a path from $P$ to $\zeta(P)=[0:-\zeta^2:1]$ whose $y$-coordinate 
draws two line segments passing through $\zeta-1$.
Then 
$$A:=(1+\zeta+\zeta^2)(\lambda')$$
becomes a cycle.

\begin{thm}[Matsumoto-Terasoma-Yamazaki] \label{thm:1}
Put $B_s=B(s, s)$ where $B(s, t)$ is the Beta function. 
Then, as multi-valued functions on $ \C\backslash \mu_3$, 
\begin{align*}
\int_A\omega_t&=-(1-\zeta)B_{\frac13}
{_{2}F_{1}}\left[ \left. 
\begin{matrix}
\frac13,\frac13 \\
\frac{2}{3}
\end{matrix}
\right| t^3
\right]
+(1-\zeta^2)
B_{\frac23}
t{_{2}F_{1}}\left[ \left. 
\begin{matrix}
\frac23,\frac23 \\
\frac{4}{3}
\end{matrix}
\right| t^3
\right], \\
\int_B\omega_t&=
B_{\frac13}
{_{2}F_{1}}\left[ \left. 
\begin{matrix}
\frac13,\frac13 \\
\frac{2}{3}
\end{matrix}
\right| t^3
\right]
-
B_{\frac23}t
{_{2}F_{1}}\left[ \left. 
\begin{matrix}
\frac23,\frac23 \\
\frac{4}{3}
\end{matrix}
\right| t^3
\right],\\
\intertext{and}
\int_A \eta_t &= -(1-\zeta^2)
B_{\frac23}
{_{2}F_{1}}\left[ \left. 
\begin{matrix}
-\frac13,\frac23 \\
\frac{1}{3}
\end{matrix}
\right| t^3
\right]- \dfrac{1-\zeta}{2}
B_{\frac13}t^2
{_{2}F_{1}}\left[ \left. 
\begin{matrix}
\frac13,\frac43 \\
\frac{5}{3}
\end{matrix}
\right| t^3
\right], \\
\int_B \eta_t &=
B_{\frac23}
{_{2}F_{1}}\left[ \left. 
\begin{matrix}
-\frac13,\frac23 \\
\frac{1}{3}
\end{matrix}
\right| t^3
\right] + \dfrac{1}{2}
B_{\frac13}
 t^2{_{2}F_{1}}\left[ \left. 
\begin{matrix}
\frac13,\frac43 \\
\frac{5}{3}
\end{matrix}
\right| t^3
\right].
\end{align*}
\end{thm}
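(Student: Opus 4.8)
The plan is to regard the four periods as multivalued holomorphic functions of $t\in\C\setminus\mu_3$ and to determine them in three stages: fix the shape of the answer via the Picard--Fuchs equation, pin down the constants by degenerating to the Fermat fibre $X_0$, and recover the $\eta_t$-periods from the $\omega_t$-periods through the Gauss--Manin connection. Throughout I use Proposition~\ref{prop:diff} together with the invariances $\rho^*\omega_t=\omega_t$, $\zeta^*\omega_t=\omega_t$ and $\rho^*\eta_t\equiv\eta_t$, $\zeta^*\eta_t=\eta_t$ established in its proof; these reduce the periods over $B=(1+\rho+\rho^2)(\lambda)$ and $A=(1+\zeta+\zeta^2)(\lambda')$ to path integrals over $\lambda$ and $\lambda'$, exactly for $\omega_t$ and up to explicit boundary terms for $\eta_t$.

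For the first stage I would compute the Gauss--Manin connection of the family in the basis $\{[\omega_t],[\eta_t]\}$. Differentiating $\omega_t=dy/(x^2-ty)$ with respect to $t$, treating $x$ as an implicit function of $(y,t)$ through $x^3+y^3+1=3txy$, gives a rational $1$-form whose pole order one reduces modulo exact forms to obtain $\nabla_{d/dt}[\omega_t]=\alpha(t)[\omega_t]+\beta(t)[\eta_t]$, and likewise $\nabla_{d/dt}[\eta_t]=\gamma(t)[\omega_t]+\delta(t)[\eta_t]$, with $\alpha,\beta,\gamma,\delta$ rational in $t$ and denominators that are powers of $1-t^3$; since the family is not isotrivial, $\beta\not\equiv0$. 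Eliminating from this $2\times 2$ system, every period $\varpi_\gamma(t)=\int_\gamma\omega_t$ satisfies a second order linear ODE which, under $u=t^3$, becomes the Gauss hypergeometric equation with parameters $(\tfrac13,\tfrac13;\tfrac23)$. The local exponents at $t=0$ being $0$ and $1$, one gets $\varpi_\gamma(t)=a_\gamma\,{}_2F_1(\tfrac13,\tfrac13;\tfrac23;t^3)+b_\gamma\,t\,{}_2F_1(\tfrac23,\tfrac23;\tfrac43;t^3)$ near $t=0$ for constants $a_\gamma,b_\gamma$, hence on all of $\C\setminus\mu_3$ by analytic continuation; this is the functional form asserted for $\int_A\omega_t$ and $\int_B\omega_t$.

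To identify $a_\gamma=\varpi_\gamma(0)$ and $b_\gamma=\varpi_\gamma'(0)$ I would specialize to the Fermat cubic $X_0\colon x^3+y^3+1=0$. Along $\lambda$ one has $x=-(y^3+1)^{1/3}$ with the real cube root, so $s=-y^3$ turns $\int_\lambda\omega_0=\int_{-1}^0 dy/(y^3+1)^{2/3}$ into $\tfrac13\int_0^1 s^{-2/3}(1-s)^{-2/3}\,ds=\tfrac13 B_{\frac13}$, whence $a_B=B_{\frac13}$; along the two-segment path $\lambda'$ in the $y$-line through $\zeta-1$, meeting complex points of $X_0$, the same substitution produces $\tfrac13 B_{\frac13}$ and $\tfrac13\int_0^1 s^{-1/3}(1-s)^{-1/3}\,ds=\tfrac13 B_{\frac23}$ weighted by cube roots of unity coming from the branch of $(y^3+1)^{1/3}$, giving $a_A=-(1-\zeta)B_{\frac13}$. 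For $b_\gamma$ it is cleanest to use $\varpi_\gamma'(0)=\alpha(0)\varpi_\gamma(0)+\beta(0)\int_\gamma\eta_0$ from the Gauss--Manin connection (legitimate because $\gamma$ is a cycle) and to evaluate the Fermat periods $\int_\gamma\eta_0$ by the analogous Beta integrals with $\eta_0=y\,dy/x$, keeping track of the boundary corrections and of the poles of $\eta_0$; this yields $b_B=-B_{\frac23}$ and $b_A=(1-\zeta^2)B_{\frac23}$, completing the formulas for $\int_A\omega_t$ and $\int_B\omega_t$. I expect this stage to be the main obstacle: because $A$ and $B$ are defined by analytic continuation from the fibre $t=1/\sqrt[3]{2}$, recovering not just the absolute values but the precise factors $1-\zeta$ and $1-\zeta^2$ demands careful bookkeeping of the branches of $(y^3+1)^{1/3}$ along $\lambda'$.

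Finally, since $\beta\not\equiv0$, the relation $\nabla_{d/dt}[\omega_t]=\alpha(t)[\omega_t]+\beta(t)[\eta_t]$ gives $\int_\gamma\eta_t=\beta(t)^{-1}\big(\tfrac{d}{dt}\int_\gamma\omega_t-\alpha(t)\int_\gamma\omega_t\big)$. Substituting the closed forms already found for $\int_\gamma\omega_t$, differentiating the hypergeometric series term by term, and simplifying with the standard contiguous and transformation relations for ${}_2F_1$, one rewrites the right-hand side as a combination of ${}_2F_1(-\tfrac13,\tfrac23;\tfrac13;t^3)$ and $t^2\,{}_2F_1(\tfrac13,\tfrac43;\tfrac53;t^3)$; matching the value at $t=0$ (already computed as a Fermat period) fixes the coefficients $B_{\frac23},\tfrac12 B_{\frac13}$ for the $B$-cycle and $-(1-\zeta^2)B_{\frac23},-\tfrac{1-\zeta}{2}B_{\frac13}$ for the $A$-cycle, which completes the proof. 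A secondary technical point throughout is that $\eta_t$ is only of the second kind, with poles at $R_\zeta=[1:-\zeta:0]$, some of which lie on $\rho$- or $\zeta$-translates of $\lambda$ and $\lambda'$; one must therefore perturb the relevant cycles, or argue consistently modulo exact forms as in the proof of Proposition~\ref{prop:diff}, when manipulating the $\eta_t$-integrals.
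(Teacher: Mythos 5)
The paper itself does not prove this theorem: it cites \cite[Theorem~1]{kts} for the two $\omega_t$-periods and states that the $\eta_t$-periods ``can be proved similarly.'' Your proposal is therefore a genuinely different, self-contained route (Picard--Fuchs equation to fix the functional form, degeneration to $t=0$ to fix constants, Gauss--Manin to pass from $\omega_t$ to $\eta_t$), and its overall architecture is sound. Two consistency checks in its favour: the exponents $0$ and $1$ at $t=0$ do force $\varpi_\gamma(t)=a_\gamma\,{}_2F_1(\frac13,\frac13;\frac23;t^3)+b_\gamma\,t\,{}_2F_1(\frac23,\frac23;\frac43;t^3)$ with no logarithm (the fibre at $t=0$ is smooth), and your relation $b_\gamma=-\int_\gamma\eta_0$ is consistent with the connection matrix recorded in Corollary~\ref{GM} (where $\alpha(0)=0$, $\beta(0)=-1$).

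There are, however, three places where the argument as written is not yet a proof. First, and most importantly, the determination of $a_A=-(1-\zeta)B_{\frac13}$ --- which together with $b_A$ is the actual content of the theorem, since the precise factors $1-\zeta$ and $1-\zeta^2$ are what make Corollary~\ref{HGR} and everything downstream work --- is only asserted. Your intermediate sentence is moreover confused: evaluating $\int_{\lambda'}\omega_0$ with $\omega_0=dy/(1+y^3)^{2/3}$ can only ever produce the integral $\int_0^1 s^{-2/3}(1-s)^{-2/3}\,ds=B_{\frac13}$ on each segment; the integral $B_{\frac23}$ you mention belongs to $\eta_0$, not to $\omega_0$, and cannot appear in $a_A$ (indeed the $B_{\frac23}$ term of $\int_A\omega_t$ carries a factor $t$ and vanishes at $t=0$). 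What actually has to be done is to follow the branch of $(1+y^3)^{1/3}$ along the two segments of $\lambda'$ through $\zeta-1$ and sum the resulting cube-root-of-unity weights; you correctly identify this as the main obstacle but do not carry it out. Second, you cannot invoke Corollary~\ref{GM} for the connection matrix, since the paper derives that corollary \emph{from} the present theorem; your plan to compute $\nabla$ directly by differentiating $\omega_t$ under the constraint $x^3+y^3+1=3txy$ and reducing modulo exact forms avoids the circularity, but that reduction is itself a nontrivial computation that is only named, not performed. Third, the passage from $\int_\lambda\eta_t$ to $\int_B\eta_t$ is not simply multiplication by $3$: since $\rho^*\eta_t=x^{-3}\eta_t=\eta_t-d(y^2/x)$, boundary terms appear, and the naive primitive $y^2/x$ has a pole at the endpoint $P=[0:-1:1]$ of $\lambda$, so the regularization (perturbing the path, or choosing a primitive adapted to the endpoints) must be made explicit rather than waved at. None of these is a wrong idea, but each is a step where the constants could come out wrong, and the theorem is precisely a statement about those constants.
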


\begin{proof}
The first two formulas are proved in \cite[Theorem 1]{kts}, and 
the others can be proved similarly.  
\end{proof}

As an interesting corollary, we obtain a (probably new) relation among hypergeometric functions.
\begin{cor} \label{HGR}
\begin{align*}
{_{2}F_{1}}\left[ \left. 
\begin{matrix}
\frac13, \frac13 \\
\frac23
\end{matrix}
\right| x
\right]
{_{2}F_{1}}\left[ \left. 
\begin{matrix}
-\frac13, \frac23 \\
\frac13
\end{matrix}
\right| x
\right]
+ 
\frac{x}{2}
{_{2}F_{1}}\left[ \left. 
\begin{matrix}
\frac23, \frac23 \\
\frac43
\end{matrix}
\right| x
\right]
{_{2}F_{1}}\left[ \left. 
\begin{matrix}
\frac13, \frac43 \\
\frac53
\end{matrix}
\right| x
\right]
=1
\end{align*}

\end{cor}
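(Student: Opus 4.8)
The plan is to deduce Corollary \ref{HGR} from Theorem \ref{thm:1} together with Riemann's bilinear relation for the intersection pairing on $H_1(X_t(\C),\Z)$. Recall that $\{B,A\}$ is a symplectic basis, so the intersection number satisfies $B\cdot A=1$ (with the appropriate sign convention), and Riemann's relation for the period pairing of the two de Rham classes $[\omega_t]$ and $[\eta_t]$ reads
\begin{equation*}
\left(\int_B\omega_t\right)\left(\int_A\eta_t\right)-\left(\int_A\omega_t\right)\left(\int_B\eta_t\right)
=\pm\,[\omega_t]\cup[\eta_t].
\end{equation*}
By Proposition \ref{prop:diff}(i) the right-hand side equals $-3$ (up to a fixed normalization constant coming from the $2\pi i$ in Serre's formula), so the left-hand side is a nonzero constant independent of $t$.

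Next I would substitute the four explicit hypergeometric expressions from Theorem \ref{thm:1} into this bilinear combination. Writing temporarily
\begin{align*}
P&={_{2}F_{1}}\!\left[\tfrac13,\tfrac13;\tfrac23\,\middle|\,t^3\right], &
Q&=t\,{_{2}F_{1}}\!\left[\tfrac23,\tfrac23;\tfrac43\,\middle|\,t^3\right], \\
R&={_{2}F_{1}}\!\left[-\tfrac13,\tfrac23;\tfrac13\,\middle|\,t^3\right], &
S&=t^2\,{_{2}F_{1}}\!\left[\tfrac13,\tfrac43;\tfrac53\,\middle|\,t^3\right],
\end{align*}
one has $\int_B\omega_t=B_{\frac13}P-B_{\frac23}Q$, $\int_A\omega_t=-(1-\zeta)B_{\frac13}P+(1-\zeta^2)B_{\frac23}Q$, $\int_B\eta_t=B_{\frac23}R+\tfrac12 B_{\frac13}S$, and $\int_A\eta_t=-(1-\zeta^2)B_{\frac23}R-\tfrac12(1-\zeta)B_{\frac13}S$. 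Expanding the determinant $\bigl(\int_B\omega_t\bigr)\bigl(\int_A\eta_t\bigr)-\bigl(\int_A\omega_t\bigr)\bigl(\int_B\eta_t\bigr)$, the cross terms $PR$ and $QS$ acquire coefficients that are multiples of $(1-\zeta)-(1-\zeta)=0$ and likewise for the other mixed pairing, so they cancel; what survives is a combination of $B_{\frac13}^2\,PS$ and $B_{\frac23}^2\,QR$ (or rather $B_{\frac13}B_{\frac23}$ times $PR$ and $QS$ — the precise bookkeeping of which monomials survive is exactly the computation to be carried out) with $\zeta$-dependent coefficients. Using $1+\zeta+\zeta^2=0$ and the beta-function identity $B_{\frac13}B_{\frac23}=\Gamma(\tfrac13)^2\Gamma(\tfrac23)^2/(\Gamma(1)\Gamma(\tfrac43)\Gamma(\tfrac53))$ together with $\Gamma(\tfrac13)\Gamma(\tfrac23)=2\pi/\sqrt3$, one simplifies the surviving coefficients to pure rational numbers, and the whole expression collapses to a constant multiple of $PR+\tfrac12 t^3\cdot\bigl(t^{-1}Q\bigr)\bigl(t^{-2}S\bigr)\cdot t^3$; matching with the claimed left-hand side $PR+\tfrac{x}{2}\,{_{2}F_{1}}[\tfrac23,\tfrac23;\tfrac43|x]\,{_{2}F_{1}}[\tfrac13,\tfrac43;\tfrac53|x]$ with $x=t^3$ identifies it, and evaluating at $t=0$ (where $P=R=1$, $Q=S=0$) pins the constant to $1$.

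I expect the main obstacle to be purely organizational rather than conceptual: keeping track of the correct signs and the normalization constant in Riemann's relation (the factor $2\pi i$ in Serre's formula, the sign of $B\cdot A$, and the orientation conventions implicit in \cite{kts}), and verifying that the four $\zeta$-dependent coefficients in front of the monomials $PR$, $QS$, $PS$, $QR$ collapse exactly as needed using $1+\zeta+\zeta^2=0$. A clean way to sidestep the constant-chasing is to argue structurally first: the determinant of periods is, by general theory, a nonzero constant times the de Rham cup product $[\omega_t]\cup[\eta_t]$, hence constant in $t$; then one only needs to (a) expand and show the result is a constant times the left-hand side of the corollary, and (b) evaluate both sides at $t=0$ to fix that constant as $1$. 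Step (b) is immediate, so the entire content reduces to the algebraic simplification in step (a), which is a finite and mechanical computation with the Pochhammer coefficients of the two ${}_2F_1$'s.
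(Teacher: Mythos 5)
Your proposal is correct and follows essentially the same route as the paper: the paper's proof is exactly Riemann's bilinear relation $\int_{B}\omega_t\int_{A}\eta_t-\int_{A}\omega_t\int_{B}\eta_t=\int_{X_t(\C)}\omega_t\wedge\eta_t=-6\pi i$ (from the cup-product computation in Proposition \ref{prop:diff}(i)), followed by substitution of Theorem \ref{thm:1} and the identity $B_{\frac13}B_{\frac23}=2\sqrt{3}\pi$. Your only slip is in the middle bookkeeping (it is the $PS$ and $QR$ terms whose coefficients cancel, while $PR$ and $QS$ survive with common coefficient $(\zeta^2-\zeta)B_{\frac13}B_{\frac23}=-6\pi i$), but you flag this yourself and your final matching with the left-hand side of the corollary is the correct one.
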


\begin{proof}
 By the proof of Proposition \ref{prop:diff} (i) and the period relation,
\begin{align*} 
-6\pi i=\int_{X_t(\C)} \omega_t \wedge \eta_t
=\int_{B} \omega_t \int_{A}\eta_t-\int_{A} \omega_t\int_{B}\eta_t.
\end{align*}
Then the corollary follows from Theorem \ref{thm:1} and that $B_{\frac13} B_{\frac23}=2\sqrt{3} \pi$.
\end{proof}

It might be worthwhile to write down the Gauss-Manin connection. 
\begin{cor}  \label{GM}
Put $U= f^{-1}(S)$ and 
let $\nabla \colon  \mathscr{H}^1_{\rm dR} (U/S ) \rightarrow \Omega^1_{S/{\C}} \otimes_{\mathscr{O}_S} \mathscr{H}^1_{\rm dR}(U/S)$ be the Gauss-Manin connection, where $\mathscr{H}^1_{\rm dR}(U/S)$ denotes the sheaf of relative de Rham cohomology.   
Then
$$
\nabla
\begin{pmatrix}
[\omega_t]& [\eta_t]
\end{pmatrix}
=\dfrac{dt}{1-t^3}
\otimes 
\begin{pmatrix}
[\omega_t] & [\eta_t]
\end{pmatrix}
\begin{pmatrix}
t^2 & t \\
-1& -t^2
\end{pmatrix}.
$$
\end{cor}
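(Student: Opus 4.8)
The plan is to extract the Gauss--Manin connection matrix directly from the explicit hypergeometric period formulas in Theorem~\ref{thm:1}, rather than computing it intrinsically from the defining equation. Write $P(t)$ for the $2\times 2$ period matrix whose columns are $\begin{pmatrix}\int_B\omega_t & \int_B\eta_t\\ \int_A\omega_t & \int_A\eta_t\end{pmatrix}$ (with rows indexed by the symplectic cycles $B,A$ and columns by $[\omega_t],[\eta_t]$). Since these cycles are flat sections of the local system $R^1f_*\Z$, the Gauss--Manin connection in the basis $\{[\omega_t],[\eta_t]\}$ is given by $\nabla = d + \Gamma(t)\,dt$ where the connection matrix is $\Gamma(t) = -\,(dP/dt)\,P^{-1}$, or in the row-vector convention used in the statement, $\nabla\begin{pmatrix}[\omega_t]&[\eta_t]\end{pmatrix} = dt\otimes \begin{pmatrix}[\omega_t]&[\eta_t]\end{pmatrix}M(t)$ with $M(t) = P(t)^{-1}\,dP(t)/dt$. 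So the claim reduces to checking that $P(t)^{-1}P'(t) = \frac{1}{1-t^3}\begin{pmatrix} t^2 & t\\ -1 & -t^2\end{pmatrix}$.

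First I would record $\det P(t)$: by the computation in the proof of Corollary~\ref{HGR} (the period pairing $\int_B\omega\int_A\eta - \int_A\omega\int_B\eta = -6\pi i$), $\det P(t)$ is the constant $-6\pi i$ — actually one should double-check the sign and which $2\times2$ minor appears, but in any case $\det P$ is a nonzero constant, which already forces $\operatorname{tr} M(t)=0$, consistent with the proposed matrix having zero trace. Next I would differentiate the four ${}_2F_1(t^3)$ expressions in Theorem~\ref{thm:1} term by term, using the contiguous-function / derivative identities for Gauss's hypergeometric function (e.g. $\frac{d}{dx}{}_2F_1(a,b;c;x) = \frac{ab}{c}{}_2F_1(a+1,b+1;c+1;x)$ together with the standard three-term contiguous relations) to re-express each derivative $\partial_t\int_\gamma\omega_t$, $\partial_t\int_\gamma\eta_t$ back in terms of the same four basic hypergeometric functions that appear in $\int_\gamma\omega_t$ and $\int_\gamma\eta_t$. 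Collecting coefficients, this should exhibit $P'(t)$ as $P(t)$ times the claimed matrix, divided by $1-t^3$; the factor $1-t^3$ is exactly what arises from the chain rule $\partial_t(t^3) = 3t^2$ combined with the linear relations among contiguous ${}_2F_1$'s near the singular fibre.

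Alternatively — and this is probably the cleaner writeup — I would avoid hypergeometric bookkeeping entirely and derive the connection matrix from the Griffiths--Dwork method applied to the plane cubic $x_0^3+y_0^3+z_0^3 - 3tx_0y_0z_0 = 0$: compute $\partial_t\omega_t$ and $\partial_t\eta_t$ as residues, reduce them modulo exact forms using the Jacobian ideal of the defining polynomial, and read off the coefficients. The relation $d(y^2/x) = \frac{x^3-1}{x^3}\eta_t$ used in the proof of Proposition~\ref{prop:diff}(ii) is already an instance of such a reduction, and a similar short computation handles $\partial_t\eta_t \equiv \cdots$; the entries $t^2, t, -1, -t^2$ and the denominator $1-t^3$ then appear directly from clearing the discriminant. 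I expect the main obstacle to be purely organizational: keeping the sign and transpose conventions straight (row-vector vs.\ column-vector action of $\nabla$, and which differential pairs with which cycle) so that the final matrix matches the stated one exactly, since an error there would silently flip a sign or transpose the $2\times2$ matrix. Given that, I would simply say the result follows from Theorem~\ref{thm:1} by term-wise differentiation, or from a direct Griffiths--Dwork reduction, and leave the one-line verification to the reader.
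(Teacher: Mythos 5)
Your first approach is exactly the paper's: it reduces the connection matrix to the system of ODEs $P'=PM$ satisfied by the periods over flat cycles, and verifies this by term-wise differentiation of the ${}_2F_1(t^3)$'s in Theorem~\ref{thm:1} combined with contiguity relations. The only difference is cosmetic: the paper replaces $\{A,B\}$ by the basis $\gamma=A+(1-\zeta^2)B$, $\delta=A+(1-\zeta)B$, for which each of $\int\omega_t,\int\eta_t$ is a \emph{single} hypergeometric function (one of the two terms in Theorem~\ref{thm:1} cancels), so the contiguity bookkeeping is done one function at a time rather than on linear combinations.
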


\begin{proof}
It amounts to show 
$$
\dfrac{d}{dt}
\begin{pmatrix}
\int _{\kappa}\omega_t& \int_{\kappa}\eta_t
\end{pmatrix}
=\dfrac{1}{1-t^3}
\begin{pmatrix}
\int_{\kappa}\omega_t & \int_{\kappa} \eta_t
\end{pmatrix}
\begin{pmatrix}
t^2 & t \\
-1& -t^2
\end{pmatrix}
$$
for any $\kappa \in H_1(U/S, \C)$.
It suffices to show this for $\kappa=\gamma, \delta$ where we put
$\gamma= A+(1-\zeta^2)B$ and $\delta=A+(1-\zeta)B$.
For $\kappa=\gamma$, 
\begin{align*}
\int_{\gamma} \omega_t = (\zeta-\zeta^2)  B_{\frac23} t{_{2}F_{1}}\left[ \left. 
\begin{matrix}
\frac23, \frac23 \\
\frac43
\end{matrix}
\right| t^3
\right], \quad
\int_{\gamma} \eta_t = -(\zeta-\zeta^2)  B_{\frac23} {_{2}F_{1}}\left[ \left. 
\begin{matrix}
-\frac13, \frac23 \\
\frac13
\end{matrix}
\right| t^3
\right]
\end{align*}
by Theorem \ref{thm:1}.
By a formal computation, 
\begin{align*}
\dfrac{d}{dt}\int_{\gamma} \omega_t= (\zeta-\zeta^2)  B_{\frac23} {_{2}F_{1}}\left[ \left. 
\begin{matrix}
\frac23, \frac23 \\
\frac13
\end{matrix}
\right| t^3
\right], \quad
\dfrac{d}{dt}\int_{\gamma} \eta_t = 2(\zeta-\zeta^2)  B_{\frac23} t^2{_{2}F_{1}}\left[ \left. 
\begin{matrix}
\frac23, \frac53 \\
\frac43
\end{matrix}
\right| t^3
\right]. 
\end{align*}
Also, we have contiguity relations of hypergeometric functions (cf. \cite[1.4]{Slater})
\begin{align*}
&\left(t\dfrac{d}{dt} + 2\right)
{_{2}F_{1}}\left[ \left. 
\begin{matrix}
\frac23, \frac23 \\
\frac43
\end{matrix}
\right| t^3
\right]=
2{_{2}F_{1}}\left[ \left. 
\begin{matrix}
\frac23, \frac53 \\
\frac43
\end{matrix}
\right| t^3
\right], \\
&\left(t\dfrac{d}{dt} -1\right)
{_{2}F_{1}}\left[ \left. 
\begin{matrix}
-\frac13, \frac23 \\
\frac13
\end{matrix}
\right| t^3
\right]=
-{_{2}F_{1}}\left[ \left. 
\begin{matrix}
\frac23, \frac23 \\
\frac13
\end{matrix}
\right| t^3
\right].
\end{align*}
Hence we obtain
\begin{align*}
\dfrac{d}{dt}\int_{\gamma} \omega_t=
t \dfrac{d}{dt} \int_{\gamma} \eta_t -\int_{\gamma} \eta_t, \,\,\,
\dfrac{d}{dt}\int_{\gamma} \eta_t =
t^2 \dfrac{d}{dt} \int_{\gamma} \omega_t+t\int_{\gamma} \omega_t.
\end{align*}
The case $\kappa=\delta$ is similar
and the result follows.
\end{proof}

\section{Construction of elements in motivic cohomology}
\subsection{Motivic cohomology group}
Let $X$ be a smooth connected quasi-projective variety of dimension $d$ over a field $k$. Let $\mathscr{K}_i$ denote the Zariski sheaf on $X$ associated to the presheaf
$$U \to K_{i}(U)$$
where $K_i(U)$ is the algebraic $K$-group of $U$ defined by Quillen. 
The Zariski cohomology groups $H^j_{\rm Zar}(X, \mathscr{K}_i)$ are called the $K$-cohomology groups. 
Let $k(X)$ be the function field of $X$. For a point $x \in X$, let $k(x)$ be the residue field at $x$ and $i_{x} \colon \{x\} \rightarrow X$ be 
the inclusion.
 Let $X^{(i)}$ be the set of points on $X$ of codimension $i$. 
Then, we have the flasque resolution 
$$0 \rightarrow \mathscr{K}_i \rightarrow K_i(k(X)) \rightarrow \bigoplus_{x \in X^{(1)}} i_{x\ast}K_{i-1}(k(x)) \rightarrow \cdots \rightarrow 
\bigoplus_{x \in X^{(d)}} i_{x\ast}K_{i-d}(k(x))\rightarrow 0.$$
Therefore, we have an isomorphism 
$$H_{\rm Zar}^j(X, \mathscr{K}_i) \cong
 \dfrac{{\rm Ker}(\oplus_{x \in X^{(j)}}K_{i-j}(k(x)) \rightarrow \oplus_{x \in X^{(j+1)}}K_{i-j-1}(k(x)))}{{\rm Im}(\oplus_{x\in X^{(j-1)}}K_{i-j+1}(k(x)) \rightarrow \oplus_{x\in X^{(j)}}K_{i-j}(k(x)))}.$$
In particular, $H^0_{\rm Zar}(X, \mathscr{K}_2)=\Gamma(X, \mathscr{K}_2)$ is identified with the 
kernel of the product map of the tame symbols 
$$T=(T_x)_{x \in X^{(1)}} \colon K_2^{M}\left(k(X)\right) \rightarrow \bigoplus_{x \in X^{(1)}} k(x)^{*}, $$
whose image lies in the direct sum. 
Here, the Milnor $K$-group $K_2^{M}\left(k(X)\right)$ is the abelian group defined by 
$$K_2^{M}\left(k(X)\right)= k(X)^{\ast} \otimes_{\Z} k(X)^{\ast} /\langle f \otimes (1-f), f \neq 0, 1 \rangle.$$
We denote the class of $f \otimes g$ by the symbol $\{f, g\}$. 
Then, $T_x$ is defined by 
$$T_x(\{f, g\})= (-1)^{{\rm ord}_x(f){{\rm ord}_x(g)}}\left(\dfrac{f^{{\rm ord}_x(g)}}{g^{{\rm ord}_x(f)}}\right)(x).$$

\begin{dfn}
We define the motivic cohomology group by
$$H^2_{\mathscr{M}}(X, \Q(2)) =\Gamma(X, \mathscr{K}_2) \otimes \Q ={\rm Ker} \,T \otimes \Q.$$
\end{dfn}

\begin{rmk}
In general, motivic cohomology groups
are defined by
$$H^n_{\mathscr{M}}(X, \Q(r)):=K^{(r)}_{2r-n}(X)_{\Q}, $$
where $(r)$ denotes the Adams eigenspace of weight $r$ \cite{Soule}. 
This agrees with our definition \cite[7.5]{Soule}.
\end{rmk}

\subsection{Construction of elements in motivic cohomology}
Dokchitser, de Jeu and Zagier  \cite{Zagier} give a method of constructing elements in the motivic cohomology 
of curves.
We briefly recall their method and  
apply to the Hesse cubic curve. 

\begin{ppn}[{\cite[Proposition 4.3]{Zagier}}] \label{const: symbol}
Let $C$ be a curve over a field $k$.
Assume that $P_1, P_2, P_3 \in C(k)$ are distinct points whose pairwise differences are torsion divisors, which means that there are rational functions $f_i$ with
$$\operatorname{div} (f_i)=m_i((P_{i+1}) - (P_{i-1})), \quad i\in \Z/3\Z,$$
 where $m_i$ is the order of $(P_{i+1}) - (P_{i-1})$ in the divisor class group $\operatorname{Pic}^0(C)$. 
\begin{enumerate}
\item There exists an element $\{P_1, P_2, P_3\} \in H^2_{\mathscr{M}}(C, \Q(2))$ such that 
$$\dfrac{\operatorname{lcm}\{m_1, m_2, m_3\}}{m_i} \{P_1, P_2, P_3\}= \left\{\dfrac{f_{i+1}}{f_{i+1}(P_{i+1})}, \dfrac{f_{i-1}}{f_{i-1}(P_{i-1})}\right\}, \quad i\in \Z/3\Z.$$
\item A permutation on the points acts on $ \{P_1, P_2, P_3\}$ as multiplication by the signature. 
\end{enumerate}
\end{ppn}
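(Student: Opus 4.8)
The plan is to build the element explicitly as a rational multiple of a Steinberg-type symbol in $K_2^M(k(C))$ and then to check both assertions by bookkeeping with divisors and tame symbols. Passing if necessary to the smooth completion, I may assume $C$ is smooth and projective. Set $L=\operatorname{lcm}\{m_1,m_2,m_3\}$ and $a_i=L/m_i$. Since $P_i$ lies outside the support of $\operatorname{div}(f_i)=m_i((P_{i+1})-(P_{i-1}))$, the function $\hat f_i:=f_i/f_i(P_i)$ is well defined, independent of the scaling of $f_i$, and satisfies $\hat f_i(P_i)=1$; then the right-hand side of the displayed identity is exactly $\xi_i:=\{\hat f_{i+1},\hat f_{i-1}\}\in K_2^M(k(C))$. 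The structural fact I will keep using is that $\sum_i a_i\operatorname{div}(f_i)=L\sum_i((P_{i+1})-(P_{i-1}))=0$, so that $c:=\prod_i\hat f_i^{\,a_i}$ is a nonzero constant of $k$.

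First I would show that each $\xi_i$ lies in $\ker T$, hence defines a class in $\Gamma(C,\mathscr{K}_2)$. The divisors of $\hat f_{i+1}$ and $\hat f_{i-1}$ are supported on $\{P_{i-1},P_i\}$ and $\{P_i,P_{i+1}\}$, so $T_x(\xi_i)=1$ for every closed point $x\notin\{P_{i-1},P_i,P_{i+1}\}$. At $P_{i-1}$ the factor $\hat f_{i-1}$ is a unit equal to $1$, so $T_{P_{i-1}}(\xi_i)=\hat f_{i-1}(P_{i-1})^{-m_{i+1}}=1$, and symmetrically $T_{P_{i+1}}(\xi_i)=1$; this is exactly the purpose of the normalizations by $f_{i\pm1}(P_{i\pm1})$. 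Only $T_{P_i}(\xi_i)$ remains, and since all the points in question are $k$-rational, Weil reciprocity $\prod_x T_x(\xi_i)=1$ forces $T_{P_i}(\xi_i)=1$ as well.

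Next I would prove that $\xi_i/a_i$ does not depend on $i$; one then \emph{defines} $\{P_1,P_2,P_3\}$ to be this common class, which lies in $H^2_{\mathscr{M}}(C,\Q(2))=\ker T\otimes\Q$, and part (i) follows at once since $L/m_i=a_i$ gives $\tfrac{L}{m_i}\cdot\tfrac{\xi_i}{a_i}=\xi_i$. Expanding $\{c,\hat f_i\}=\{\prod_j\hat f_j^{\,a_j},\hat f_i\}$ by bilinearity and using $\{\hat f_i,\hat f_i\}=\{\hat f_i,-1\}$ yields an identity of the shape $a_{i-1}\xi_{i+1}-a_{i+1}\xi_{i-1}=\{(-1)^{a_i}c,\hat f_i\}$. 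Its left-hand side lies in $\ker T$ by the previous step, hence so does the constant symbol on the right; but the tame symbol of $\{(-1)^{a_i}c,\hat f_i\}$ at $P_{i+1}$, where $\hat f_i$ has a zero of order $m_i$, equals $((-1)^{a_i}c)^{m_i}$, which must therefore be $1$. Consequently $m_i\{(-1)^{a_i}c,\hat f_i\}=\{1,\hat f_i\}=0$, so that symbol is torsion and vanishes in $K_2^M(k(C))\otimes\Q$. Thus $a_{i-1}\xi_{i+1}=a_{i+1}\xi_{i-1}$ for each $i$, which gives $\xi_1/a_1=\xi_2/a_2=\xi_3/a_3$.

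For (ii) it suffices to treat a single transposition, say $P_1\leftrightarrow P_2$. Since $(P_1)-(P_3)$ has the same order in $\operatorname{Pic}^0(C)$ as $(P_3)-(P_1)$, and likewise for the other pairs, one may take $f_2^{-1},f_1^{-1},f_3^{-1}$ (with orders $m_2,m_1,m_3$ respectively) as the functions for the permuted triple; a short computation using bilinearity, $\{g^{-1},h^{-1}\}=\{g,h\}$ and the normalizations identifies the resulting symbol with $-\{P_1,P_2,P_3\}$. The same computation applies to every transposition, and as transpositions generate $S_3$ and the induced action on the line spanned by $\{P_1,P_2,P_3\}$ is through a character, this action is the signature. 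The one step I expect to need care is the passage in the third paragraph: one must already know $\xi_i\in\ker T$ -- whence the appeal to Weil reciprocity in the second paragraph -- before one can see that the ``error term'' $\{(-1)^{a_i}c,\hat f_i\}$ is torsion and so disappears after $\otimes\Q$; everything else is routine manipulation with divisors, tame symbols, and the Steinberg and bilinearity relations in $K_2^M$.
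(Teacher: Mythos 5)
Your proof is correct. The paper itself gives no argument for this proposition --- it is quoted from Dokchitser--de Jeu--Zagier \cite[Proposition 4.3]{Zagier} --- and your write-up is essentially a faithful reconstruction of the proof given there: normalizing $f_i$ by $f_i(P_i)$ kills the tame symbols at $P_{i\pm1}$, Weil reciprocity (with norms, which are trivial here since the $P_i$ are $k$-rational) kills the one at $P_i$, and the relation $\prod_i \hat f_i^{\,a_i}=c$ shows the three symbols agree up to the $\operatorname{lcm}$ factors and a torsion constant symbol, which dies after $\otimes\,\Q$. You also correctly flag the one point of logical order, namely that $\xi_i\in\ker T$ must be established before the error term $\{(-1)^{a_i}c,\hat f_i\}$ can be seen to be torsion.
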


We apply this to $k=\C$ and $C=X_t$ ($t \in S(\C)$). 
Let $O=[-1 : 1: 0]$ be the origin of $X_t$ regarded as an elliptic curve. 
Let $\rho=(1\ 2\ 3) \in S_3$ and $\zeta=e^{2\pi i/3}$ be as in the proof of Proposition \ref{prop:diff} (ii). 
Then the 3-torsion points of $X_t$ are the flex points:
 $$X_t[3]=\{ \rho^i\zeta^j(O) \mid i, j \in \Z/3\Z \}.$$

\begin{dfn}\label{def:1}
We define elements of $H^2_{\mathscr{M}}(X_t, \Q(2))$ by 
\begin{align*}
&\xi(\zeta)_t=\{O, \zeta(O), \zeta^2(O)\},\\
& \xi(\rho z)_t=\{O, \rho z (O), (\rho z)^2(O)\} \quad (z \in \mu_3).
\end{align*}
When $t \in \Q(\mu_3)$, these elements are defined over $\Q(\mu_3)$. When $t \in \Q$, the element $\xi(\rho)_t$ is defined over $\Q$.
Explicitly, these elements are as follows:
\begin{align*}
&\xi(\zeta)_t=\left\{-\dfrac{t+x+y}{x+\zeta^2 y+\zeta t}, -\dfrac{ \zeta^2 t+x+\zeta y}{t+x+y}\right\}, \\
&\xi(\rho z)_t=\left\{-\dfrac{t+x+y}{z+x+z^2ty}, -\dfrac{z^2+tx+zy}{t+x+y}\right\} \quad (z \in \mu_3).
\end{align*}
\end{dfn}

On these elements, the group $\mu_3 \rtimes S_3$ acts as follows.

\begin{ppn} \label{lemm:action}
The group $\mu_3 \rtimes S_3$ acts on the elements 
$ \xi(\zeta)_t$, $\xi(\rho)_t$, $\xi(\rho \zeta)_t$ and $\xi(\rho \zeta^2)_t$
via the character as in Proposition \ref{prop:diff} (ii). 
\end{ppn}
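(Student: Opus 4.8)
The plan is to verify the claimed action directly on the explicit symbols given in Definition \ref{def:1}, using the functoriality of the symbol construction under automorphisms of $X_t$ together with the sign rule in Proposition \ref{const: symbol} (ii). Since $\mu_3 \rtimes S_3$ is generated by $\mu_3$, the transposition $\tau = (1\ 2)$, and the $3$-cycle $\rho = (1\ 2\ 3)$, it suffices to check the action of each generator on each of the four elements $\xi(\zeta)_t$, $\xi(\rho)_t$, $\xi(\rho\zeta)_t$, $\xi(\rho\zeta^2)_t$, and confirm the result equals multiplication by the sign of the image in $S_3$ (so $+1$ for $\zeta \in \mu_3$ and for $\rho$, and $-1$ for $\tau$).

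First I would treat $\mu_3$. An element $w \in \mu_3$ acts on $X_t$ by $[x_0:y_0:z_0] \mapsto [wx_0 : w^2 y_0 : z_0]$, and one checks it permutes the set of $3$-torsion points $\{\rho^i \zeta^j(O)\}$, acting on the subgroup generated by any chosen $3$-torsion point. Because $w$ fixes the origin $O$ and, by a short computation, sends the triple $\{O, g(O), g^2(O)\}$ to another triple of the same form $\{O, g'(O), g'^2(O)\}$ arising from a conjugate (or the same) automorphism $g'$, the naturality statement in Proposition \ref{const: symbol} identifies $w^* \xi = \pm \xi$ with the sign being that of the induced permutation of variables, which is $+1$ since $w \in \mu_3$ projects trivially to $S_3$. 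Concretely one can simply substitute $(x,y) \mapsto (wx/\text{(scaling)}, \dots)$ into the two-term symbol expressions and use the relations $\{f, c\} = 0$ for constants $c$ and bilinearity modulo torsion to see the symbol is unchanged.

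Next I would handle $\tau = (1\ 2)$, which swaps $x_0 \leftrightarrow y_0$, hence $x \leftrightarrow y$ on the affine model, and also acts on $\mu_3$ by $\zeta \mapsto \zeta^{-1} = \zeta^2$ (the semidirect product structure), so it sends $\xi(\zeta)_t$ to $\xi(\zeta^2)_t$; but $\xi(\zeta^2)_t = \{O, \zeta^2(O), \zeta(O)\}$ is the same triple as $\xi(\zeta)_t$ with the last two points swapped, so by Proposition \ref{const: symbol} (ii) it equals $-\xi(\zeta)_t$. For $\xi(\rho z)_t$, the element $\tau$ conjugates $\rho$ to $\rho^{-1} = \rho^2$ and $z$ to $z^2$, and one tracks through how $\{O, \rho z(O), (\rho z)^2(O)\}$ is carried to $\{O, (\rho z)^{-1}(O), (\rho z)^{-2}(O)\}$, again a reordering of the same three points by an odd permutation, giving the factor $-1$. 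Finally, for $\rho$ itself: $\rho$ fixes $O$ is false — rather $\rho$ acts on the $3$-torsion, and one checks $\rho \cdot \xi(\zeta)_t$ returns a triple differing from the original by an even permutation (or is literally the same symbol up to the relations), yielding $+1$; similarly $\rho$ conjugates $\rho z$ to $\rho z$ up to the $\mu_3$-part, and the triple is cyclically permuted, which is even, so the factor is $+1$.

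The main obstacle I expect is purely bookkeeping: correctly computing, for each generator, the permutation it induces on the ordered triple of $3$-torsion points and confirming its parity, while keeping track of the $S_3$-action on $\mu_3$ in the semidirect product (the twist $\sigma(\zeta) = \zeta^{\operatorname{sgn}(\sigma)}$). One must be careful that "a permutation of the points acts by the signature" in Proposition \ref{const: symbol} (ii) refers to permutations of the three chosen points $P_1, P_2, P_3$, not to the symmetry group of the curve; so the argument is to first use naturality of the construction (an automorphism $g$ of $X_t$ sends $\{P_1, P_2, P_3\}$ to $\{g P_1, g P_2, g P_3\}$), then recognize the image triple as a reordering of the original defining triple, and only then apply the signature rule. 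Once the orbit structure of $\mu_3 \rtimes S_3$ on the flex points is pinned down, each verification reduces to reading off a parity, and the explicit two-term symbol formulas in Definition \ref{def:1} provide an independent cross-check via direct substitution.
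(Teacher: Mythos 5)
There is a genuine gap, and it sits at exactly the step your proposal waves through. The signature rule of Proposition \ref{const: symbol}~(ii) applies only when the automorphism permutes the three chosen points \emph{among themselves}, so your argument works for the pairs where this happens (e.g.\ $\zeta^*$ and $\tau^*$ on $\xi(\zeta)_t$, or $\tau^*$ and $\rho^*$ on $\xi(\rho)_t$). But for $\rho^*$ acting on $\xi(\zeta)_t$ the image triple is $\{\rho^2(O),\rho^2\zeta(O),\rho^2\zeta^2(O)\}$, which is \emph{disjoint} from $\{O,\zeta(O),\zeta^2(O)\}$: the nine flexes split into three $\mu_3$-orbits and $\rho^2$ moves the orbit of $O$ to a different one. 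The same problem occurs for $w^*$ ($w\in\mu_3$) and $\rho^*$ on $\xi(\rho\zeta^{\pm1})_t$, since $w^{-1}(O)\neq O$. So your claim that ``$\rho\cdot\xi(\zeta)_t$ returns a triple differing from the original by an even permutation'' is false, and no parity bookkeeping can close this case; one must prove that two symbols built from genuinely different triples of $3$-torsion points coincide in $H^2_{\mathscr M}(X_t,\Q(2))$.

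The paper's proof supplies the missing idea: it maps both symbols into the elliptic Bloch group via the Bloch map $\beta\colon\{f,g\}\mapsto\sum m_in_j(P_i-Q_j)$, computes the relevant differences of flexes using the explicit group law of the Hesse cubic (e.g.\ $O-\zeta(O)=\zeta^2(O)$, $\rho^2(O)-\rho^2\zeta(O)=\zeta^2(O)$), and finds $\beta(\xi(\zeta)_t)=\beta(\rho^*\xi(\zeta)_t)$. The injectivity of $\beta$ modulo $K_2(\C)\otimes\Q$ (Theorem \ref{thm: inj}, due to Goncharov--Levin and Brunault) then gives $\rho^*\xi(\zeta)_t=\xi(\zeta)_t+c$ with $c\in K_2(\C)\otimes\Q$, and the ambiguity $c$ is killed by the relation $(\rho^*)^3=\mathrm{id}$. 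Your proposal contains none of these ingredients --- no Bloch map, no use of the group law beyond identifying the flexes, and no mechanism for comparing symbols attached to distinct triples --- so as written it does not prove the proposition for the generator $\rho$ (nor for $\mu_3$ acting on $\xi(\rho\zeta^{\pm1})_t$), which is precisely the substantive content of the statement.
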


To prove the proposition, we compute their images under the Bloch map. 
 Let $E$ be an elliptic curve over a field $k$ and $\overline{k}$ be an algebraic closure of $k$. 
 Let $\Z[E(\overline{k})]$ be the group algebra of $E(\overline{k})$. 
The Bloch map is defined as 
$$\beta \colon  \overline{k}(E)^{\ast} \otimes_{\Z} \overline{k}(E)^{*} \to \Z[E(\overline{k})], \quad \{f, g\} \mapsto \sum_{i, j}m_in_j(P_i-Q_j),$$
where $\operatorname{div} (f)=\sum_im_i (P_i)$ and $\operatorname{div} (g)=\sum_jn_j (Q_j)$.
Let $I \subset \Z[E(\overline{k})]$ be the augmentation ideal. 
Then $\beta$ takes values in $I^4$.
Let $R_3^{\ast}(E) \subset \Z[E(\overline{k})]$ be the subgroup generated by the divisors 
$\beta(\{f, 1-f\})$
for all $f \in \overline{k}(E)$, $ f \neq 0, 1$. 
The elliptic Bloch group of $E$ is defined by 
$$B_3^{\ast}(E)=\left(I^4/R_3^{\ast}(E)\right)^{\operatorname{Gal}(\overline{k}/k)}.$$
We have the following theorem due to Goncharov-Levin and Brunault.

\begin{thm}[{\cite[Theorem 1.5]{Goncharov}}, {\cite[Theorem B.2]{Brunault}}] \label{thm: inj}
The composite of the maps
$$H^2_{\mathscr{M}}(E, \Q(2))/K_2(k)\otimes \Q  \xrightarrow{\beta} B_3^{\ast}(E) \otimes \Q$$
is injective. 
(Note that the group $\operatorname{Tor}(k^{*}, E(k))$ which appears in loc.  cit. is torsion.)
\end{thm}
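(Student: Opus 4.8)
The plan is to establish injectivity by exhibiting, after $\otimes\,\Q$, a left inverse to $\beta$, the only obstruction to an integral left inverse being the torsion group $\operatorname{Tor}(k^*,E(k))$; this is the injectivity half of the elliptic analogue of the Bloch--Suslin theorem, due to Goncharov--Levin \cite{Goncharov} and Brunault \cite{Brunault}. First I would check that $\beta$ really descends to the displayed source. A class in $H^2_{\mathscr{M}}(E,\Q(2))=\ker T\otimes\Q$ is represented by a finite sum $\xi=\sum_\alpha\{f_\alpha,g_\alpha\}$ of Milnor symbols with all tame symbols trivial; since the divisors of the $f_\alpha,g_\alpha$ have degree zero and $T(\xi)=1$, the element $\beta(\xi)$ lies in $I^4$ (as recalled above). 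By construction $R_3^*(E)$ is generated by the images $\beta(\{f,1-f\})$ of Steinberg symbols, so $\beta$ kills the Steinberg relations modulo $R_3^*(E)$ and hence factors through $K_2^M(k(E))$; a symbol $\{a,b\}$ with $a,b\in k^*$ has trivial divisor and maps to $0$, so $K_2(k)$ is annihilated; and the whole construction is $\operatorname{Gal}(\overline k/k)$-equivariant, placing the image in the invariants $B_3^*(E)$. Thus $\beta$ induces the stated map. Next, if $\beta(\xi)\in R_3^*(E)\otimes\Q$, I would subtract the corresponding $\Q$-combination of Steinberg symbols --- which already vanish in $K_2^M(k(E))\otimes\Q$ --- to arrange $\beta(\xi)=0$ in $I^4\otimes\Q$ exactly. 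It then suffices to show that a tame-symbol-trivial class with $\beta(\xi)=0$ in $\Z[E(\overline k)]\otimes\Q$ lies in $K_2(k)\otimes\Q$.

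\emph{The core: a left inverse up to torsion.} Following Goncharov--Levin, I would realize both $H^2_{\mathscr{M}}(E,\Q(2))/K_2(k)$ and $B_3^*(E)$ as cohomology of the weight-two elliptic polylogarithmic complex, with $\beta$ induced by the comparison map to the motivic complex; injectivity then becomes the faithfulness of this comparison in the relevant degree. A model for the inverse is furnished by the symbol construction of Proposition \ref{const: symbol}: to a configuration $P_1,P_2,P_3$ with torsion pairwise differences one attaches the symbol $\{P_1,P_2,P_3\}\in H^2_{\mathscr{M}}(E,\Q(2))$, and one checks that $\beta$ of this symbol recovers the input configuration in $I^4/R_3^*(E)$. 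The defining relations of $R_3^*(E)$, together with the tame-symbol condition, are exactly those needed for this assignment to be well-defined modulo torsion, so the composite is the identity up to a torsion ambiguity. On the torsion-supported part --- the only part relevant to the elements $\xi(\zeta)_t,\xi(\rho z)_t$ of this paper, which arise from $3$-torsion --- this already gives an honest section, as one verifies by direct computation of the corresponding Bloch images.

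\emph{Killing torsion, and the obstacle.} The residual ambiguity is measured by $\operatorname{Tor}(k^*,E(k))$, the group appearing in \cite{Goncharov} and \cite{Brunault}; being a torsion abelian group, it dies after $\otimes\,\Q$, which yields the asserted injectivity. Brunault's appendix is precisely what pins down this term and extends the statement to an arbitrary base field $k$. The main obstacle is the faithfulness in the core step: proving that $R_3^*(E)$ and the tame-symbol condition generate \emph{all} relations among symbols up to torsion, equivalently that the comparison from the elliptic Bloch complex to the motivic complex is injective on cohomology. This is the substance of the Goncharov--Levin theorem and is not formal; the remaining steps are bookkeeping with divisors, the augmentation filtration on $\Z[E(\overline k)]$, and the Steinberg relation.
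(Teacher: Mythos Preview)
The paper does not give its own proof of this theorem: it is quoted verbatim from Goncharov--Levin \cite[Theorem 1.5]{Goncharov} and Brunault \cite[Theorem B.2]{Brunault}, with the only added content being the parenthetical observation that $\operatorname{Tor}(k^*,E(k))$ is torsion, so that the injectivity statement in those references survives tensoring with $\Q$. There is therefore no ``paper's approach'' to compare against; your proposal goes well beyond what the paper does.

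As for the sketch itself, the framing via the elliptic polylogarithmic complex and the identification of $\operatorname{Tor}(k^*,E(k))$ as the torsion obstruction are indeed the shape of the Goncharov--Levin argument. However, your ``core'' paragraph contains a real misstep: the symbol construction of Proposition~\ref{const: symbol} does \emph{not} furnish a left inverse to $\beta$. That construction takes as input a triple of points with \emph{torsion} pairwise differences and produces a single element of $H^2_{\mathscr{M}}(E,\Q(2))$; it is not defined on general elements of $I^4$ or of $B_3^*(E)$, and there is no evident way to extend it to such. The actual Goncharov--Levin argument proceeds by a delicate analysis of the elliptic Bloch complex and its comparison with the motivic complex, and the injectivity (``faithfulness'') you flag as the main obstacle is precisely the nontrivial content --- it is not obtained by writing down an explicit section via torsion-point symbols. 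Your first paragraph (that $\beta$ descends, lands in $I^4$, kills $K_2(k)$, is Galois-equivariant) is fine bookkeeping, and your final paragraph correctly isolates where the difficulty lies; but the middle paragraph, as written, does not constitute a proof strategy that could be carried out.
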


 \begin{proof}[ Proof of Proposition \ref{lemm:action}]
 We give a proof only for $\xi(\zeta)_t$ because the other cases are proved similarly.  
 By Proposition \ref{const: symbol} (ii), we have
\begin{align*}
&\zeta^*(\xi(\zeta)_t)= \{\zeta^{-1}(O), \zeta^{-1}\zeta(O), \zeta^{-1}  \zeta^2(O)\}
= \{\zeta^2(O), O, \zeta(O)\}
=
\xi(\zeta)_t, \\
&\tau^*(\xi(\zeta)_t)=\{\tau^{-1}(O), \tau^{-1}\zeta(O), \tau^{-1}\zeta^2(O)\}
=\{O, \zeta^2(O), \zeta(O)\}
=-\xi(\zeta)_t.
\end{align*}
Hence we are left to show $\rho^*(\xi(\zeta)_t)= \xi(\zeta)_t$. 
 We write $\xi(\zeta)_t=\{f, g\}$ as in Definition \ref{def:1}. 
 Then 
 \begin{alignat*}{2}
&\operatorname{div}(f)=3(O)-3(\zeta(O)),&& \operatorname{div}(g)=3(\zeta(O))-3(\zeta^2(O)), \\
&\operatorname{div}(\rho^*(f))=3(\rho^2(O))-3(\rho^2\zeta(O)), \quad && \operatorname{div}(\rho^*(g))=3(\rho^2\zeta(O))-3(\rho^2\zeta^2(O)).
\end{alignat*}
By \cite{Kaneko}, the group law of $X_t$ is given by
\begin{align*}
&[x: y :z] + [x^{\prime}:y^{\prime}:z^{\prime}]  \\
&=[yzz^{\prime 2} -x^2x^{\prime}y^{\prime} : xyy^{\prime 2} -z^2x^{\prime}z^{\prime 2}:  xzx^{\prime 2} -y^2y^{\prime}z^{\prime}] \\
&=[xyx^{\prime 2} -z^2y^{\prime}z^{\prime} : xzz^{\prime 2} -y^2x^{\prime}y^{\prime}:  yzy^{\prime 2} -x^2x^{\prime}z^{\prime}] \\
&=[xzy^{\prime 2} -y^2x^{\prime}z^{\prime} : yzx^{\prime 2} -x^2y^{\prime}z^{\prime}:  xyz^{\prime 2} -z^2x^{\prime}y^{\prime}]. 
\end{align*}
In particular, the inverse is given by $- [x: y : z ]=[y: x : z]$. 
It follows that 
\begin{align*}
&O-\zeta(O)=\zeta^2(O), \quad O-\zeta^2(O)=\zeta(O),   \quad\zeta(O)-\zeta^2(O)=\zeta^2(O), \\
&\rho^2(O)-\rho^2\zeta(O)=\zeta^2(O),  \ \rho^2(O)-\rho^2\zeta^2(O)=\zeta(O), \ \rho^2\zeta(O)-\rho^2\zeta^2(O)=\zeta^2(O), 
\end{align*}
and hence
$$\beta(\{f, g\})=\beta(\{\rho^*(f), \rho^*(g)\})=9(2(\zeta^2(O))-(\zeta(O))-(O)).$$
Therefore, by Theorem \ref{thm: inj} for $k=\C$, we have 
$$\rho^{*}(\xi(\zeta)_t)=\xi(\zeta)_t+c \quad (c \in K_2(\C) \otimes \Q). $$ 
Since $\rho^{*}(c)=c$ and $(\rho^{*})^3$ is trivial, we conclude that $c=0$. 
\end{proof}

 \subsection{The dlog map}
 Put $U=f^{-1}(S)$ as in Corollary \ref{GM} and $Y=X \setminus U$. 
 Then $U$ is an elliptic curve over $S$, and by construction, the elements 
 $\xi(\rho)_t$, $\xi(\zeta)_t$, $\xi(\rho \zeta)_t$ and $\xi(\rho \zeta^2)_t$
 extend respectively to  $\xi(\rho)$, $\xi(\zeta)$, $\xi(\rho\zeta)$ and $\xi(\rho\zeta^2)$
 in $H^2_{\mathscr{M}}(U, \Q(2))$.
Since the bad fibers $X_1$, $X_{\zeta}$, $X_{\zeta^2}$ and $X_{\infty}$ are the standard 3-gon, $Y$ is a normal crossing divisor of $X$. 
Hence we have the $\operatorname{dlog}$ map
$$\operatorname{dlog} \colon H^2_{\mathscr{M}}(U, \Q(2))
 \rightarrow \Gamma (X, \Omega_X^2(\log Y));\,\,\, \{f, g\} \mapsto \frac{df}{f} \wedge \frac{dg}{g}$$
to the space of algebraic differential 2-forms on $X$ with logarithmic poles along $Y$. 
The image of our elements is computed as follows. 
Define a holomorphic 1-form on $X$ by 
$$\omega=\frac{dy}{x^2-ty}.$$

\begin{ppn}\label{ppn:1}
We have
\begin{align*}
&\operatorname{dlog}(\xi(\zeta))=-\frac{(1-\zeta)^3}{1-t^3}dt \wedge \omega, \\
&\operatorname{dlog}(\xi(\rho z))=\frac{(1-z^2 t)^3}{1-t^3}dt \wedge \omega \quad (z \in \mu_3).
\end{align*}
\end{ppn}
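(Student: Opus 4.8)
The plan is to compute $\operatorname{dlog}(\xi(z\rho))$ and $\operatorname{dlog}(\xi(\zeta))$ directly from the explicit formulas for the symbols given in Definition \ref{def:1}, and to recognize the answer as a multiple of $dt\wedge\omega$ using the presentation of $\Gamma(X,\Omega^2_X(\log Y))$ as a rank-one module over functions pulled back from $S$. Concretely, I would start from the affine equation $x^3+y^3+1=3txy$ on $U$, and for the symbol $\xi(z\rho)_t=\{f,g\}$ with
\[
f=-\frac{t+x+y}{z+x+z^2ty},\qquad g=-\frac{z^2+tx+zy}{t+x+y},
\]
I would expand $\operatorname{dlog} f\wedge\operatorname{dlog} g=\frac{df}{f}\wedge\frac{dg}{g}$ on $X_t$ for fixed $t$. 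Because both $f$ and $g$ have divisors supported on the $3$-torsion points and their total degree is zero, the $2$-form $\frac{df}{f}\wedge\frac{dg}{g}$, restricted to the fiber, has at worst logarithmic poles along those torsion points; but on a curve a $2$-form is automatically zero once we fix $t$, so the real content is the full $2$-form on the surface $X$, where the $dt$-component survives. The cleanest route is to write everything in terms of $x,t$ (using $y$ as a function of $x$ and $t$ via the defining cubic, or better, keeping $x,y,t$ and imposing the one relation $d(x^3+y^3+1-3txy)=0$), and then collect the coefficient of $dt\wedge dx$, finally converting $dx$ back to $\omega$ via $\omega=-dx/(y^2-tx)$.

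The key computational step is the following: on $U$ the relation $3(x^3+y^3+1)\,\text{---no---}$ I mean $x^2\,dx+y^2\,dy=t(y\,dx+x\,dy)+xy\,dt$, i.e.
\[
(x^2-ty)\,dx+(y^2-tx)\,dy=xy\,dt,
\]
so on each fiber $dy/dx=-(x^2-ty)/(y^2-tx)$, consistently with the two expressions for $\omega$. Using this, any wedge $dt\wedge(P\,dx+Q\,dy)$ on $X$ reduces to $dt\wedge\bigl((P(y^2-tx)-Q(x^2-ty))/(y^2-tx)\bigr)dx = -\bigl(P(y^2-tx)-Q(x^2-ty)\bigr)\,dt\wedge\omega$, and likewise $dx\wedge dy = xy\,dt\wedge\text{(something)}$; in fact $dx\wedge dy$ restricted to $X$ equals $\frac{xy\,dt}{\ \cdot\ }$-multiples of $dt\wedge dx$ once one eliminates $dy$. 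So after clearing denominators the whole $\operatorname{dlog}$ becomes a rational function times $dt\wedge\omega$, and I need only check that this rational function equals $(1-z^2t)^3/(1-t^3)$ (resp.\ $-(1-\zeta)^3/(1-t^3)$). I would verify the identity by reducing it, modulo the ideal generated by $x^3+y^3+1-3txy$, to a polynomial identity in $x,y,t$ (clearing the denominators $t+x+y$, $z+x+z^2ty$, $z^2+tx+zy$, $x^2-ty$), which is then a finite check — either by hand using the symmetry under $\mu_3\rtimes S_3$, or by a Gröbner-basis computation.

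There is a conceptual shortcut that sidesteps most of the algebra and that I would actually present. By Proposition \ref{lemm:action} the elements $\xi(\zeta)$, $\xi(\rho z)$ transform under $\mu_3\rtimes S_3$ through the sign character, and by Proposition \ref{prop:diff}(ii) so does $H^1_{\mathrm{dR}}$; since $\operatorname{dlog}$ is equivariant and lands in $\Gamma(X,\Omega^2_X(\log Y))$, and since the relative form part is spanned by $\omega$ (which also transforms by the sign character — $\tau^*\omega=-\omega$, $\rho^*\omega=\omega$, $\zeta^*\omega=\omega$), the image must be of the form $\phi(t)\,dt\wedge\omega$ for a rational function $\phi$ on $\mathbb P^1$ that is invariant under the induced action on the base. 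The possible poles of $\phi$ are at the bad fibers $t\in\mu_3\cup\{\infty\}$; a local analysis at $t=1,\zeta,\zeta^2$ (standard $3$-gon, so a simple pole) and at $t=\infty$, together with the location of the zeros dictated by where $f$ or $g$ acquires a zero/pole that collides, pins down $\phi$ up to a constant, and the constant is found by evaluating at one convenient point such as $t=0$ (the Fermat cubic), where $f,g$ become explicit and the residue/leading term is easy. I expect the main obstacle to be bookkeeping: correctly identifying the divisors of $f$ and $g$ as functions on the \emph{surface} $X$ (not just the generic fiber), so that the vertical components over the bad $t$ are accounted for and the order of the zero of $\phi$ coming from $(1-z^2t)$ is seen to be exactly $3$ — this requires knowing how the flex points $O,\rho z(O),(\rho z)^2(O)$ degenerate as $t\to 1,\zeta,\zeta^2$, which can be read off from the equations $t+x+y=0$ etc.\ intersected with the cubic. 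Once that divisor-theoretic picture is clear, the identity $\operatorname{div}_X(\operatorname{coefficient})=3\,(t=z\bar z\,)\text{-fiber component}-3\,(\text{bad fibers})$ forces the stated formula, and the explicit term-by-term computation serves only as a confirmation.
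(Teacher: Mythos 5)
Your first, computational route is exactly the paper's proof: expand $\tfrac{df}{f}\wedge\tfrac{dg}{g}$ in $dt\wedge dx$, $dt\wedge dy$, $dx\wedge dy$, and use the relation $(x^2-ty)\,dx+(y^2-tx)\,dy=xy\,dt$ obtained by differentiating the defining cubic to rewrite everything as a rational multiple of $dt\wedge\omega$ (the paper records precisely your identities $dt\wedge dx=-\tfrac{y^2-tx}{x^2-ty}dt\wedge dy$ and $dx\wedge dy=\tfrac{xy}{x^2-ty}dt\wedge dy$, then simplifies the coefficient). That part of your proposal is complete and correct, so the statement is proved even if the algebraic simplification is left to a finite check.

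The ``conceptual shortcut'' you say you would actually present is a genuinely different route, and the first half of it is sound: a global section of $\Omega^2_X(\log Y)$ divided by $dt$ restricts on each good fiber to a holomorphic $1$-form, and since $H^0(X_t,\Omega^1)=\C\,\omega_t$ the image is $\phi(t)\,dt\wedge\omega$ with $\phi$ a rational function of $t$ alone (you do not even need the equivariance for this). But be aware that the part you flag as ``bookkeeping'' is where all the content sits, and it is not obviously lighter than the direct computation. To pin down $\phi$ you must: (a) determine the order of pole of $dt\wedge\omega$ itself along each bad fiber, including $t=\infty$ where $dt$ has a double pole and $X_\infty$ is the triangle $x_0y_0z_0=0$, before you can translate ``log poles along $Y$'' into pole orders of $\phi$; (b) explain the exact cancellation in $\tfrac{(1-z^2t)^3}{1-t^3}$ -- after removing the common factor $1-z^2t$ the coefficient has a \emph{double} zero at $t=z$ and simple poles at the other two roots of unity, so the naive count ``order $3$ zero minus the three bad fibers'' must be matched carefully against the degeneration of the sections $O,\rho z(O),(\rho z)^2(O)$ into components of the $3$-gon; and (c) fix the remaining constant, which at $t=0$ still requires computing the $dt$-component of the symbol to first order near the Fermat fiber -- a computation of the same nature as the direct one. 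None of these is a fatal obstacle, but as written they are assertions rather than arguments; the paper's term-by-term computation avoids all of them.
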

\begin{proof}
We only give a proof for $\xi(\rho)$; the others can be proved similarly. By the definition, 
\begin{align*}
\operatorname{dlog}(\xi(\rho))=&\frac{(1-y)(1+x+y)dt+(1-t)(1-y)dx+(1-t)(1+t+x)dy }{(t+x+y)(1+x+ty)} \\
& \wedge \frac{-(1-x)(1+x+y)dt-(1-t)(1+y+t)dx-(1-t)(1-x)dy}{(1+tx+y)(t+x+y)} \\
=&-\frac{(1-t)(1-y)(1+x+y)}{(1+tx+y)(1+x+ty)(t+x+y)} dt\wedge dx \\
&+\frac{(1-t)(1-x)(1+x+y)}{(1+tx+y)(1+x+ty)(t+x+y)} dt\wedge dy\\
&+\frac{(1-t)^2(2+t)}{(1+tx+y)(1+x+ty)(t+x+y)} dx\wedge dy.
\end{align*}
By using the relation
\begin{align*}
dt \wedge dx=-\frac{y^2-tx}{x^2-ty}dt \wedge dy, \quad dx \wedge dy=\frac{xy}{x^2-ty}dt \wedge dy,
\end{align*}
we obtain the proposition. 
\end{proof}

\begin{ppn}\label{lemm:image}
The images of the elements $\xi(\rho)$, $\xi(\zeta)$, $\xi(\rho \zeta)$ and $\xi(\rho \zeta^2)$
under $\operatorname{dlog}$ are linearly independent. 
\end{ppn}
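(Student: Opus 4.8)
The plan is to leverage Proposition \ref{ppn:1}, which already computes the four $\operatorname{dlog}$ images explicitly as rational multiples of $dt \wedge \omega$; so the question reduces to showing that the four coefficient functions
$$
-\frac{(1-\zeta)^3}{1-t^3}, \quad \frac{(1-t)^3}{1-t^3}, \quad \frac{(1-\zeta t)^3}{1-t^3}, \quad \frac{(1-\zeta^2 t)^3}{1-t^3}
$$
(taking $z=1, \zeta, \zeta^2$ in the second formula) are linearly independent over $\Q$ — equivalently, clearing the common denominator $1-t^3$, that the polynomials $-(1-\zeta)^3$, $(1-t)^3$, $(1-\zeta t)^3$, $(1-\zeta^2 t)^3$ are linearly independent over $\Q$ inside $\C[t]$. (Strictly, linear independence over $\C$ of the images in $\Gamma(X,\Omega^2_X(\log Y))$ would follow the same way; independence over $\Q$ is what is relevant for motivic cohomology, which is a $\Q$-vector space, and I would state whichever the later sections need — here I take $\Q$.)

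First I would expand: $(1-\zeta^j t)^3 = 1 - 3\zeta^j t + 3\zeta^{2j} t^2 - \zeta^{3j} t^3 = 1 - 3\zeta^j t + 3\zeta^{2j} t^2 - t^3$ for $j=0,1,2$, using $\zeta^3=1$. A $\Q$-linear (in fact $\C$-linear) relation
$$
a\cdot(1-\zeta)^3 + b\cdot(1-t)^3 + c\cdot(1-\zeta t)^3 + d\cdot(1-\zeta^2 t)^3 = 0
$$
then becomes, comparing coefficients of $1, t, t^2, t^3$: from the $t^3$-coefficient, $b+c+d=0$; from the $t^2$-coefficient, $b + \zeta c + \zeta^2 d = 0$ (after dividing by $3$); from the $t^1$-coefficient, $b + \zeta c + \zeta^2 d = 0$ again (after dividing by $-3$) — the same equation; and from the constant term, $a(1-\zeta)^3 + b + c + d = 0$, which with $b+c+d=0$ forces $a(1-\zeta)^3 = 0$, hence $a=0$. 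The two surviving equations $b+c+d=0$ and $b+\zeta c+\zeta^2 d=0$ do not yet force $b=c=d=0$ — their solution space is one-dimensional, spanned by $(b,c,d)$ proportional to $(1, \omega', \omega'')$ for appropriate roots of unity. So a naive coefficient comparison is not quite enough, and this is the one subtlety.

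The remedy is to observe that over $\Q$ these coefficients are $\Q$-linearly independent: $\{1,\zeta\}$ is a $\Q$-basis of $\Q(\mu_3)$, and the $3\times 3$ system over $\Q(\mu_3)$ reducing to $b+c+d=0$, $b+\zeta c+\zeta^2 d = 0$ has, for $(b,c,d)\in\Q^3$, only the trivial solution: from $b+\zeta c + \zeta^2 d = 0$ with $\zeta^2 = -1-\zeta$ we get $(b-d) + (c-d)\zeta = 0$, so $b=c=d$ by $\Q$-linear independence of $1,\zeta$, and then $b+c+d=0$ gives $b=c=d=0$. Thus over $\Q$ all of $a,b,c,d$ vanish, proving independence of the four $\operatorname{dlog}$ images in $\Gamma(X,\Omega^2_X(\log Y))\otimes\Q$ — this is really the content of the proposition. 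If instead one wants $\C$-linear independence, the statement is false (the relation above shows the four images span a $3$-dimensional $\C$-space, with $\operatorname{dlog}(\xi(\zeta))$ a constant times $dt\wedge\omega$ being dependent on the other three over $\C$), so the proposition must be read over $\Q$, and I would make that explicit. The main obstacle is precisely this distinction: the argument is a two-line linear algebra computation once Proposition \ref{ppn:1} is invoked, but one must be careful that the relevant independence is over $\Q$, using that $1$ and $\zeta$ are $\Q$-linearly independent, rather than over $\C$.
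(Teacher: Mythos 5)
Your route is genuinely different from the paper's: the paper proves independence by computing the Poincar\'e residues of the four log $2$-forms at a node of each of the four bad fibers $X_1, X_{\zeta}, X_{\zeta^2}, X_{\infty}$, obtaining four vectors in $\C^4$ with nonzero determinant (hence $\C$-linear independence), whereas you reduce everything to the $\Q$- (or $\C$-) independence of the four polynomials $-(1-\zeta)^3$, $(1-t)^3$, $(1-\zeta t)^3$, $(1-\zeta^2 t)^3$ via Proposition \ref{ppn:1}. That reduction is sound, and your argument for $\Q$-independence (using $b+c+d=0$, $b+\zeta c+\zeta^2 d=0$ and the $\Q$-independence of $1,\zeta$) is valid as written.

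However, there is a concrete arithmetic error that leads you to a false claim. The $t^2$-coefficient of $(1-\zeta^j t)^3$ is $3\zeta^{2j}$, so for $j=0,1,2$ the resulting equation is $b+\zeta^2 c+\zeta d=0$, \emph{not} $b+\zeta c+\zeta^2 d=0$; it is not the same equation as the one coming from the $t$-coefficient. The three equations from the $t$, $t^2$, $t^3$ coefficients are
$$b+c+d=0,\qquad b+\zeta c+\zeta^2 d=0,\qquad b+\zeta^2 c+\zeta d=0,$$
whose matrix is the invertible $3\times 3$ Vandermonde matrix in $1,\zeta,\zeta^2$; hence $b=c=d=0$ over $\C$, and then $a=0$ from the constant term. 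So the four images are linearly independent over $\C$, your parenthetical assertion that they span only a $3$-dimensional $\C$-space (with $\operatorname{dlog}(\xi(\zeta))$ dependent on the others) is false, and the detour through $\Q$-rationality is unnecessary. This is consistent with the paper's residue computation, whose $4\times 4$ integer matrix has determinant $-3\neq 0$. Once the $t^2$-equation is corrected, your coefficient-comparison argument gives the full ($\C$-linear) statement in two lines; as written, it proves a weaker statement than the proposition and inserts an incorrect remark that should be removed.
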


\begin{proof}
For each $t \in  \mu_3 \cup \{\infty\}$, choose a singular point $Q_t$ of $X_t$ as
$$Q_{t}=[t : t :1] \  (t \in \mu_3 ), \quad Q_{\infty}=[0 : 0 : 1].$$
Let $\operatorname{Res}_t \colon \Gamma(U, \Omega_X^2(\log Y)) \to \C$ be the Poincar\'e residue at $Q_t$, and 
put $\Phi = \oplus_t \operatorname{Res}_t \circ \operatorname{dlog} \colon H^2_{\mathscr{M}}(U, \Q(2)) \to \C^{\oplus 4}$.
Then one computes 
\begin{align*}
&\Phi(\xi(\zeta))=(1, 1, 1, 0), \\
&\Phi(\xi(\rho))=(0,-1,-1,1),\quad  \Phi(\xi(\rho \zeta))=(1,0,1,1),  \quad \Phi(\xi(\rho \zeta^2))=(-1,1, 0, 1).
\end{align*}
It is easy to verify that these vectors are linearly independent. 
\end{proof}

\section{Computation of regulators}
We consider the regulator map for $t \in S(\C)$,
$$r_{\D} \colon H^2_{\mathscr{M}}(X_t, \Q(2)) \rightarrow H^2_{\mathscr{D}}(X_t, \Q(2))={\rm Hom}(H_1(X_t(\C), \Z), \C/\Q(2)),$$
where $H^2_{\mathscr{D}}(X_t, \Q(2))$ is the Deligne cohomology group (cf.\,\cite{Nekovar}) and $\Q(2)=(2\pi i)^2 \Q$. 
We will describe the images of our elements constructed in the previous section in terms of 
 some hypergeometric functions.

\subsection{Kamp{\'e} de F{\'e}riet hypergeometric functions}
In this subsection, we recall the definition of  Kamp{\'e} de F{\'e}riet hypergeometric functions which are introduced in \cite{appkampe}, see also \cite{sk}. 
\begin{dfn}
 Kamp{\'e} de F{\'e}riet hypergeometric function is defined by 
\begin{align*}
&F_{C;D;D^{\prime}}^{A;B;B^{\prime}}\left[\left.
\begin{matrix}
a_{1}, \dots, a_{A}  \\
c_{1}, \dots, c_{C}
\end{matrix}
; 
\begin{matrix}
b_{1} , \dots, b_{B} \\
d_{1}, \dots, d_{D}
\end{matrix}
;
\begin{matrix}
b_{1}^{\prime}, \dots, b_{B^{\prime}}^{\prime} \\
d_{1}^{\prime}, \dots, d_{D^{\prime}}^{\prime}
\end{matrix}
\right| x, y
\right] \\
&:= 
\sum_{m,n=0}^{\infty} 
\frac{
\prod_{i=1}^{A} (a_{i})_{m+n} \prod_{i=1}^{B} (b_{i})_{m} \prod_{i=1}^{B^{\prime}} (b_{i}^{\prime})_{n} 
}{\prod_{i=1}^{C} (c_{i})_{m+n} \prod_{i=1}^{D} (d_{i})_{m} \prod_{i=1}^{D^{\prime}} (d_{i}^{\prime})_{n}
} \frac{x^{m}y^{n}}{(1)_{m}(1)_{n}}. 
\end{align*}
\end{dfn}
In this paper, we only consider the case where $A=1$, $B=2$, $B^{\prime}=C=D=1$, $D^{\prime}=0$, and $x=y$. We remark that the double series 
$$F_{1;1;0}^{1;2;1}\left[\left.
\begin{matrix}
a  \\
c
\end{matrix}
; 
\begin{matrix}
b_1, b_2\\
d
\end{matrix}
;
\begin{matrix}
b^{\prime} \\
-
\end{matrix}
\right| x, y
\right]
$$ converges absolutely 
on $|x| \leqq 1$ and $|y| \leqq 1$ when the parameters satisfy the conditions \cite[Theorem 1]{hms}
\begin{align*}
&{\rm Re}(c+d-a-b_1-b_2)> 0, \\
&{\rm Re}(c-a-b^{\prime}) > 0, \\
&{\rm Re}(c+d-a-b_1-b_2-b^{\prime})> 0.
\end{align*}

 Similarly as the generalized hypergeometric function ${}_3F_2(x)$, the function $F_{1;1;0}^{1;2;1}(x, y)$ becomes a multi-valued function on $\C^2-\{x(1-x)y(1-y)=0\}$, being a solution of a system of partial differential equations \cite[XLVII]{appkampe}.

\subsection{Regulator formulas}
For an element $\xi \in H_{\mathscr{M}}^2(U, \Q(2))$, we write $\xi_t=\xi|_{X_t}$ ($t \in S(\C)$). 
For  any $\gamma \in H_1(X_t, \Q)$, considered as a section of the locally constant sheaf of relative homology, we put 
$$F_{\gamma}(t)=r_{\D}(\xi_t)(\gamma),$$
which defines a multivalued function on $S(\C)$.

Here we compute the regulator of the elements constructed in Section 3. 
First, we prove Theorem \ref{thm:main} except for the constant terms in (ii). 
Recall that, if $\xi=\sum\{f, g\}$, then we have Beilinson's formula (\cite[Lemma 1.3.1.b.]{Beilinson}, cf. \cite[(4.4.3)]{Ra}) 
$$F_{\gamma}(t)=\sum \int_{\gamma}\log f \frac{dg}{g}- \log g(O) \dfrac{df}{f},$$
where $O$ is the origin of a loop representing $\gamma$.
Its derivative is computed by the following lemma.

 \begin{lem}[a special case of {\cite[Proposition 3.1]{A}}]\label{prop:1}
 If $\operatorname{dlog}(\xi)=dt \wedge \varphi $ in $\Gamma(U, \Omega_U^2)$,
 then 
$$\frac{d}{dt}F_{\gamma}(t)= \int_{\gamma}\varphi. $$
 \end{lem}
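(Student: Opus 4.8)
The plan is to reduce the statement to a standard computation with the Beilinson regulator current and then differentiate under the integral sign. Recall from Beilinson's formula that for $\xi = \sum\{f,g\} \in H^2_{\mathscr{M}}(U,\Q(2))$ one has, for a loop $\gamma$ based at a point $O$,
\[
F_\gamma(t) = \sum \int_\gamma \log f\,\frac{dg}{g} - \log g(O)\,\frac{df}{f},
\]
where the integration is taken fiberwise over $X_t$. The key point is that this expression depends on $t$ both through the integrand (the functions $f$, $g$ and the $1$-form $dg/g$ vary with $t$, as does the base point $O$, which here is a flex point independent of $t$) and through the cycle $\gamma$, which, being a section of the locally constant sheaf of relative homology $R^1 f_* \Q$, is \emph{flat} for the Gauss--Manin connection. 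So the plan is: first, set up the differentiation of $F_\gamma(t)$ with respect to $t$ using the fact that $\gamma$ is locally constant; second, commute $d/dt$ with the integral $\int_\gamma$; third, identify the resulting fiberwise $1$-form with the contraction of $\operatorname{dlog}(\xi) = dt \wedge \varphi$ along $\partial/\partial t$.

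First I would observe that since $O$ is a fixed ($t$-independent) flex point, the term $\log g(O)\,df/f$ contributes a total derivative argument: more precisely, $\log g(O)$ is a function of $t$ alone, and $df/f$ restricted to $\gamma$ integrates against a fixed cycle, but one must check that the $d/dt$ of this boundary term cancels against the corresponding boundary term coming from differentiating $\int_\gamma \log f\,(dg/g)$. This is exactly the content of the more general Proposition 3.1 in \cite{A}: the regulator current $\log f\,(dg/g) - \log g(O)\,(df/f)$ is chosen so that its fiberwise exterior derivative is $\operatorname{dlog}\{f,g\} = (df/f)\wedge(dg/g)$, and the base-point correction makes the $1$-current single-valued (its periods are well-defined modulo $\Q(2)$). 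Then, because $\gamma$ is flat, differentiating $\int_\gamma(\text{current})$ in $t$ amounts to taking the Lie derivative of the current along a lift of $\partial/\partial t$ and integrating; by Cartan's formula $\mathcal{L}_{\partial_t} = d\,\iota_{\partial_t} + \iota_{\partial_t}\,d$, and the $d\,\iota_{\partial_t}$ piece integrates to zero on the closed cycle $\gamma$, leaving $\int_\gamma \iota_{\partial_t}\bigl(\operatorname{dlog}(\xi)\bigr) = \int_\gamma \iota_{\partial_t}(dt \wedge \varphi) = \int_\gamma \varphi$.

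The step I expect to be the main obstacle is making the differentiation under the integral sign rigorous, i.e.\ justifying that one may pull $d/dt$ inside $\int_\gamma$ and that the boundary/base-point contributions genuinely cancel. This requires care because $\log f$ is multivalued and the cycle $\gamma$, though locally constant, must be represented by an actual chain whose support may meet the divisors of $f$ and $g$; one wants to choose representatives avoiding these loci, or argue by a limiting/deformation argument. However, since the statement is explicitly quoted as "a special case of \cite[Proposition 3.1]{A}", the honest approach here is to invoke that result directly: verify that the hypotheses of Asakura's proposition are met in our situation (namely that $U \to S$ is a smooth family of curves, $\xi \in H^2_{\mathscr{M}}(U,\Q(2))$, and $\operatorname{dlog}(\xi)$ has the stated form $dt \wedge \varphi$ with $\varphi$ a relative $1$-form), and then the formula $\frac{d}{dt}F_\gamma(t) = \int_\gamma \varphi$ is immediate. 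So the proof reduces to a citation plus a one-line check of hypotheses; the substantive work (the Cartan-formula manipulation and the single-valuedness of the regulator current) is precisely what \cite{A} supplies, and I would not reproduce it.
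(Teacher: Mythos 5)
Your proposal is correct and takes essentially the same route as the paper: the paper gives no proof of this lemma at all, simply attributing it to Asakura's Proposition 3.1, which is exactly the citation-plus-hypothesis-check you settle on. Your additional sketch (flatness of $\gamma$, Cartan's formula, cancellation of the base-point term) is a faithful outline of what the cited result supplies and introduces no errors.
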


\begin{proof}[Proof of Theorem \ref{thm:main}]
(i) Let $\gamma=A$. 
By Lemma \ref{prop:1}, Proposition \ref{ppn:1} and Theorem \ref{thm:1}, 
\begin{align*}
\frac{dF_A}{dt}&=-\frac{(1-\zeta)^3}{1-t^3} \int_A \omega_t
=\frac{9\zeta^2B_{\frac13}}{1-t^3}
{_{2}F_{1}}\left[ \left. 
\begin{matrix}
\frac13,\frac13 \\
\frac{2}{3}
\end{matrix}
\right| t^3
\right]
+\frac{9\zeta
B_{\frac23}
}{1-t^3}
t{_{2}F_{1}}\left[ \left. 
\begin{matrix}
\frac23,\frac23 \\
\frac{4}{3}
\end{matrix}
\right| t^3
\right].
\end{align*}
The primitive functions with constant term $0$ are  computed as 
\begin{align*}
\int \dfrac{1}{1-t^3}
{_{2}F_{1}}\left[ \left. 
\begin{matrix}
\frac13,\frac13 \\
\frac{2}{3}
\end{matrix}
\right| t^3
\right]
dt
&=
 \sum_{n, m \geq 0}
 \int \dfrac{(\frac13)_n(\frac13)_n}{(\frac23)_n} \dfrac{t^{3n+3m}}{n!}
 dt \\
 &=
  \sum_{n, m \geq 0}
 \dfrac{(\frac13)_n(\frac13)_n}{(\frac23)_n} \dfrac{1}{3n+3m+1} \dfrac{t^{3n+3m+1}}{n!} \\
 &=
  t \sum_{n, m \geq 0}
 \dfrac{(\frac13)_{n+m}}{(\frac{4}{3})_{n+m}} \dfrac{(\frac13)_n(\frac13)_n}{(\frac23)_n}  \dfrac{(1)_m}{m!n!} t^{3n+3m} \\
 &= 
t F_{1;1;0}^{1;2;1}\left[\left.
\begin{matrix}
\frac13 \\
\frac43
\end{matrix}
; 
\begin{matrix}
\frac13, \frac13\\
\frac23
\end{matrix}
;
\begin{matrix}
1 \\
-
\end{matrix}
\right| t^3, t^3
\right], 
\end{align*}

\begin{align*}
\int \dfrac{t}{1-t^3}
{_{2}F_{1}}\left[ \left. 
\begin{matrix}
\frac23,\frac23 \\
\frac{4}{3}
\end{matrix}
\right| t^3
\right]
dt
&=
 \sum_{n, m \geq 0}
 \int \dfrac{(\frac23)_n(\frac23)_n}{(\frac43)_n} \dfrac{t^{3n+3m+1}}{n!}
 dt \\
 &=
  \sum_{n, m \geq 0}
 \dfrac{(\frac23)_n(\frac23)_n}{(\frac43)_n} \dfrac{1}{3n+3m+2} \dfrac{t^{3n+3m+2}}{n!} \\
 &=
  \dfrac{t^2}2 \sum_{n, m \geq 0}
 \dfrac{(\frac23)_{n+m}}{(\frac{5}{3})_{n+m}} \dfrac{(\frac23)_n(\frac23)_n}{(\frac43)_n}  \dfrac{(1)_m}{m!n!} t^{3n+3m} \\
 &= 
\dfrac{t^2}2 F_{1;1;0}^{1;2;1}\left[\left.
\begin{matrix}
\frac23 \\
\frac53
\end{matrix}
; 
\begin{matrix}
\frac23, \frac23\\
\frac43
\end{matrix}
;
\begin{matrix}
1 \\
-
\end{matrix}
\right| t^3, t^3
\right].
\end{align*}
By the reduction formula \cite[(2.2)]{da}
$$(1-x)^aF_{1;1;0}^{1;2;1}\left[\left.
\begin{matrix}
a  \\
b
\end{matrix}
; 
\begin{matrix}
b-e, c\\
d
\end{matrix}
;
\begin{matrix}
e \\
-
\end{matrix}
\right| x, x
\right]
=
{_{3}F_{2}}\left[ \left. 
\begin{matrix}
a, b-e, d-c \\
b, d
\end{matrix}
\right| \frac{x}{x-1}
\right],
$$
we have
\begin{align*}
F_{1;1;0}^{1;2;1}\left[\left.
\begin{matrix}
\frac13 \\
\frac43
\end{matrix}
; 
\begin{matrix}
\frac13, \frac13\\
\frac23
\end{matrix}
;
\begin{matrix}
1 \\
-
\end{matrix}
\right| t^3, t^3
\right]
&=
\dfrac{1}{(1-t^3)^{\frac13}} {_{3}F_{2}}\left[ \left. 
\begin{matrix}
\frac13,\frac13,\frac13 \\
\frac{4}{3}, \frac{2}{3}
\end{matrix}
\right| \frac{t^3}{t^3-1}
\right], \\
 F_{1;1;0}^{1;2;1}\left[\left.
\begin{matrix}
\frac23 \\
\frac53
\end{matrix}
; 
\begin{matrix}
\frac23, \frac23\\
\frac43
\end{matrix}
;
\begin{matrix}
1 \\
-
\end{matrix}
\right| t^3, t^3
\right] 
&=
\dfrac{1}{(1-t^3)^{\frac23}} 
{_{3}F_{2}}\left[ \left. 
\begin{matrix}
\frac23,\frac23,\frac23 \\
\frac{5}{3}, \frac{4}{3}
\end{matrix}
\right| \frac{t^3}{t^3-1}
\right]. 
\end{align*}
It remains to show $F_A(0)=0$. 
It is known that the element 
\begin{align*}
\xi(\zeta)_0=
\left\{ -\frac{x+y}{x+ \zeta^2 y}, -\frac{x+ \zeta y}{x+y}  \right\} \in H_{\mathscr{M}}^2(X_0, \Q(2))
\end{align*}
is a pull-back of an element on $\mathbb{P}^1$, so 
has the trivial regulator \cite[proof of Theorem 3]{Ross}.
Hence the first equality of (i) is proved and the second can be proved similarly letting $\gamma=B$. 
The statement (ii) follows similarly except for the constant terms. 
Before determining the constant terms, we first recall some more results from the literature below. 
\end{proof}

To determine the constant terms, we apply a special case of Otsubo's result \cite{Otsubo}. 
Consider the Ross element
$$e=\{1+x, 1+y\} \in H^2_{\mathscr{M}}(X_0, \Q(2)).$$
Ross  \cite[Theorem 1]{Ross} proved that $r_{\D}(e)\neq 0$.
Note that Ross and Otsubo use the equation $x^3+y^3=1$, which is obviously isomorphic to our $X_0$ 
by the substitution $(x, y) \mapsto (-x, -y)$.
 The differential forms $\omega^{1,1}$ and $\omega^{2, 2}$ in \cite{Otsubo} agree our $\omega_0$ and $\eta_0$, respectively. 
\begin{thm}[{\cite[Theorem 4.14]{Otsubo}}]\label{thm:Otsubo}
For positive real numbers $\alpha, \beta$, we put
$$\widetilde{F}(\alpha, \beta)= \dfrac{\Gamma(\alpha)\Gamma(\beta)}{\Gamma(\alpha+\beta+1)}\sum_{m,n \geq 0} \dfrac{(\alpha)_m(\beta)_n}{(\alpha+\beta+1)_{m+n}}.$$
Then, 
for any $\gamma \in H_1(X_0(\C), \Q)$, 
$$r_{\D}(e) (\gamma)=-\dfrac{1}{3 }\left(B_{\frac13}^{-1}\widetilde{F}\left(\dfrac 13, \dfrac 13\right)
\int_{\gamma} \omega_0
+
B_{\frac23}^{-1}\widetilde{F}\left(\dfrac 23, \dfrac 23\right)
\int_{\gamma} \eta_0
\right)
.$$
\end{thm}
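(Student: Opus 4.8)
The plan is to determine the de Rham realisation of $r_{\mathscr D}(e)$ as a cohomology class on $X_0$ and then read off its pairing with an arbitrary cycle. Since $X_0$ is the Fermat cubic, an elliptic curve of genus $1$, and $\{[\omega_0],[\eta_0]\}$ is a basis of $H^1_{\mathrm{dR}}(X_0(\C))$ by Proposition \ref{prop:diff}, the de Rham class attached to $r_{\mathscr D}(e)$ can be written uniquely as $c_1[\omega_0]+c_2[\eta_0]$; pairing with a cycle then gives $r_{\mathscr D}(e)(\gamma)=c_1\int_\gamma\omega_0+c_2\int_\gamma\eta_0$ modulo $\Q(2)$ for every $\gamma$, so the theorem is equivalent to the two identities $c_1=-\tfrac13 B_{\frac13}^{-1}\widetilde F(\tfrac13,\tfrac13)$ and $c_2=-\tfrac13 B_{\frac23}^{-1}\widetilde F(\tfrac23,\tfrac23)$.

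The first step is to express $c_1,c_2$ as integrals over the real surface $X_0(\C)$. By Beilinson's formula (quoted above) the class $r_{\mathscr D}(e)$ is represented by a closed $1$-current $\eta_{\mathrm{Be}}$ on $X_0(\C)$ built from $\log(1+x)$, $\log(1+y)$ and their logarithmic differentials (it is closed because the product $\frac{d(1+x)}{1+x}\wedge\frac{d(1+y)}{1+y}$ vanishes identically on the curve $X_0$, and because the tame symbols of $e$ vanish by Ross's construction). Pairing against the cup product and using $\int_{X_0(\C)}\omega_0\wedge\eta_0=-6\pi i$, which follows from $[\omega_t]\cup[\eta_t]=-3$ in the proof of Proposition \ref{prop:diff}(i), one obtains
\begin{align*}
c_1=-\frac{1}{6\pi i}\int_{X_0(\C)}\eta_{\mathrm{Be}}\wedge\eta_0,\qquad c_2=\frac{1}{6\pi i}\int_{X_0(\C)}\eta_{\mathrm{Be}}\wedge\omega_0 .
\end{align*}

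The heart of the matter is the evaluation of these two surface integrals. Splitting each logarithm as $\log|h|=\tfrac12(\log h+\overline{\log h})$ and writing $d\arg=\operatorname{Im}(d\log)$, wedging against the $(1,0)$-forms $\omega_0,\eta_0$ kills the holomorphic contributions and leaves integrals of the shape $\int_{X_0(\C)}\log(1+x)\,\tfrac{d\overline{1+y}}{\overline{1+y}}\wedge\omega_0$ (together with the companions obtained by exchanging $x\leftrightarrow y$ and $\omega_0\leftrightarrow\eta_0$). After cutting $X_0(\C)$ along real arcs to fix single-valued branches of $\log(1+x)$ and $\log(1+y)$ and applying Stokes, the surviving contributions localise along the cuts and near the points where $1+x$ or $1+y$ has a zero or a pole; there one expands $\log(1+x)=-\sum_{k\ge1}(-x)^k/k$ and integrates term by term. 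The resulting double series is converted into $\widetilde F(\alpha,\beta)$ by means of the Dirichlet beta integral $\int_{\{u,v\ge0,\,u+v\le1\}}u^{\alpha-1}v^{\beta-1}\,du\,dv=\Gamma(\alpha)\Gamma(\beta)/\Gamma(\alpha+\beta+1)$, the exponents being $\alpha=\beta=\tfrac13$ in the $\eta_0$-pairing and $\alpha=\beta=\tfrac23$ in the $\omega_0$-pairing --- the weights $\tfrac13$ and $\tfrac23$ being exactly those of $\omega_0$ and $\eta_0$ under the order-$3$ automorphism of the Fermat cubic coming from complex multiplication. Keeping track of the $\Gamma$-factors and the powers of $2\pi i$ then yields the stated values of $c_1$ and $c_2$.

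The main obstacle is precisely this last step: one has to choose the branch cuts so that Stokes' theorem genuinely applies to the only conditionally convergent, logarithmically singular integrand near $x=-1$, $y=-1$ and the points at infinity of $X_0$, and then match the real integrals obtained with the simplex representation of $\widetilde F$ \emph{on the nose}, including the overall factor $-\tfrac13$ and the beta-function normalisations $B_{\frac13}^{-1}$, $B_{\frac23}^{-1}$. An alternative is to realise $e$ as the pullback of a symbol on $\mathbb P^1\setminus\{0,1,\infty\}$ along the cyclic triple cover $x\mapsto x^3$ and to reduce to a classical dilogarithm integral, but the bookkeeping of constants is equally delicate along that route.
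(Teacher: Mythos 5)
Note first that the paper does not prove this statement at all: it is imported verbatim from Otsubo \cite[Theorem 4.14]{Otsubo}, so there is no internal proof to measure yours against. Judged on its own terms, your proposal is a plan rather than a proof, and the gap is exactly where you locate it yourself. The reduction to two constants $c_1,c_2$ via the cup product and $\int_{X_0(\C)}\omega_0\wedge\eta_0=-6\pi i$ is sound, but everything that actually produces the numbers $-\tfrac13 B_{\frac13}^{-1}\widetilde F(\tfrac13,\tfrac13)$ and $-\tfrac13 B_{\frac23}^{-1}\widetilde F(\tfrac23,\tfrac23)$ --- the choice of cuts, the Stokes argument, the localisation of the boundary terms, the term-by-term integration, and the identification with the simplex integral --- is described only in outline, and you concede that matching the constants ``on the nose'' is the main obstacle. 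For an identity whose entire content is a pair of explicit constants, leaving their evaluation unexecuted is not a completable detail but the whole theorem.

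There is also a concrete analytic problem with the route you chose. The surviving $(1,1)$-part of $\eta_{\mathrm{Be}}\wedge\eta_0$ contains terms of the form $\log(1+x)\,\overline{d\log(1+y)}\wedge\eta_0$, and $\eta_0$ has double poles at the three points at infinity $R_\zeta=[1:-\zeta:0]$ (this is exactly how $[\omega_0]\cup[\eta_0]=-3$ is computed in Proposition \ref{prop:diff}). Near such a point the integrand behaves like $\frac{d\bar u}{\bar u}\wedge\frac{du}{u^2}$, which is not locally integrable; the singularity is worse than logarithmic, so ``Stokes plus localisation along the cuts'' does not apply as stated and some regularisation (or a cohomologous representative of $\eta_0$ supported away from infinity) is required. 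Otsubo's actual argument sidesteps the surface integral entirely: he pairs the Beilinson $1$-form directly against explicit generators of $H_1$ of the Fermat curve, expands term by term on the real membrane $x^3=u$, $y^3=1-u$, $u\in[0,1]$, and recognises Appell's $F_3$ at $(1,1)$, which is precisely where $\widetilde F$ comes from. To complete your proof you would need either to carry out that line-integral computation or to supply the missing regularisation of your surface integrals; at present neither is done.
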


\begin{rmk}[cf.\ {\cite[4.9, 4.10]{Otsubo}}] \label{rem}
The value $\widetilde{F}(\alpha, \beta)$ is written in terms of Appell's two-variable hypergeometric function $F_3$. 
We have also 
$$\widetilde{F}(\alpha, \beta)=\dfrac1{\alpha}\dfrac{\Gamma(\alpha)\Gamma(\beta)}{\Gamma(\alpha+\beta)}{_{3}F_{2}}\left[ \left. 
\begin{matrix}
\alpha, \alpha, 1 \\
\alpha+1, \alpha+\beta
\end{matrix}
\right| 1
\right].$$
\end{rmk}
Now, 
let $g$ be an automorphism of $X_0$ defined by
$g(x, y)=(\zeta^2x, \zeta^2 y). $
Then we have $g^*\omega_0=\zeta \omega_0$, $g^*\eta_0=\zeta^2 \eta_0$.

\begin{lem} \label{Ross}
We have for $i \in \Z/3\Z$, 
$$\xi(\rho \zeta^i)_0=-(g^i)^{*}(1+\rho+\rho^{2})^*(e).$$
Moreover, 
$$r_{\D}(\xi(\rho \zeta^i)_0)=-3(g^i)^{*}r_{\D}(e).$$
\end{lem}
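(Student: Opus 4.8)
The statement consists of an identity of symbols in $H^2_{\mathscr{M}}(X_0,\Q(2))$ together with its image under $r_{\D}$; the plan is to establish the symbol identity, reducing it first to the case $i=0$, and then deduce the regulator identity by functoriality and Theorem \ref{thm:Otsubo}. For the reduction, note that $g$ merely rescales the coordinate $z_0$, while $O=[-1:1:0]$ lies on $\{z_0=0\}$, so $g(O)=O$; hence $g$ is a group automorphism of the elliptic curve $(X_0,O)$, and a short computation in the automorphism group shows $g^{-i}\rho g^{i}=\rho\zeta^{i}$ (possibly $\rho\zeta^{-i}$, depending on orientation conventions; the two cases differ only by renaming $i\mapsto -i$). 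Combined with the transformation rule $\phi^{*}\{P_1,P_2,P_3\}=\{\phi^{-1}(P_1),\phi^{-1}(P_2),\phi^{-1}(P_3)\}$ for the symbols of Proposition \ref{const: symbol} — already used in the proof of Proposition \ref{lemm:action} — this gives $(g^{i})^{*}\xi(\rho)_0=\xi(\rho\zeta^{i})_0$, so it suffices to prove $\xi(\rho)_0=-(1+\rho+\rho^{2})^{*}(e)$ and then apply $(g^{i})^{*}$.

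Both $\xi(\rho)_0$ and $(1+\rho+\rho^{2})^{*}(e)$ are defined over $\Q$, and $K_2(\Q)\otimes\Q=0$, so by Theorem \ref{thm: inj} over $\Q$ the Bloch map $\beta$ is injective on $H^2_{\mathscr{M}}(X_{0,\Q},\Q(2))$; hence it suffices to check the equality after applying $\beta$ (the identity over $\Q$ then gives the one over $\C$ by base change). I would carry this out exactly as in the proof of Proposition \ref{lemm:action}: the divisors of $1+x$ and $1+y$ are read off from the flex lines $x_0+z_0=0$ and $y_0+z_0=0$, each of which meets $X_0$ triply at one flex point and meets $\{z_0=0\}$ at $O,\zeta(O),\zeta^{2}(O)$, while the divisors of the two functions representing $\xi(\rho)_0$ (written explicitly in Definition \ref{def:1}) are obtained the same way; all of these are supported on the $3$-torsion points. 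Substituting them into the definition of $\beta$ and evaluating the resulting differences of $3$-torsion points by the group law of \cite{Kaneko} (using, e.g., $-[x_0:y_0:z_0]=[y_0:x_0:z_0]$) expresses both $\beta(\xi(\rho)_0)$ and $\beta((1+\rho+\rho^{2})^{*}(e))$ as explicit integral combinations of flex points; comparing these gives $\beta(\xi(\rho)_0)=-\beta((1+\rho+\rho^{2})^{*}(e))$, hence the first assertion.

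For the second assertion, apply $r_{\D}$ to the symbol identity. By additivity of $r_{\D}$ and its compatibility with the pullbacks by the automorphisms $g$ and $\rho$ of $X_0$, $r_{\D}(\xi(\rho\zeta^{i})_0)=-(g^{i})^{*}\bigl(1+\rho^{*}+(\rho^{2})^{*}\bigr)r_{\D}(e)$. Now Theorem \ref{thm:Otsubo} expresses $r_{\D}(e)(\gamma)$ solely in terms of $\int_{\gamma}\omega_0$ and $\int_{\gamma}\eta_0$, and since $\rho^{*}\omega_0=\omega_0$ and $\rho^{*}[\eta_0]=[\eta_0]$ by Proposition \ref{prop:diff} (ii), both period functions are invariant under $\gamma\mapsto\rho_{*}\gamma$; hence $\rho^{*}r_{\D}(e)=r_{\D}(e)$ and $\bigl(1+\rho^{*}+(\rho^{2})^{*}\bigr)r_{\D}(e)=3\,r_{\D}(e)$. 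Substituting, $r_{\D}(\xi(\rho\zeta^{i})_0)=-3(g^{i})^{*}r_{\D}(e)$.

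The one genuinely laborious step is the Bloch-map computation in the second paragraph: identifying each $3$-torsion point that appears as a difference of two flex points, and keeping the sign conventions — the orientations of the $\rho$- and $g$-actions and the permutation signature in Proposition \ref{const: symbol} (ii) — mutually consistent. Once those are pinned down, both sides collapse to the same integral combination of flex points and the rest is formal.
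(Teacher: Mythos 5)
Your reduction to $i=0$ is sound: $g$ fixes $O=[-1:1:0]$, conjugation gives $g^{-i}\rho g^{i}=\rho\zeta^{i}$, and the symbol transforms as you say, so $(g^{i})^{*}\xi(\rho)_0=\xi(\rho\zeta^{i})_0$. Your derivation of the second assertion from the first (via $\rho^{*}\omega_0=\omega_0$, $\rho^{*}[\eta_0]=[\eta_0]$) is also exactly the paper's argument. The problem is the middle step. The paper does \emph{not} go through the Bloch map here: it proves the symbol identity directly in $K_2$, expanding $\rho^{*}(e)$ and $\rho^{2*}(e)$ by bilinearity and discarding terms such as $\{1+x,x\}$, $\{x,x\}$ and $\{x+y,x\}+\{y,x+y\}$, which are Steinberg or torsion (the last using $x^3+y^3=-1$), until what remains is literally $-\xi(\rho\zeta^{i})_0$.

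The gap in your route is the claim that ``both sides collapse to the same integral combination of flex points.'' They do not. Take $P=\rho(O)$ and $Q=\zeta(O)$ as generators of $X_0[3]$; since $\rho$ acts on $X_0$ as translation by $P$, one has $\beta(\rho^{*}\xi)=\beta(\xi)$ for every symbol, hence $\beta\bigl((1+\rho+\rho^2)^{*}(e)\bigr)=3\beta(e)$. Computing from $\operatorname{div}(1+x)=3(P)-(O)-(Q)-(2Q)$ and $\operatorname{div}(1+y)=3(2P)-(O)-(Q)-(2Q)$ gives
$\beta(e)=9(2P)-6(P)-6(P+Q)-6(P+2Q)+3(O)+3(Q)+3(2Q)$, whereas $\beta(\xi(\rho)_0)=9\bigl(2(2P)-(P)-(O)\bigr)$. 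Thus $\beta(\xi(\rho)_0)+3\beta(e)=45(2P)-27(P)-18(P+Q)-18(P+2Q)+9(Q)+9(2Q)$, a visibly nonzero element of $\Z[X_0(\overline{\Q})]$ (and no choice of sign conventions makes it vanish). The two $\beta$-images agree only modulo $R_3^{*}(X_0)\otimes\Q$: the discrepancy is precisely the $\beta$-image of the torsion and Steinberg symbols that the paper's direct manipulation discards. To finish your argument you would have to exhibit explicit functions $f$ with $\{f,1-f\}$ accounting for that divisor, which is in effect the paper's symbol computation done in reverse — so the ``laborious but routine'' group-law comparison you defer is not where the difficulty lies, and as described the comparison step fails. (This is also why the paper's own Bloch-map argument in Proposition \ref{prop:element} passes to the further quotient $B_3$ by the relations $(R)+(-R)$, where such discrepancies collapse; that quotient is not available in Theorem \ref{thm: inj}, which uses $B_3^{*}$.)
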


\begin{proof}
By applying $(g^i)^*\rho^*$ and $(g^i)^*\rho^{2*}$ to $\{1+x, 1+y\}$, we have up to torsion
\begin{align*}
(g^i)^*\rho^*(e)&=(g^i)^{*}\left\{\frac{x+y}{x}, \frac{1+x}x\right\} =\left\{\frac{x+y}{x}, \frac{1+\zeta^{2i}x}{\zeta^{2i}x}\right\} \\
&= \{x+y, 1+\zeta^{2i} x\}  - \{ x+y, x\}, \\
(g^i)^*\rho^{2*} (e)&=(g^i)^*\left\{ \frac{1+y}{y}, \frac{x+y}{y}\right\}=\left\{ \frac{1+\zeta^{2i}y}{\zeta^{2i}y}, \frac{x+y}{y}\right\} \\
&= \{1+\zeta^{2i}y, x+y\} -\{y, x+y\}.
\end{align*}
Here we used $\{f, -f\}=0$ and that $\{z, f\}$ is torsion for a root of unity $z$.
We have 
\begin{align*}
&\{x+y, x\}+\{y, x+y\}=\{x+y, x\}-\{x+y, y\} \\
&=\left\{\left(1+\frac{x}{y} \right)y, \frac{x}{y}\right\} 
=\left\{ 1+\frac{x}{y}, \frac{x}{y} \right\} + \{ y, x\}- \{y, y\}
\end{align*}
is also torsion since $x^3+y^3=-1$. Hence we obtain
\begin{align*}
&(g^i)^*(1+\rho^{\ast}+\rho^{2\ast})(e) \\
&=\{1+\zeta^{2i} x, 1+\zeta^{2i} y\}-\{1+\zeta^{2i}x, x+y\}-\{x+y, 1+\zeta^{2i}y\}\\
&=\left\{\frac{1+\zeta^{2i}x}{x+y}, \frac{1+\zeta^{2i}y}{x+y} \right\}
 =-\xi(\rho \zeta^i)_0
\end{align*}
up to torsion. 
Since $\rho^{\ast}$ acts trivially on $\operatorname{Im} (r_{\D})$ by Proposition \ref{prop:diff} (ii), the second assertion follows. 
\end{proof}

\begin{proof}[Proof of Theorem \ref{thm:main} (continued)]
By Lemma \ref{Ross}, Theorem \ref{thm:Otsubo} and Theorem \ref{thm:1}, 
we have
\begin{align*}
r_{\D} (\xi(\rho \zeta^i)_0)(A)
&= B_{\frac13}^{-1} \widetilde{F}\left(\dfrac 13, \dfrac 13\right) \int_A(g^i)^*\omega_0 +  B_{\frac23}^{-1} \widetilde{F}\left(\dfrac 23, \dfrac 23\right) \int_A(g^i)^*\eta_0\\
&=-(1-\zeta)\zeta^i\widetilde{F}\left(\dfrac 13, \dfrac 13\right) - (1-\zeta^2)\zeta^{2i} \widetilde{F}\left(\dfrac 23, \dfrac 23\right).
\end{align*}
Rewriting $\widetilde{F}$ by ${}_3F_2(1)$ using Remark \ref{rem}, we obtain the first equality of (ii). 
 The second equality follows similarly. 
 \end{proof}

\begin{rmk}
Olsson's hypergeometric function (cf. \cite{Olsson}) is defined by
\begin{align*}
F_P(a, b_1, b_2, c_1, c_2 ; x, y)=\sum_{i, j=0}^{\infty}\dfrac{(a)_{i+j}(a-c_2+1)_i(b_1)_i(b_2)_j}{(a+b_2-c_2+1)_{i+j}(c_1)_i i! j!} x^i (1-y)^j.
\end{align*}
If we put $F_P(a, b, x)=F_P\left(a, a, 1, b, 1 ; x, 1-x\right)$,
then 
\begin{align*}
F_{1;1;0}^{1;2;1}\left[\left.
\begin{matrix}
a  \\
a+1
\end{matrix}
; 
\begin{matrix}
a, a \\
b
\end{matrix}
;
\begin{matrix}
1 \\
-
\end{matrix}
\right| x, x
\right] 
= F_P(a, b, x).
\end{align*}
Therefore 
\begin{align*}
& r_{\D}(\xi(\zeta)_t)(A)=
9\zeta^2
B_{\frac13} 
tF_P\left(\frac13, \frac23; t^3\right)
+\dfrac{9}{2}\zeta
B_{\frac23}
t^2F_P\left(\frac23, \frac43; t^3\right), \\
& r_{\D}(\xi(\zeta)_t)(B)=
3(\zeta-\zeta^2) 
B_{\frac13}tF_P\left(\frac13, \frac23; t^3\right)
-\dfrac{3}{2} (\zeta-\zeta^2)B_{\frac23}t^2F_P\left(\frac23, \frac43; t^3\right).
\end{align*}
\end{rmk}

\subsection{Numerical verification of the Bloch-Beilinson conjecture}
Let $E$ be an elliptic curve over $\Q$ and put $E_{\C}=E \times_{\operatorname{Spec}\Q} \operatorname{Spec}\C$. 
Let 
\begin{align*}r_{\D, \C} \colon H^2_{\mathscr{M}}(E_{\C}, \Q(2)) \to H^2_{\mathscr{D}}(E_{\C}, \R(2))& = \operatorname{Hom}(H_1(E(\C), \Q), \C/\R(2)) \\
&= \operatorname{Hom}(H_1(E(\C), \Q), \R(1))
\end{align*}
be the regulator map to the Deligne cohomology with $\R$-coefficients (cf. \cite{Nekovar}). 
Here $\R(n)=(2\pi i)^n \R$ and we used the identification $\C/\R(2)=\R(1)$ taking the imaginary part.  
It induces the regulator map for $E$
$$r_{\D, \Q} \colon H^2_{\mathscr{M}}(E, \Q(2)) \to H^2_{\mathscr{D}}(E_{\C}, \R(2))^+ = \operatorname{Hom}(H_1(E(\C), \Q)^-, \R(1)).$$
Here 
$\pm$ denotes the $(\pm 1)$-eigenspace with respect to the simultaneous action of the complex conjugation $F_{\infty}$ (infinite Frobenius) 
acting on $E(\C)$ and the complex conjugation $c_{\infty}$ acting on the coefficients.

 Let $\mathscr{E}$ be the proper flat regular minimal model of $E$ over $\Z$. The integral part of $H^2_{\mathscr{M}}(E, \Q(2))$ is defined by 
$$H^2_{\mathscr{M}}(E, \Q(2))_{\Z}=\operatorname{Im} (K_2(\mathscr{E}) \otimes {\Q} \to H^2_{\mathscr{M}}(E, \Q(2))).$$
Then we have the localization exact sequence
$$\cdots \to K_2(\mathscr{E})\otimes \Q \to K_2(E) \otimes \Q \xrightarrow{\partial=(\partial_p)} \bigoplus_{p: \text{prime}} K_1^{\prime}(\mathscr{E}_p)\otimes \Q \to \cdots, $$
where  
$\mathscr{E}_p$ is the special fiber of $\mathscr{E}$ at $p$ and 
$K^{\prime}(-) $ is the algebraic $K$-group of coherent sheaves. 
Therefore $\xi \in H^2_{\mathscr{M}}(E, \Q(2))$ is integral if and only if $\xi \in \operatorname{Ker} (\partial)$.

\begin{conj}[Bloch \cite{Bloch}, Beilinson \cite{Beilinson}] \label{Beilinson}
Let $L(E, s)$ be the $L$-function of $E$. 
Then there is a $\xi \in H^2_{\mathscr{M}}(E, \Q(2))_{\Z}$ such that 
$$ \cfrac{1}{2\pi i}r_{\D, \Q}(\xi)(\gamma )  \equiv \frac1{\pi^2}L(E, 2) \quad \pmod{\Q^{*}}$$
for any non-trivial $\gamma \in H_1(E(\C), \Q)^{-}$. 
\end{conj}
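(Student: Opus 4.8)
The statement is the Beilinson conjecture (refined by Bloch) for the motive $h^1(E)(2)$, asserting the existence of an integral class in $H^2_{\mathscr{M}}(E,\Q(2))_{\Z}$ whose Deligne regulator against a generator of the one-dimensional space $H_1(E(\C),\Q)^-$ recovers $\frac{1}{\pi^2}L(E,2)$ up to a nonzero rational. \emph{In this weak form} --- only modulo $\Q^*$, and asserting mere existence rather than that $\xi$ generates $H^2_{\mathscr{M}}(E,\Q(2))_{\Z}$ --- it is in fact provable for every $E/\Q$, by combining the modularity theorem with Beilinson's computation of regulators on modular curves and Scholl's integrality result; the genuinely open parts (the predicted equality $\dim H^2_{\mathscr{M}}(E,\Q(2))_{\Z}=1$ and the \emph{exact} rational factor) are not asserted here. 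The plan follows this modular route, with a direct hypergeometric alternative for the Hesse fibres.

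First I would reduce to modular curves. By the modularity theorem (Wiles, Taylor--Wiles, Breuil--Conrad--Diamond--Taylor), $E$ is, up to isogeny, a quotient of $J_0(N)$, giving a nonconstant $\pi \colon X_0(N) \to E$ over $\Q$. Beilinson \cite{Beilinson} constructed explicit Eisenstein elements in $H^2_{\mathscr{M}}(X_0(N),\Q(2))$ from modular (Siegel) units supported on the cusps, and computed their regulators as $L(f,2)$ for the weight-two newform $f$ attached to $E$, up to a factor in $\Q^*$ and a power of $2\pi i$. Pushing such a class forward along $\pi$ and using functoriality of $r_{\D,\Q}$, I obtain $\xi_E \in H^2_{\mathscr{M}}(E,\Q(2))$ whose pairing with $\gamma \in H_1(E(\C),\Q)^-$ equals $\frac{1}{\pi^2}L(E,2)$ modulo $\Q^*$, as desired; the equality $L(f,2)=L(E,2)$ up to the Manin constant and the parametrisation degree contributes only rational factors. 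Since $2$ lies strictly to the right of the centre of the functional equation, $L(E,2)\neq 0$, so the regulator is nonzero and $\xi_E$ is non-trivial.

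Next I would secure integrality, i.e.\ that $\xi_E$ can be taken in $\operatorname{Ker}(\partial)$. This is provided by Scholl's theorem that the motivic Eisenstein symbol extends to an integral class, so that the boundary maps $\partial_p$ in $K'$-theory vanish. An alternative, closer to the method of the present paper, is to compute these boundary maps explicitly from the tame symbols of the chosen element, exactly as is done for $\xi_{\mathrm{Hes},t}$ in Theorem \ref{Hesse symbol}. In the complex-multiplication case there is a cleaner path: Deninger \cite{Deninger} and Rodriguez Villegas \cite{Rod} evaluate the relevant regulator through Kronecker--Eisenstein series and elliptic units, objects that are manifestly integral and relate directly to $L(E,2)$.

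The hard part will be the bookkeeping that turns ``up to $\Q^*$ and a power of $2\pi i$'' into the precise congruence stated --- tracking how periods, the Manin constant, and the parametrisation degree combine under pushforward --- together with the integrality verification; pinning the \emph{value} of the rational factor (rather than its existence) would require the Bloch--Kato/Tamagawa-number refinement and lies beyond this regulator computation. For the Hesse family I would instead exploit Theorem \ref{thm:main}: the regulator of $\xi_{\mathrm{Hes},t}$ is expressed through ${}_3F_2(1)$-values and Kamp{\'e} de F{\'e}riet functions, and in the CM fibre $t=-2$ --- and at $t=2/3$ via Samart \cite[Theorem 2]{Samart} --- this expression can be matched rigorously against the CM evaluation of $L(X_t,2)$, while for the remaining $3t\in\Z$ the ratios $Q_t$ of Theorem \ref{num} are computed to high precision, giving the numerical confirmation recorded there.
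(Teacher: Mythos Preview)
The paper offers no proof: the statement is labeled a Conjecture, and the paper's contribution is numerical verification for the Hesse curves with $3t\in\Z$ (Theorems \ref{num}, \ref{element2}, \ref{quad}), together with the remark in Section 4.5 that it becomes a theorem for those $X_t$ that happen to be isogenous to a genus-one $X_0(N)$. Your proposal is more ambitious and, as an outline, correct: the weak form recorded here (mere existence of an integral class, equality only modulo $\Q^*$) is indeed a theorem for every $E/\Q$, via Beilinson's Eisenstein-symbol computation on modular curves, the modularity theorem, and the Schappacher--Scholl integrality of the Eisenstein symbol. This is well known to experts, though the paper elects not to invoke it in that generality and keeps Conjecture \ref{Beilinson} as the target of its numerical experiments. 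Two small points: the reference \cite{Deninger} in this paper is Deninger's 1997 Mahler-measure article rather than his 1989 work on CM regulators that your CM paragraph seems to have in mind; and you should make explicit that integrality survives pushforward along $X_0(N)\to E$ because the modular parametrisation extends to a morphism of proper regular models over $\Z$, a step that is part of Scholl's argument but deserves a sentence. Your closing remarks on the Hesse fibres simply restate the paper's own numerical programme.
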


More generally, 
let $E$ be an elliptic curve over a number field $K$ and 
$\mathscr{E}$ be the proper flat regular minimal model of $E$ over the integer ring $\mathscr{O}_K$. 
For a finite place $v$ of $K$,  
we consider the boundary map
$$\partial_v \colon K_2(E) \otimes \Q \to K_1^{\prime}(\mathscr{E}_{v}) \otimes \Q .$$
It is known that 
\begin{align} \label{rmk1}
K_1^{\prime}(\mathscr{E}_v) \otimes  \Q \cong  
\left\{
\begin{array}{ll}
\Q & (v \colon \mbox{split\ multiplicative\ reduction})\\
0 & (\mbox{otherwise})
\end{array}
\right.
\end{align}
 (cf. \cite[Section 3]{Scholl}). 
 In the first case, $\mathscr{E}_v$ is a N\'eron $N$-gon for some $N$ and we write
$$\mathscr{E}_{v} =\bigcup_{\nu \in \Z/N\Z}C_{\nu}  \quad (\text{$C_{\nu} \cong \mathbb{P}^1$ if $N >1$})$$
where 
$C_0$ is the identity component and 
$C_0, C_1, \ldots, C_{N-1}$
are numbered consecutively.  
Let $\sum_j \{f_j, f^{\prime}_j\}$ be a symbol in $K_2(E)$. 
Let $D_j$ (resp. $D^{\prime}_{j}$) be the flat extension of $\operatorname{div}(f_j)$ (resp. $\operatorname{div}(f_j^{\prime})$) to $\mathscr{E}$ and 
put
$d_j(\nu) = \operatorname{deg}(D_j \cap C_{\nu})$ (resp. $d_j^{\prime}(\nu) = \operatorname{deg}(D^{\prime}_j \cap C_{\nu})$).

We will use the following formula of Schappacher and Scholl. 
\begin{ppn}[{\cite[Proposition 3.2, Erratum]{Scholl}}] \label{boundary}
Suppose that 
the closure of the supports of $D_j$, $D^{\prime}_j$ is contained in the smooth part of $\mathscr{E}$.
 Then we have
$$\partial_v\left(\sum_j\{f_j, f^{\prime}_j\}\right)=\pm\dfrac{1}{3N}\sum_{\mu, \nu \in \Z/N\Z} \sum_j d_j(\mu) d_j^{\prime}(\mu+\nu) {\bf B}_3\left(\left\langle \dfrac{\nu}{N} \right\rangle \right)\cdot \Phi_1^1.$$
Here, 
$0 \leq \langle x \rangle <1$ is the representative of $x \in \Q/\Z$, 
$${\bf{B}}_3(X)=X^3-\frac32 X^2+\frac12 X$$
is the third Bernoulli polynomial, and 
$\Phi_1^1$ 
is a basis of $K^{\prime}_1(\mathscr{E}_{v}) \otimes \Q$  given by the coordinate function of $C_0$.
\end{ppn}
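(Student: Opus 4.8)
This is the boundary computation of Schappacher and Scholl; here is how I would prove it. The plan is to identify $\partial_v$, via the localization sequence and the coniveau (Gersten--Quillen) spectral sequence of the regular model $\mathscr{E}$, with a tame-symbol computation along the components $C_\nu$ of $\mathscr{E}_v$, and then to reduce everything to a combinatorial identity for periodic Bernoulli polynomials on $\Z/N\Z$. Since $\xi:=\sum_j\{f_j,f_j'\}\in K_2(E)$, its tame symbols at the horizontal prime divisors of $\mathscr{E}$ vanish, so a standard analysis of the localization sequence shows that $\partial_v(\xi)$ is represented by the family $\bigl(T_{C_\nu}(\xi)\bigr)_{\nu\in\Z/N\Z}$ of rational functions on the components $C_\nu\cong\mathbb{P}^1$. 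The hypothesis that the supports of $D_j,D_j'$ lie in the smooth locus of $\mathscr{E}$, together with $d_1\circ d_1=0$ in the coniveau complex of $\mathscr{E}$, forces each $T_{C_\nu}(\xi)$ to have divisor supported only on the two nodes $p_{\nu-1},p_\nu$ of $C_\nu$; the family is then a cocycle for the niveau complex of $\mathscr{E}_v$, and the ``no-smooth-zeros'' and node-cancellation conditions force the order of $T_{C_\nu}(\xi)$ at $p_\nu$ to be a single integer $k$ independent of $\nu$. Thus $\partial_v(\xi)=k\cdot\Phi_1^1$ in $K_1'(\mathscr{E}_v)\otimes\Q\cong\Q$ (the torsion part $H^0(\mathscr{E}_v,\mathcal{K}_1)$ dies after $\otimes\Q$).

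Next I would make the vertical multiplicities explicit. Put $a_j(\nu)=\operatorname{ord}_{C_\nu}(f_j)$, $b_j(\nu)=\operatorname{ord}_{C_\nu}(f_j')$. Principality of $\operatorname{div}_{\mathscr{E}}(f_j)=D_j+\sum_\nu a_j(\nu)C_\nu+(\text{other fibres})$ together with the intersection form of the N\'eron $N$-gon ($C_\nu^2=-2$, $C_\nu\cdot C_{\nu\pm1}=1$) gives the discrete Poisson equation $d_j(\nu)=2a_j(\nu)-a_j(\nu-1)-a_j(\nu+1)$ on $\Z/N\Z$, and likewise for $b_j,d_j'$; since $\sum_\nu d_j(\nu)=\deg\operatorname{div}(f_j)=0$ it is solved by $a_j=G\ast d_j$ up to an (irrelevant) additive constant, where $G$ is the Green's function of the discrete Laplacian on the $N$-cycle, and a direct check gives $G(\nu)=\tfrac{N}{2}\,\mathbf{B}_2(\langle\nu/N\rangle)$ modulo a constant, $\mathbf{B}_2(X)=X^2-X+\tfrac16$. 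A local computation at a node (using that $\mathscr{E}$ is regular, hence \'etale-locally $st=\pi_v$ there, and that $D_j$ misses the node) gives $\operatorname{ord}_{p_\nu}\bigl(T_{C_\nu}(\{f_j,f_j'\})\bigr)=b_j(\nu)\bigl(a_j(\nu{+}1)-a_j(\nu)\bigr)-a_j(\nu)\bigl(b_j(\nu{+}1)-b_j(\nu)\bigr)$; summing over $j$ and averaging the (constant in $\nu$) result gives $Nk=\sum_j\sum_\nu b_j(\nu)\bigl(a_j(\nu{+}1)-a_j(\nu{-}1)\bigr)$.

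Substituting $a_j=G\ast d_j$ and $b_j=G\ast d_j'$ (the additive constants drop, since $\sum_\nu(a_j(\nu{+}1)-a_j(\nu{-}1))=0$ and $\sum_\nu d_j=0$) collapses this to $Nk=\sum_j\sum_{\mu,\sigma}d_j(\mu)\,d_j'(\sigma)\,\phi(\mu-\sigma)$, where $\phi(\delta)=\sum_\tau G(\tau+\delta)(\nabla G)(\tau)$ and $(\nabla G)(\sigma)=G(\sigma{+}1)-G(\sigma{-}1)$. It remains to prove the Bernoulli identity $\phi(\delta)=\mp\tfrac13\,\mathbf{B}_3(\langle\delta/N\rangle)$, $\mathbf{B}_3(X)=X^3-\tfrac32X^2+\tfrac12X$, which I would do by finite Fourier analysis on $\Z/N\Z$: the Laplacian has eigenvalue $-4\sin^2(\pi\ell/N)$ on the $\ell$-th character, so $\widehat{G}(\ell)\propto\sin^{-2}(\pi\ell/N)$, $\widehat{\nabla G}(\ell)\propto\sin^{-2}(\pi\ell/N)\sin(2\pi\ell/N)$, hence $\widehat{\phi}(\ell)\propto\cos(\pi\ell/N)\sin^{-3}(\pi\ell/N)$, which is, up to one universal constant, the transform of $\mathbf{B}_3(\langle\cdot/N\rangle)$; the single remaining mean-zero coefficient matches. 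Using $\mathbf{B}_3(\langle-\delta/N\rangle)=-\mathbf{B}_3(\langle\delta/N\rangle)$ and reindexing $\sigma=\mu+\nu$ then yields $k=\pm\tfrac{1}{3N}\sum_{\mu,\nu}\sum_j d_j(\mu)d_j'(\mu+\nu)\mathbf{B}_3(\langle\nu/N\rangle)$, as stated.

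The main obstacle is the overall sign $\pm$ (whose delicacy is acknowledged by the cited Erratum): one must fix consistently the orientation of the $N$-gon, which branch of the coordinate on $C_\nu$ vanishes at $p_\nu$ rather than at $p_{\nu-1}$, the sign conventions in the tame symbol and in the connecting homomorphism of the localization sequence, and the normalization $\Phi_1^1\mapsto1$ of $K_1'(\mathscr{E}_v)\otimes\Q\cong\Q$. The cleanest way to settle this is to evaluate both sides on one explicit test case --- e.g.\ a symbol whose divisors are supported at rational $N$-torsion points reducing to prescribed smooth points of known components --- which simultaneously re-confirms the normalizing constant $1/(3N)$. A minor separate point is the degenerate cases $N=1,2$, where the intersection matrix is not of the generic shape; there one argues directly or by a blow-up, and the formula is unchanged. (An alternative route avoiding the model entirely is via the Tate parametrization $E_v\cong\mathbb{G}_m/q^{\Z}$ with $\operatorname{val}_v(q)=N$, reading off $\operatorname{ord}_{C_\nu}$ from the Newton polygons of theta-function products; the resulting combinatorics is the same.)
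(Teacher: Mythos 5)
The paper offers no proof of this proposition: it is quoted verbatim (with the degree data $d_j(\nu)$, the Bernoulli polynomial ${\bf B}_3$, and the generator $\Phi_1^1$) from Schappacher--Scholl, Proposition 3.2 and its Erratum, so there is no in-paper argument to compare against. Your reconstruction --- localization/Gersten reduction of $\partial_v$ to tame symbols along the components $C_\nu$, the discrete Poisson equation $d_j(\nu)=2a_j(\nu)-a_j(\nu\pm1)$ from the intersection form of the N\'eron $N$-gon, inversion by the Green's function $\tfrac{N}{2}{\bf B}_2(\langle\cdot/N\rangle)$, and the convolution identity producing ${\bf B}_3$ with the factor $\tfrac{2!\,1!}{3!}=\tfrac13$ --- is essentially the argument of the cited source and is sound, with the sign/normalization issues (which branch of the coordinate on $C_\nu$, the class of $(u_0^k,\dots,u_{N-1}^k)$ versus $k\Phi_1^1$, small $N$) correctly identified by you as the delicate points and plausibly settled by your proposed test evaluation.
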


Let us return to the case of the Hesse cubic curves. 
We change the notations and let $f \colon X \to \mathbb{P}^1$, 
$S \subset \mathbb{P}^1$, $U=f^{-1}(S)$ and $X_t=f^{-1}(t)$ for $t \in S(\Q)$  
denote the schemes defined as before, but over $\Q$. 
Our elements $\xi(\rho)_t$ and $\xi(\rho \zeta)_t + \xi(\rho \zeta^2)_t$ over $\C$ (Definition \ref{def:1}) descend to elements of $H_{\mathscr{M}}^2(X_t, \Q(2))$,
which we denote by the same letters. 
We also define the {\it Hesse symbol} by
$$\xi_{{\rm Hes}, t}=\frac13\sum_{z \in \mu_3} \xi(\rho z)_t \in H_{\mathscr{M}}^2(X_t, \Q(2)).$$
If $t=\frac1{\sqrt[3]{2}}$, then $F_{\infty}(B)=B$ by the definition. 
Since $H_1(X_t(\C), \Q)$ forms a locally constant sheaf on $U$, $F_{\infty}(B)=B$ holds in general. 
Hence a basis of $H_1(X_t(\C), \Q)^-$ is given by 
\begin{align} \label{gamma}
\gamma:= {A-F_{\infty}(A)}.  
\end{align}
As an immediate corollary of Theorem \ref{thm:main} (ii), we obtain the following. 
\begin{cor} \label{maincor}
For $t \in S(\Q)$, 
$$\frac{1}{2\pi i}r_{\D, \Q}(\xi_{{\rm Hes}, t})(\gamma)=\frac{\sqrt{3}}{2\pi}\left(H_1(t)+H_2(t)\right).$$
\end{cor}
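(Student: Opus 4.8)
The plan is to deduce Corollary \ref{maincor} directly from Theorem \ref{thm:main} (ii) by summing over $z \in \mu_3$ and exploiting the vanishing $\sum_{z \in \mu_3} z = \sum_{z \in \mu_3} z^2 = 0$. First I would recall that by definition $\xi_{{\rm Hes}, t} = \frac13 \sum_{z \in \mu_3} \xi(\rho z)_t$, and that $r_{\D, \Q}$ is the map induced by $r_{\D}$, so that $\frac{1}{2\pi i} r_{\D, \Q}(\xi_{{\rm Hes}, t})(\gamma)$ with $\gamma = A - F_\infty(A)$ can be read off from $r_{\D}(\xi(\rho z)_t)(A)$ and $r_{\D}(\xi(\rho z)_t)(B)$, using $F_\infty(B) = B$ and hence that the $B$-component contributes via $F_\infty(A)$ expressed in the symplectic basis.

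Next I would carry out the summation. From Theorem \ref{thm:main} (ii), $\sum_{z \in \mu_3} r_{\D}(\xi(\rho z)_t)(A) = -3(1-\zeta)\big(H_1(t) + \frac13\sum_z K_{1,z^2}(t)\big) + 3(1-\zeta^2)\big(H_2(t) + \frac13\sum_z K_{2,z}(t)\big)$, since the constant terms $-z(1-\zeta)C_1 - z^2(1-\zeta^2)C_2$ sum to zero over $z \in \mu_3$. The key point is that $\frac13 \sum_{z \in \mu_3} K_{1, z^2}(t)$ and $\frac13 \sum_{z \in \mu_3} K_{2, z}(t)$ also vanish: inspecting the formulas for $K_{1,z}(t)$ and $K_{2,z}(t)$, every term is linear in $z$ or $z^2$ (with no $z$-independent part), so summing over the three cube roots of unity kills all of them. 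Thus $\frac13 \sum_{z} r_{\D}(\xi(\rho z)_t)(A) = -(1-\zeta) H_1(t) + (1-\zeta^2) H_2(t)$, and similarly $\frac13 \sum_z r_{\D}(\xi(\rho z)_t)(B) = H_1(t) - H_2(t)$.

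It then remains to combine these into the value on $\gamma = A - F_\infty(A)$. I would use that $F_\infty$ acts on $H_1(X_t(\C), \Z)$ with $F_\infty(B) = B$, and write $F_\infty(A) = aA + bB$ in the symplectic basis for appropriate integers $a, b$ determined by the construction in Section 2; since $F_\infty$ is an involution preserving the intersection pairing with $F_\infty(B) = B$, one must have $a = -1$, so $\gamma = A - F_\infty(A) = 2A - bB$. Evaluating, $\frac{1}{2\pi i} r_{\D, \Q}(\xi_{{\rm Hes}, t})(\gamma) = \frac{1}{2\pi i}\big(2 \cdot \frac13\sum_z r_{\D}(\xi(\rho z)_t)(A) - b \cdot \frac13\sum_z r_{\D}(\xi(\rho z)_t)(B)\big)$ modulo $\Q(1)$, and substituting the computed sums yields a $\Q$-linear combination of $H_1(t) + H_2(t)$ (the $H_1 - H_2$ and individual pieces must recombine using $1 - \zeta$, $1-\zeta^2$ and $b$); normalizing by $2\pi i$ converts the factor $(1-\zeta) = \frac{3 - \sqrt{-3}}{2}$ etc. into the real coefficient $\frac{\sqrt{3}}{2\pi}$. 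The main obstacle is pinning down $b$ (equivalently the precise action of $F_\infty$ on the cycle $A$) and checking that the combination of $\zeta$-coefficients with this $b$ produces exactly $\frac{\sqrt 3}{2\pi}(H_1(t) + H_2(t))$ rather than a spurious $H_1 - H_2$ term; this is a finite but slightly delicate bookkeeping step with the imaginary parts, which I would handle by noting that only the $(-1)$-eigenpart of $F_\infty$ survives and that $\operatorname{Im}(1-\zeta) = \operatorname{Im}(\zeta^2 - \zeta) \cdot (\text{const})$ forces the symmetric combination.
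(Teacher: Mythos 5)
Your proposal is correct and follows essentially the same route as the paper, which states the corollary as an immediate consequence of Theorem \ref{thm:main}(ii): summing over $z\in\mu_3$ kills the $K_{i,z}$-terms and the constants since every such term is homogeneous of degree $1$ or $2$ in $z$. The coefficient $b$ in $F_\infty(A)=-A+bB$ that you flag as the main obstacle is in fact irrelevant: the $B$-component of $\gamma$ contributes a rational multiple of $H_1(t)-H_2(t)$, which is real and hence dies in $\R(1)\cong\C/\R(2)$, so only $\frac{1}{\pi}\operatorname{Im}\bigl(-(1-\zeta)H_1(t)+(1-\zeta^2)H_2(t)\bigr)=\frac{\sqrt3}{2\pi}\bigl(H_1(t)+H_2(t)\bigr)$ survives.
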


\begin{rmk}
By Zudilin's formula \cite[Lemma 3.5]{A}, we can rewrite the formula above as 
\begin{align} \label{Zudilin}
&\frac{1}{2\pi i}r_{\D, \Q}(\xi_{{\rm Hes}, t})(\gamma) \\
&=- \frac13\left( 2\psi(1)-\psi\left(\frac13\right)-\psi\left(\frac23 \right)+3\log{t}-\frac2{9t^3}{_{4}F_{3}}\left[ \left. 
\begin{matrix}
\frac43,\frac53,1, 1 \\
2, 2, 2
\end{matrix}
\right| \frac{1}{t^3}
\right]\right), \notag
\end{align}
where $\psi(x)=\Gamma'(x)/\Gamma(x)$ is the digamma function. 
\end{rmk}
 For the integrality of the Hesse symbol, we have the following. 
 \begin{thm} \label{Hesse symbol}
For $t \in S(\Q)$, 
$\xi_{{\rm Hes}, t} \in H_{\mathscr{M}}^2(X_t, \Q(2))_{\Z}$ if and only if $3t \in \Z$. 
\end{thm}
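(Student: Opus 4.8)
The plan is to use the localization sequence and Proposition \ref{boundary} to compute the boundary maps $\partial_p(\xi_{{\rm Hes},t})$ at every prime $p$, and show they all vanish precisely when $3t\in\Z$. First I would observe that by \eqref{rmk1}, the only primes that can contribute are those of split multiplicative reduction for $X_t$; since $X_t$ is the Hesse cubic, the discriminant is (up to a unit and power) a power of $1-t^3$, so the relevant primes are those dividing the numerator or denominator of $1-t^3$ after clearing denominators. Writing $t=a/b$ in lowest terms, the bad primes are those dividing $b$ (where one expects potentially multiplicative or additive reduction coming from $t=\infty$) and those dividing $a^3-b^3$. For each such prime, I would identify the minimal Néron model and the number $N$ in the Néron $N$-gon, then track the divisors of the rational functions $f,g$ appearing in $\xi(\rho z)_t=\{f,g\}$ from Definition \ref{def:1} under flat extension to $\mathscr{E}$, compute the intersection degrees $d_j(\nu),d_j'(\nu)$ with the components $C_\nu$, and plug into the Bernoulli-polynomial formula.

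The key computational input is that the symbols $\xi(\rho z)_t$ have divisors supported on the $3$-torsion points (the flexes $\rho^i\zeta^j(O)$), which are exactly the points where the smooth-part hypothesis of Proposition \ref{boundary} must be checked; one must verify (or arrange by a correspondence/moving lemma) that the closures of these torsion sections meet the special fibre in the smooth locus, and determine which components $C_\nu$ they hit. Summing over $z\in\mu_3$ to form $\xi_{{\rm Hes},t}$ should produce cancellation: the symmetry of the three flex-triples $\{O,\rho z(O),(\rho z)^2(O)\}$ under the $\mu_3$-action permutes the components cyclically, so the averaged boundary becomes a sum of $\mathbf{B}_3(\langle \nu/N\rangle)$ over a full period, which vanishes because $\sum_{\nu\bmod N}\mathbf{B}_3(\langle\nu/N\rangle)=0$ — but this only forces vanishing when the reduction type allows the torsion points to be spread across all components, i.e. exactly in the ``good enough'' cases. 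When $3t\notin\Z$, i.e. some prime $p\mid b$ with $p\neq 3$ (or the behaviour at $3$ is bad), the reduction at that prime is such that the $3$-torsion does not distribute evenly, the Bernoulli sum is over a proper sub-multiset, and $\partial_p(\xi_{{\rm Hes},t})\neq 0$; this is the ``only if'' direction. For the ``if'' direction ($3t\in\Z$), I would check case by case that at every bad prime the configuration of torsion sections on the $N$-gon is symmetric enough for the Bernoulli sums to telescope to zero.

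Concretely I would split into: (a) primes $p\mid (a^3-b^3)$, $p\nmid 3b$, where $X_t$ has multiplicative reduction, $N$ is the valuation of the discriminant (a Néron $N$-gon), and the flex points reduce onto components indexed by a subgroup of $\Z/N\Z$; here I would show the contribution of each single $\xi(\rho z)_t$ is a specific value $\mathbf{B}_3$ but the sum over $z$ telescopes. (b) The prime $p=3$, which is always delicate for the Hesse pencil (the fibre $X_\infty$ and the three other singular fibres $X_1,X_\zeta,X_{\zeta^2}$ collide mod $3$); here I would use that $3t\in\Z$ exactly controls whether $t$ reduces into the good locus mod $3$. (c) Primes $p\mid b$: at $t=\infty$ the fibre is the standard $3$-gon, and $v_p(t)<0$ forces the relevant reduction type; the $3t\in\Z$ condition is exactly $v_p(3)\ge -v_p(t)$, i.e. $p=3$ or $v_p(t)\ge 0$, pinning down when the section avoids the bad components. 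The main obstacle I expect is the bookkeeping at $p=3$ and at primes dividing $b$: determining the precise minimal regular model (the Hesse form is not minimal/regular as written, so one must blow up and renormalize), identifying $N$ and the component hit by each torsion section, and getting the signs right in Proposition \ref{boundary}. Once the model is pinned down, the Bernoulli-sum vanishing is formal; the genuine work is the explicit intersection theory on the Néron polygons, and this is presumably where the ``conjecture in \cite{Rod}'' was the sticking point.
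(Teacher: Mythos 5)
Your overall framework --- the localization sequence, the reduction via \eqref{rmk1} to primes of split multiplicative reduction, and Schappacher--Scholl's Bernoulli-polynomial formula on the N\'eron polygon --- is exactly the paper's. But the mechanism you propose for the vanishing/non-vanishing dichotomy is not the one that actually operates, and as stated it would not establish either direction. Writing $3t=n/m$ in lowest terms, the two relevant families of primes are those dividing $n^3-27m^3$ and those dividing $m$. In \emph{both} cases the special fibre is a N\'eron $3N$-gon, and the flexes supporting $\operatorname{div}(f_{\zeta^i})$ and $\operatorname{div}(f'_{\zeta^i})$ land on only three of the $3N$ components, namely $C_0,C_N,C_{2N}$; the Bernoulli arguments occurring are always just $0,\tfrac13,\tfrac23$. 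So there is never a ``full period'' sum $\sum_{\nu\bmod N}\mathbf{B}_3(\langle\nu/N\rangle)$, and the distribution relation you invoke is not the relevant identity; nor does your criterion ``torsion spread across all components versus a proper sub-multiset'' distinguish the two cases, since the supporting components are the same three in both. What actually differs is how the component assignment of the zeros and poles varies with $z\in\mu_3$: at a prime dividing $n^3-27m^3$ the patterns for $z=\zeta$ and $z=\zeta^2$ are swapped, so their contributions $18\mathbf{B}_3(\tfrac13)-9\mathbf{B}_3(\tfrac23)$ and $18\mathbf{B}_3(\tfrac23)-9\mathbf{B}_3(\tfrac13)$ cancel via the antisymmetry $\mathbf{B}_3(x)=-\mathbf{B}_3(1-x)$; at a prime dividing $m$ the pattern is identical for every $z$, each symbol contributes $18\mathbf{B}_3(\tfrac13)-9\mathbf{B}_3(\tfrac23)=1\neq0$, and nothing cancels. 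This asymmetry between the two reduction types is the heart of the proof and is missing from your plan.

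Two smaller points. First, you should pass to $K=\Q(\mu_3)$ by a norm argument before computing: $\xi_{\mathrm{Hes},t}$ is defined over $\Q$, but its constituents $\xi(\rho\zeta^{\pm1})_t$ and the explicit functions $f_{\zeta^i}$ are not, and over $K$ the multiplicative reduction automatically splits, which disposes of your splitness worry. Second, your concern about $p=3$ points in the wrong direction: for $v\mid 3$ the reduction is additive, so $K_1'(\widetilde{\mathscr{X}}_{t,K,v})\otimes\Q=0$ and there is no obstruction there at all. The condition $3t\in\Z$ is equivalent to $m=1$, i.e.\ to the absence of denominator primes, and those are exactly the primes carrying the non-trivial boundary.
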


\begin{proof}
 Let $K=\Q(\mu_3)$ and $X_{t, K}=X_t \times_{\spec \Q} \spec K$. 
 By the norm argument, the integrality $\xi_{{\rm Hes}, t}$ is equivalent to 
 the integrality of its image $\xi_{{\rm Hes}, t, K}$ in 
 $H^2_{\mathscr{M}}(X_{t, K}, \Q(2))$. 
 Write $3t=n/m$ where $n \in \Z$, $m \in \Z_{>0}$ and $(m, n)=1$. 
 Let $\mathscr{O}_K$ be the integer ring of $K$ and put 
 $$\mathscr{X}_{t, K} = \operatorname{Proj}\mathscr{O}_K[x_0, y_0, z_0]/(m(x_0^3+y_0^3+z_0^3)-nx_0y_0z_0).$$
Let $\widetilde{\mathscr{X}}_{t, K}$ be a minimal desingularization of $\mathscr{X}_{t, K}$, and 
for a prime $v$ of $K$, let 
$$\partial_v \colon K_2(X_{t, K})\otimes \Q \to K_1^{\prime}(\widetilde{\mathscr{X}}_{t, K, v})\otimes \Q$$
be the boundary map. 
By \eqref{rmk1}, it suffices to compute
$\partial_v(\xi_{{\rm Hes}, t, K})$ when 
$X_{t, K}$ has split multiplicative reduction at $v$. 
If $v \mid 3$, then $X_{t, K}$ has additive reduction.  
Multiplicative reduction occurs in the following cases:  
\begin{enumerate}
\item $v \nmid 3m$ and $n^3 \equiv 27m^3 \pmod{v}$, 
\item $v \mid m$.
\end{enumerate}
As we will see below, the reduction always splits. 
Fix a generator $\pi$ of $v$. 

(i) Put $N=\ord_v(n^3-27m^3)$.
Assume that $n \equiv 3m \pmod{v}$.
Put 
$$w_k=x_0+\zeta^k y_0+\zeta^{2k}z_0 \quad (k=0, 1, 2).$$ 
Then 
$\mathscr{X}_{t, K}$ is given locally by 
\begin{align*}
 &\mathscr{X}_{t, K, \mathfrak{p}_0}=\operatorname{Spec}\mathscr{O}_{K}\left[w_1/w_0, w_2/w_0\right]_{ \mathfrak{p}_0}/(w_1w_2/w_0^2-(unit)\pi^N),  \\
 &\mathscr{X}_{t, K, \mathfrak{p}_1}=\operatorname{Spec}\mathscr{O}_{K}\left[w_0/w_1, w_2/w_1\right]_{ \mathfrak{p}_1}/(w_0w_2/w_1^2-(unit)\pi^N),  \\
 &\mathscr{X}_{t, K, \mathfrak{p}_2}=\operatorname{Spec}\mathscr{O}_{K}\left[w_0/w_2, w_1/w_2\right]_{ \mathfrak{p}_2}/(w_0w_1/w_2^2-(unit)\pi^N),  
 \end{align*}
where $\mathfrak{p}_0=(w_1/w_0, w_2/w_0, \pi)$, 
$\mathfrak{p}_1=(w_0/w_1, w_2/w_1, \pi)$ and $\mathfrak{p}_2=(w_0/w_2, w_1/w_2, \pi)$.
A desingularization around the special fiber at $v$ is given by blowing up $\lfloor N/2 \rfloor$ times at each $\mathfrak{p}_i$, and its special fiber 
$\widetilde{\mathscr{X}}_{t, K, v}$ 
becomes a N\'eron $3N$-gon: 
$$\widetilde{\mathscr{X}}_{t, K, v}=\bigcup_{\nu \in \Z/3N\Z}C_{\nu}.$$
Here, 
the numbering is chosen so that $C_{kN}$ is the inverse image of $\{w_k=0\}$ ($k=0, 1, 2$). 
In view of Definition \ref{def:1}, put
$$f_{\zeta^i}=-\dfrac{x_0+y_0+tz_0}{x_0+\zeta^{2i}ty_0+\zeta^i z_0}, \quad f^{\prime}_{\zeta^i}=-\dfrac{tx_0+\zeta^iy_0+\zeta^{2i}z_0}{x_0+y_0+tz_0} \quad (i \in \Z/3\Z).$$
Then the assumption of Proposition \ref{boundary} is satisfied. 
The degrees of the divisors of $f_{\zeta^i}$  and $f'_{\zeta^i}$ on $C_{\nu}$ are computed as 
$$d_{\zeta^i}(\nu) = \left\{
\begin{array}{ll}
3 & (i, \nu)=(1, 0), (2, 0)\\
-3 & (i, \nu)=(1, 2N), (2, N)\\
0 & \mbox{otherwise},
\end{array}
\right.
$$
$$d^{\prime}_{\zeta^i}(\nu) = \left\{
\begin{array}{ll}
3 & (i, \nu)=(1, N), (2, 2N)\\
-3 & (i, \nu)=(1, 0), (2, 0)\\
0 & \mbox{otherwise}.
\end{array}
\right.
$$
Hence we have
\begin{align*}
\partial_v(\xi_{{\rm Hes}, t, K})&=\pm\dfrac{1}{9N}\sum_{\mu, \nu \in \Z/3N\Z} \sum_{i \in \Z/3\Z} d_{\zeta^i}(\mu) d_{\zeta^i}^{\prime}(\mu+\nu)\boldsymbol{B}_3\left(\left\langle \dfrac{\nu}{3N} \right\rangle \right)\cdot \Phi_1^1 \\
&=
\pm \frac{1}{N}\left(\boldsymbol{B}_3\left(\frac13\right)+\boldsymbol{B}_3\left(\frac23\right) \right)\cdot \Phi_1^1=0.
\end{align*}
Here, we used $\boldsymbol{B}_3(x)=-\boldsymbol{B}_3(1-x)$.
When $n \equiv 3m\zeta , 3m \zeta^2  \pmod{v}$, 
we can prove similarly that $\partial_v(\xi_{{{\rm Hes}, t, K}})=0$.

(ii) Put $N=\ord_v(m)$. Then 
$\mathscr{X}_{t, K}$ is given locally by 
\begin{align*}
 &\mathscr{X}_{t, K, \mathfrak{q}_0}=\operatorname{Spec}\mathscr{O}_{K}\left[y_0/x_0, z_0/x_0\right]_{ \mathfrak{q}_0}/(y_0z_0/x_0^2-(unit)\pi^N),  \\
 &\mathscr{X}_{t, K, \mathfrak{q}_1}=\operatorname{Spec}\mathscr{O}_{K}\left[x_0/y_0, z_0/y_0\right]_{ \mathfrak{q}_1}/(x_0z_0/y_0^2-(unit)\pi^N),  \\
 &\mathscr{X}_{t, K, \mathfrak{q}_2}=\operatorname{Spec}\mathscr{O}_{K}\left[x_0/z_0, y_0/z_0\right]_{ \mathfrak{q}_2}/(x_0y_0/z_0^2-(unit)\pi^N),  
 \end{align*}
where $\mathfrak{q}_0=(y_0/x_0, z_0/x_0, \pi)$, $\mathfrak{q}_1=(x_0/y_0, z_0/y_0, \pi)$ and $\mathfrak{q}_2=(x_0/z_0, y_0/z_0, \pi)$. 
Again 
by blowing up $\lfloor N/2 \rfloor$ times at each $\mathfrak{q}_i$, 
the special fiber 
$\widetilde{\mathscr{X}}_{t, K, v}$ becomes a N\'eron $3N$-gon;  
the numbering is chosen so that $C_0$, $C_N$ and $C_{2N}$ are respectively the inverse image of 
$\{z_0=0\}$, $\{x_0=0\}$ and  $\{y_0=0\}.$
Then the assumption of Proposition \ref{boundary} is also satisfied 
and  
we have
\begin{align*}
&d_{\zeta^i}(0)=d^{\prime}_{\zeta^i}(N)=3, \quad d_{\zeta^i}(N)=d_{\zeta^i}^{\prime}(2N)=0, \quad d_{\zeta^i}(2N)=d_{\zeta^i}^{\prime}(0)=-3,
\end{align*}
and $d_{\zeta^i}(\nu)=0$ for $\nu \not \in \{0, N, 2N\}$. 
By Proposition \ref{boundary}, we have
\begin{align*}
\partial_v(\xi_{{\rm Hes}, t, K})&=\pm\frac1{9N} \left(18\boldsymbol{B}_3\left(\frac13\right)-9\boldsymbol{B}_3\left(\frac23\right)\right) \cdot \Phi_1^1
=\pm \frac1{9N} \cdot \Phi_1^1 \neq 0.
\end{align*}
Therefore, it follows that $\partial_v(\xi_{{\rm Hes}, t, K})=0$ for any $v$  if and only if 
$m=1$.  
\end{proof}
\begin{proof}[Proof of Theorem \ref{num}]
It is known that $X_t$ is birationally equivalent to the Weierstrass model $E_t$ over $\Q$ (\cite[Proposition 1]{MJ})
$$y^2=x^3-27t(t^3+8)x+54(t^6-20t^3-8).$$ 
We compute $L(X_t, 2)=L(E_t, 2)$ by using Magma. 
On the other hand, special values of hypergeometric functions are computed by using Mathematica. 
By comparing these values, we obtain the theorem. We remark that we use \eqref{Zudilin} when $t>1$. 
\end{proof}

When $t=-1/2$, we can construct another integral element.
 \begin{thm} \label{element2}
Put 
$$\xi^{\prime}_{-\frac12}=2\xi(\rho)_{-\frac12} - \xi(\rho \zeta)_{-\frac12}-\xi(\rho \zeta^2)_{-\frac12}.$$ 
Then 
$\xi^{\prime}_{-\frac12}$ is an element of $H^2_{\mathscr{M}}(X_{-\frac12}, \Q(2))_{\Z}$,  
and 
we have numerically 
$$\frac{1}{2\pi i}r_{\D, \Q}(\xi^{\prime}_{-\frac12})(\gamma)=\dfrac{3^5}{2\pi^2} L(X_{-\frac12}, 2)$$
(the digit of precision is at least 15).
\end{thm}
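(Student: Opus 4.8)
The plan is to prove the two assertions in turn: first the integrality of $\xi'_{-\frac12}$, then the numerical identity. For integrality I would argue exactly as in the proof of Theorem \ref{Hesse symbol}. Since $\xi'_{-\frac12}$ is defined over $\Q$, by the norm argument it is integral if and only if its image $\xi'_{-\frac12,K}$ over $K=\Q(\mu_3)$ satisfies $\partial_v(\xi'_{-\frac12,K})=0$ for every finite place $v$ of $K$. With $3t=-\tfrac32$ one has $n=-3$, $m=2$ and $n^3-27m^3=-3^5$, so case (i) of that proof is vacuous here (it would force $v\mid 3$, where the reduction is additive and $K_1'(\cdot)\otimes\Q=0$), and the only place of split multiplicative reduction is the inert place $v\mid 2$, covered by case (ii) with $N=\ord_v(2)=1$. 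There $\widetilde{\mathscr{X}}_{-\frac12,K,v}$ is a N\'eron $3$-gon and Proposition \ref{boundary} applies to each $\xi(\rho\zeta^i)_{-\frac12}=\{f_{\zeta^i},f'_{\zeta^i}\}$. The crucial observation is that the intersection degrees $d_{\zeta^i}(\nu)$, $d'_{\zeta^i}(\nu)$ computed there are independent of $i$ — the flexes $O$, $\rho\zeta^i(O)$, $\rho^2\zeta^{2i}(O)$ hit smooth points of the three components regardless of $i$ — so $\partial_v(\xi(\rho\zeta^i)_{-\frac12,K})$ is one and the same element $D$, whence
\begin{align*}
\partial_v(\xi'_{-\frac12,K})&=2\partial_v(\xi(\rho)_{-\frac12,K})-\partial_v(\xi(\rho\zeta)_{-\frac12,K})-\partial_v(\xi(\rho\zeta^2)_{-\frac12,K})\\
&=(2-1-1)D=0.
\end{align*}
This is precisely where the weights $2,-1,-1$ (summing to $0$) improve on the weights $\tfrac13,\tfrac13,\tfrac13$ of $\xi_{{\rm Hes},-\frac12}$, for which $D\ne 0$.

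For the regulator identity I would use $\xi'_{-\frac12}=3\,\xi(\rho)_{-\frac12}-3\,\xi_{{\rm Hes},-\frac12}$ in $H^2_{\mathscr{M}}$. Both elements on the right are defined over $\Q$, so the real structure of $X_{-\frac12}$ gives $r_{\D}(\xi)(F_\infty A)=\overline{r_{\D}(\xi)(A)}$, and therefore $r_{\D}(\xi)(\gamma)=r_{\D}(\xi)(A)-\overline{r_{\D}(\xi)(A)}$ is purely imaginary and carries no $\Q(2)$-ambiguity. Substituting $t=-\tfrac12$ in Theorem \ref{thm:main} (ii) with $z=1$, using that $H_1,H_2,K_{1,1},K_{2,1},C_1,C_2$ are real there and that $\overline{\zeta}=\zeta^2$, $\zeta-\zeta^2=\sqrt3\,i$, one finds
\[
r_{\D}(\xi(\rho)_{-\frac12})(\gamma)=\sqrt3\,i\Bigl(H_1(-\tfrac12)+H_2(-\tfrac12)+K_{1,1}(-\tfrac12)+K_{2,1}(-\tfrac12)+C_1-C_2\Bigr),
\]
while Corollary \ref{maincor} gives $r_{\D}(\xi_{{\rm Hes},-\frac12})(\gamma)=\sqrt3\,i\bigl(H_1(-\tfrac12)+H_2(-\tfrac12)\bigr)$. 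Since $r_{\D,\Q}(\xi)(\gamma)$ equals the purely imaginary value $r_{\D}(\xi)(\gamma)$ (consistently with Corollary \ref{maincor}), subtracting and dividing by $2\pi i$ yields
\[
\frac{1}{2\pi i}\,r_{\D,\Q}(\xi'_{-\frac12})(\gamma)=\frac{3\sqrt3}{2\pi}\bigl(K_{1,1}(-\tfrac12)+K_{2,1}(-\tfrac12)+C_1-C_2\bigr).
\]
(Alternatively one first checks $F_\infty(A)=-A-3B$ from Theorem \ref{thm:1}, so that $\gamma=2A+3B$, and computes with both formulas of Theorem \ref{thm:main} (ii) directly.)

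It then remains to compare the two sides numerically. The right-hand side is a sum of rapidly convergent ${}_3F_2(1)$ and Kamp\'e de F\'eriet series — the arguments $(-\tfrac12)^3=-\tfrac18$ lie well inside the domain of convergence and the ${}_3F_2(1)$ entering $C_1,C_2$ are balanced — which I would evaluate to at least $15$ digits, and $L(X_{-\frac12},2)=L(E_{-\frac12},2)$ is computed to the same precision from the Weierstrass model in the proof of Theorem \ref{num} using a computer algebra system; the comparison confirms $\tfrac{1}{2\pi i}\,r_{\D,\Q}(\xi'_{-\frac12})(\gamma)=\tfrac{3^5}{2\pi^2}L(X_{-\frac12},2)$, the rational constant $3^5/2$ being read off from the numerics. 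The only real difficulty is bookkeeping — keeping track of the signs, the weights $z,z^2$ in Theorem \ref{thm:main}, and the normalisation in the passage from $r_{\D}$ (taken mod $\Q(2)$) to the real regulator $r_{\D,\Q}$ — together with the high-precision evaluations; there is no new conceptual input beyond Theorems \ref{Hesse symbol}, \ref{thm:main} and Corollary \ref{maincor}.
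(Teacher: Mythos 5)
Your proposal is correct and follows essentially the same route as the paper: the integrality is the same boundary-map computation at the unique place of split multiplicative reduction $v=(2)$ (the paper merely says ``similarly as in Theorem \ref{Hesse symbol}'', and your observation that the weights $2,-1,-1$ sum to zero while the components hit by the divisors are independent of $i$ is exactly the point), and the regulator formula is obtained from Theorem \ref{thm:main} and a numerical comparison. Your decomposition $\xi'_{-\frac12}=3\xi(\rho)_{-\frac12}-3\xi_{\mathrm{Hes},-\frac12}$ combined with Corollary \ref{maincor} is only a cosmetic repackaging of the paper's direct substitution, since $2K_{j,1}-K_{j,\zeta}-K_{j,\zeta^2}=3K_{j,1}$ makes the two closed-form expressions literally identical.
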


\begin{proof}
First, $\xi(\rho)_{-\frac12}$ is defined over $\Q$, and $\xi(\rho \zeta)_{-\frac12}$ and $\xi(\rho \zeta^2)_{-\frac12}$ are defined over $K$ and conjugate to each other over $\Q$. 
Hence $\xi'_{-\frac12}$ is defined over $\Q$. 
When $t=-1/2$, 
$X_{t, K}$ has split multiplicative reduction only at the prime $v = (2)$. 
We can prove $\partial_2(\xi^{\prime}_{-\frac12})=0$ similarly as in the proof of Proposition \ref{Hesse symbol},
hence $\xi'_{-\frac12}$ is integral. 
By Theorem \ref{thm:main}, 
we have
\begin{align*}
&\frac{1}{2\pi i}r_{\D, \Q}(\xi^{\prime}_{-\frac12})(\gamma)\\
&=\dfrac{\sqrt{3}}{2\pi}
\left(2K_{1, 1}-K_{1, \zeta}-K_{1, \zeta^2}+2K_{2, 1}-K_{2, \zeta}-K_{2, \zeta^2}+3C_1-3C_2\right)\left(-\frac12\right).
\end{align*}
Then the numerical comparison is done as before.
\end{proof}

\subsection{Bloch-Beilinson conjecture over the quadratic field.}
Let $K=\Q(\mu_3)$.  
For $t \in S(K)$, let $X_{t, K}$ denote the Hesse cubic over $K$. 
Our elements $\xi(\zeta)_t$ and $\xi(\rho \zeta^i)_t$ (Definition \ref{def:1}) over $\C$ descends to an element in $H^2_{\mathscr{M}}(X_{t, K}, \Q(2))$, 
which we denote by the same letter.
The integral part $H^2_{\mathscr{M}}(X_{t, K}, \Q(2))_{\Z}$ is defined as before. 

\begin{thm} \label{element3}
For $t \in S(K)$, $\xi(\zeta)_{t}$ is integral if and only if $t^3=0, -8$ or $-1/8$.  
\end{thm}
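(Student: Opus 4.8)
The plan is to follow the strategy of the proof of Theorem \ref{Hesse symbol}: compute the boundary maps $\partial_v$ of $\xi(\zeta)_t$ at all finite places $v$ of $K$ and decide when they all vanish. Writing $\xi(\zeta)_t=\{f,g\}$ as in Definition \ref{def:1}, the proof of Proposition \ref{lemm:action} gives $\operatorname{div}(f)=3(O)-3(\zeta(O))$ and $\operatorname{div}(g)=3(\zeta(O))-3(\zeta^2(O))$, and the three points $O=[-1:1:0]$, $\zeta(O)=[-\zeta^2:1:0]$, $\zeta^2(O)=[-\zeta:1:0]$ all lie on the line $\{z_0=0\}$. Since $K$ has class number one, write $3t=\alpha/\beta$ with coprime $\alpha,\beta\in\mathscr{O}_K$, $\beta\neq 0$, and take $\mathscr{X}_{t,K}=\operatorname{Proj}\mathscr{O}_K[x_0,y_0,z_0]/(\beta(x_0^3+y_0^3+z_0^3)-\alpha x_0y_0z_0)$ with minimal desingularization $\widetilde{\mathscr{X}}_{t,K}$. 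By \eqref{rmk1} only places of split multiplicative reduction contribute; just as in the proof of Theorem \ref{Hesse symbol} these are (i) $v\nmid 3\beta$ with $v\mid(\alpha^3-27\beta^3)$ and (ii) $v\mid\beta$ with $v\nmid 3$, while $v\mid 3$ gives additive reduction. For a place $v$ of type (ii) the special fiber of $\mathscr{X}_{t,K}$ is the coordinate triangle $\{x_0y_0z_0=0\}$, and in the N\'eron $3N$-gon $\widetilde{\mathscr{X}}_{t,K,v}$ the component $C_0$ is the strict transform of $\{z_0=0\}$; since $v\nmid 3$, the three points $O,\zeta(O),\zeta^2(O)$ reduce to distinct smooth points of $C_0$, so the flat closures of $\operatorname{div}(f)$ and $\operatorname{div}(g)$ meet $\widetilde{\mathscr{X}}_{t,K,v}$ only along $C_0$, each with total degree $0$; hence every degree $d_j(\nu)$ in Proposition \ref{boundary} vanishes and $\partial_v(\xi(\zeta)_t)=0$. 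So type-(ii) places impose no condition.

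For a place $v$ of type (i) one has $t\equiv\zeta^j\pmod v$ for some $j$, the special fiber decomposes into three lines, and $\widetilde{\mathscr{X}}_{t,K,v}$ is a N\'eron $3N$-gon with $N=\ord_v(\alpha^3-27\beta^3)$, in which $C_0,C_N,C_{2N}$ are the strict transforms of those lines. Since $v\nmid 3$ the first coordinates $-1,-\zeta^2,-\zeta$ are distinct modulo $v$, so $O,\zeta(O),\zeta^2(O)$ reduce to three distinct points; these cannot be nodes (otherwise $\{z_0=0\}$ would meet the triangle in fewer than three points), and hence lie at smooth points, one on each of $C_0,C_N,C_{2N}$. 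Feeding $\operatorname{div}(f)$ and $\operatorname{div}(g)$ into Proposition \ref{boundary} and using $\boldsymbol{B}_3(\tfrac13)=\tfrac1{27}=-\boldsymbol{B}_3(\tfrac23)$, the double sum collapses to $\pm\tfrac1{9N}\Phi_1^1\neq 0$ (the precise combination of $\boldsymbol{B}_3$-values depends on which of $C_0,C_N,C_{2N}$ carries which point, but it is nonzero in every case). Therefore $\xi(\zeta)_t$ is integral if and only if there is no place of type (i).

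A prime dividing $\alpha^3-27\beta^3$ cannot divide $\beta$ (else it would divide $\alpha$), so the absence of type-(i) places says precisely that $\alpha^3-27\beta^3=u(\sqrt{-3})^a$ for a unit $u\in\mathscr{O}_K^{\times}$ and some $a\geq 0$. To solve this I would first note that integrality of $\xi(\zeta)_t$ depends only on $t^3$: the scalings $[x_0:y_0:z_0]\mapsto[x_0:y_0:\lambda z_0]$, $\lambda\in\mu_3$, are $K$-isomorphisms $X_t\cong X_{\lambda t}$ fixing $O,\zeta(O),\zeta^2(O)$ and extending to the integral models, hence carry $\xi(\zeta)_t$ to $\xi(\zeta)_{\lambda t}$ and preserve integrality; this lets us permute the three factors $\gamma_j=\alpha-3\zeta^j\beta$ of $\alpha^3-27\beta^3$ at will. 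These factors are pairwise coprime away from $(\sqrt{-3})$ (their differences are $3(\zeta^i-\zeta^j)\beta$), so each is itself a unit times a power of $\sqrt{-3}$; analysing the $\sqrt{-3}$-valuations of the $\gamma_i-\gamma_j$ together with the elementary fact that a point equidistant from the three vertices of an equilateral triangle is its centre forces $\beta$ to be a unit or twice a unit and $\alpha\in\{0,-6\zeta^j\beta,-3\zeta^j\beta\}$, i.e. $t^3\in\{0,-8,-\tfrac18\}$. Conversely, for these three values of $t^3$ there is by the above no place of type (i), and the only split multiplicative place (which occurs only when $t^3=-\tfrac18$, namely $t=-\tfrac12$ and $v=(2)$) is of type (ii); so $\xi(\zeta)_t$ is then integral.

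The two steps I expect to be the real work are the type-(i) local analysis — verifying that $\widetilde{\mathscr{X}}_{t,K,v}$ is a $3N$-gon carrying the three flex points at smooth points of the components $C_0,C_N,C_{2N}$, so that Proposition \ref{boundary} applies and, after the sign bookkeeping, returns a genuinely nonzero value — and the closing Diophantine equation $\alpha^3-27\beta^3=u(\sqrt{-3})^a$ over $\mathscr{O}_K$, which is elementary but needs a careful case analysis of $\sqrt{-3}$-valuations.
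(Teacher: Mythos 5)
Your proposal is essentially the paper's proof for the substantive part: the paper disposes of $t^3=0,-8$ in one line (these are the CM values, so $X_{t,K}$ has potentially good reduction everywhere and $H^2_{\mathscr M}=H^2_{\mathscr M}{}_{,\Z}$, no boundary computation needed) and then says the rest is ``proved similarly as Theorem \ref{Hesse symbol}'', which is exactly the local analysis you carry out. That analysis is correct and is the heart of the matter: at a place $v\mid\beta$ with $v\nmid 3$ the three points $O,\zeta(O),\zeta^2(O)$ all reduce to distinct smooth points of the one component coming from $\{z_0=0\}$, so every $d_j(\nu)$ in Proposition \ref{boundary} vanishes and $\partial_v(\xi(\zeta)_t)=0$ --- this is precisely what distinguishes $\xi(\zeta)_t$ from $\xi_{{\rm Hes},t}$ and lets $t=-1/2$ through --- while at a type-(i) place the three points land one on each of $C_0,C_N,C_{2N}$ and the double sum gives $\pm\tfrac1{9N}\Phi_1^1\neq0$ for either admissible cyclic arrangement, as you claim. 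The one soft spot is the closing Diophantine step, which you correctly flag as real work but whose sketch is slightly off: the ``equidistant from three vertices'' fact only produces the solution $\alpha=0$; the values $t^3=-8,-1/8$ come from the other configuration, where exactly one $\gamma_j$ has $\sqrt{-3}$-valuation strictly larger than the common value $3+\operatorname{ord}_{\sqrt{-3}}(\beta)$ of the other two, forcing $3t=\alpha/\beta\in\Q$ (write it as $n/m$ in lowest terms) and $n^2+3nm+9m^2=27$, whence $(n,m)=(-6,1)$ or $(-3,2)$ (the root $(3,1)$ being the excluded $t\in\mu_3$). Note also that your stated parametrization ``$\beta$ twice a unit and $\alpha=-3\zeta^j\beta$'' would give $t^3=-1$, not $-1/8$; what you want is $\alpha$ an associate of $-3\zeta^j$ and $\beta$ an associate of $2$. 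These are repairable slips in an argument whose structure and conclusion are right.
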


\begin{proof}
When $X_{t, K}$ has complex multiplication, 
it has potentially good reduction everywhere and we have 
$$H^2_{\mathscr{M}}(X_{t, K}, \Q(2))=H^2_{\mathscr{M}}(X_{t, K}, \Q(2))_{\Z}. $$ 
Since $X_{t, K}$ has complex multiplication if and only if $t^3=0$, $-8$, the first two cases are proved. 
The remaining case is proved similarly as Theorem \ref{Hesse symbol}. 
\end{proof}

We verify the conjecture when $t^3=-8, -1/8$. 
Since $X_{-2, K}$, $X_{-2 \zeta, K}$ and $X_{-2\zeta^2, K}$ (resp. $X_{-\frac12, K}$, $X_{-\frac12 \zeta, K}$ and $X_{-\frac12 \zeta^2, K}$) are isomorphic to each other over $K$,   
we only study $X_{-2, K}$ (resp. $X_{-\frac12, K}$). 
Let $\xi_{{\rm Hes}, -2, K}  \in H^2_{\mathscr{M}}(X_{-2, K}, \Q(2))_{\Z}$ 
and 
$\xi'_{-\frac12, K}  \in H^2_{\mathscr{M}}(X_{-\frac12, K}, \Q(2))_{\Z}$ 
be the images of the elements 
$\xi_{{\rm Hes}, -2} \in H^2_{\mathscr{M}}(X_{-2}, \Q(2))_{\Z}$
and 
$\xi'_{-\frac12} \in H^2_{\mathscr{M}}(X_{-\frac12}, \Q(2))_{\Z}$ defined in Section 4.3, respectively. 
The regulator map 
\begin{align*}
r_{\D, K} \colon H^2_{\mathscr{M}}(X_{t, K}, \Q(2)) \to  \operatorname{Hom}(H_1(X_t(\C), \Q), \R(1))
\end{align*}
is the composition of the pull-back 
$H^2_{\mathscr{M}}(X_{t, K}, \Q(2)) \to H^2_{\mathscr{M}}(X_{t, \C}, \Q(2))$ (with respect to the fixed embedding of $K$ into $\C$) and $r_{\D, \C}$. 
Put the regulator determinant with respect to $\gamma$ defined in \eqref{gamma} and $B$ as  
\begin{align*}
R_{-2}& =\left| \operatorname{det} \begin{pmatrix}
\frac{1}{2 \pi i}r_{\D, K}(\xi_{{\rm Hes}, -2, K} )(\gamma) & \frac{1}{2 \pi i}r_{\D, K}(\xi(\zeta)_{-2} (\gamma) \\
\frac{1}{2 \pi i}r_{\D, K}(\xi_{{\rm Hes}, -2, K} )(B) & \frac{1}{2 \pi i}r_{\D, K}(\xi(\zeta)_{-2} )(B)
\end{pmatrix}\right|, \\
R_{-\frac12}& =\left| \operatorname{det} \begin{pmatrix}
\frac{1}{2 \pi i}r_{\D, K}(\xi'_{-\frac12, K})(\gamma) & \frac{1}{2 \pi i}r_{\D, K}(\xi(\zeta)_{-\frac12})(\gamma) \\
\frac{1}{2 \pi i}r_{\D, K}(\xi'_{-\frac12, K})(B) & \frac{1}{2 \pi i}r_{\D, K}(\xi(\zeta)_{-\frac12})(B)
\end{pmatrix}\right|. 
\end{align*}
If the Bloch-Beilinson conjecture for $X_{t, K}$ is true, we have $\dim_{\Q} H^2_{\mathscr{M}}(X_{t, K}, \Q(2))_{\Z}=[K:\Q]=2$, and if our elements are linearly independent, then 
$$R_t \equiv \frac{1}{\pi^4}L(X_{t, K}, 2 ) \pmod{\Q^*}. $$

\begin{thm} \label{quad}
We have numerically 
\begin{align*}
&R_{-2}=\frac{3^9}{(2\pi)^4}L(X_{-2, K}, 2), \\
&R_{-\frac12}=\frac{3^8}{2\pi^4}L(X_{-\frac12, K}, 2)  
\end{align*}
(the digit of the precision is at least $15$). 
In particular,   
$\xi_{{\rm Hes}, -2, K}$ and $\xi(\zeta)_{-2}$ (resp. $\xi'_{-\frac12, K}$ and $\xi(\zeta)_{-\frac12}$) 
generate a $2$-dimensional subspace of $H^2_{\mathscr{M}}(X_{-2, K}, \Q(2))_{\Z}$ (resp. $H^2_{\mathscr{M}}(X_{-\frac12, K}, \Q(2))_{\Z}$). 
\end{thm}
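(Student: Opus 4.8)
The plan is to turn both displayed equalities into explicit identities between $2\times 2$ determinants of hypergeometric expressions, evaluate them numerically to high precision, and compare with the $L$-values, which we compute independently; the ``in particular'' clause then follows formally.

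First I would assemble the four regulator entries. By construction $r_{\D, K}$ is the pull-back along the fixed embedding $K\hookrightarrow\C$ followed by $r_{\D,\C}$, and under the identification $\C/\R(2)\cong\R(1)$ of Section 4.3 the latter is obtained from $r_{\D}$ by reducing modulo $\R(2)$, i.e.\ by retaining the imaginary part; hence every entry $\tfrac1{2\pi i}r_{\D, K}(\xi)(\kappa)$ with $\kappa\in\{\gamma,B\}$ is a fixed real-linear expression in the imaginary parts of the functions of Theorem \ref{thm:main}, once a branch is chosen. The $B$-entries come straight from Theorem \ref{thm:main}(i),(ii). For the $\gamma$-entries I first pin down the action of $F_\infty$ on the symplectic basis $\{B,A\}$: on $H_1$ of a real elliptic curve $F_\infty$ is an involution of determinant $-1$, and since $F_\infty(B)=B$ (Section 4.3) this forces $F_\infty(A)=-A+bB$ with $b\in\Z$, so $\gamma=2A-bB$ and $r_{\D}(\xi)(\gamma)=2\,r_{\D}(\xi)(A)-b\,r_{\D}(\xi)(B)$; the integer $b$ is read off from the explicit paths $\lambda,\lambda'$ of Section 2, continued to the relevant real point. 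For $\xi_{{\rm Hes}, -2, K}$ and $\xi'_{-\frac12, K}$ the $\gamma$-entry is already furnished by Corollary \ref{maincor} and by the formula in the proof of Theorem \ref{element2}; for $\xi(\zeta)_{-2}$ and $\xi(\zeta)_{-\frac12}$ I use Theorem \ref{thm:main}(i) and linearity of $r_{\D}$.

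Next come the branch issues and the numerics. At $t=-2$ one has $t^3=-8$, so $|t^3|>1$ and the series ${}_2F_1(t^3)$, ${}_3F_2(t^3)$ must be continued; concretely I use the ${}_4F_3(1/t^3)$-form \eqref{Zudilin} for the $\xi_{\rm Hes}$-contribution and the standard connection / Olsson-type representations for the $\xi(\zeta)$-contribution to obtain rapidly convergent series, while at $t=-\tfrac12$ one has $|t^3|<1$ and the series converge directly. The integrality hypotheses are in place: $\xi_{{\rm Hes}, -2}$ is integral by Theorem \ref{Hesse symbol} ($3t=-6\in\Z$), $\xi(\zeta)_{-2}$ and $\xi(\zeta)_{-\frac12}$ by Theorem \ref{element3} ($t^3=-8,-\tfrac18$), and $\xi'_{-\frac12}$ by Theorem \ref{element2}; their images in $H^2_{\mathscr{M}}(X_{t, K},\Q(2))_\Z$ stay integral. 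For the $L$-side, $K=\Q(\mu_3)$ gives $L(X_{t, K},s)=L(X_t,s)\,L(X_t,\chi,s)$ with $\chi$ the quadratic character cutting out $K$, and both factors are computed by Magma from the Weierstrass model $E_t$ as in the proof of Theorem \ref{num}. Evaluating the regulator entries in Mathematica to at least $15$ digits and forming the two determinants produces the asserted equalities; since $R_t\neq0$ and $r_{\D, K}$ is $\Q$-linear, the two classes in each case are $\Q$-linearly independent in $H^2_{\mathscr{M}}(X_{t, K},\Q(2))_\Z$ and hence span a $2$-dimensional subspace.

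The main obstacle is not the delicate but mechanical high-precision computation but the two pieces of bookkeeping that feed it: determining $F_\infty(A)$ — equivalently the integer $b$ together with the correct branch of the multivalued functions of Theorem \ref{thm:main} at $t=-2$ and $t=-\tfrac12$ — and, in the CM case $t=-2$, checking that the period normalisation built into Theorem \ref{thm:1} matches the arithmetic normalisation of $L(X_{-2, K},2)$, so that the Hecke-character factorisation of that $L$-function does not smuggle in an extra algebraic factor. Once these are settled the statement reduces to substituting values of hypergeometric functions into a $2\times2$ determinant.
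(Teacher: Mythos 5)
Your proposal is correct and follows essentially the same route as the paper: evaluate the regulator entries numerically via Theorem \ref{thm:main} (Mathematica), compute $L(X_{t,K},2)$ independently (Magma), compare, and deduce linear independence from $R_t\neq 0$. The one simplification you miss is that the first-column $B$-entries vanish identically: since $r_{\D,K}(\xi_{{\rm Hes},-2,K})=r_{\D,\Q}(\xi_{{\rm Hes},-2})$ and $r_{\D,K}(\xi'_{-\frac12,K})=r_{\D,\Q}(\xi'_{-\frac12})$ are fixed by $F_\infty\otimes c_\infty$ while $F_\infty(B)=B$, one gets $r_{\D,K}(\cdot)(B)=0$ for these classes, so each determinant collapses to the product $\bigl|\tfrac1{2\pi i}r_{\D,K}(\xi_{\Q\text{-rational}})(\gamma)\cdot\tfrac1{2\pi i}r_{\D,K}(\xi(\zeta)_t)(B)\bigr|$; this removes the need to determine $F_\infty(A)$ (your integer $b$) or to evaluate the $\gamma$-entries of $\xi(\zeta)_{-2}$ and $\xi(\zeta)_{-\frac12}$ at all, which is where most of your branch-tracking effort would otherwise go.
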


\begin{proof}
Since $r_{\D, K}(\xi_{{\rm Hes}, -2, K})=r_{\D, \Q}(\xi_{{\rm Hes}, -2})$ 
(resp. $r_{\D, K}(\xi'_{-\frac12, K})=r_{\D, \Q}(\xi'_{-\frac12})$) is fixed by $F_{\infty} \otimes c_{\infty}$ and $F_{\infty}(B)=B$, we have $r_{\D, K}(\xi_{{\rm Hes}, -2, K})(B)=0$ (resp.  $r_{\D, K}(\xi_{{\rm Hes}, -2, K})(B)=0$).   
Therefore we have
\begin{align*}
&R_{-2}= \left| \frac{1}{2 \pi i}r_{\D, K}(\xi_{{\rm Hes}, -2, K})(\gamma) \cdot \frac{1}{2 \pi i}r_{\D, K}(\xi(\zeta)_{-2})(B) \right|, \\
&R_{-\frac12}= \left| \frac{1}{2 \pi i}r_{\D, K}(\xi'_{-\frac12, K})(\gamma) \cdot \frac{1}{2 \pi i}r_{\D, K}(\xi(\zeta)_{-\frac12})(B) \right|.
\end{align*}
We compute the regulators by Theorem \ref{thm:main} using Mathematica, and the value of the $L$-function using Magma as 
\begin{align*}
&\frac{1}{2 \pi i}r_{\D, K}(\xi_{{\rm Hes}, -2, K})(\gamma) 
=-1.80071452138923251950118540574,
\\
&\frac{1}{2 \pi i}r_{\D, K}(\xi(\zeta)_{-2})(B)
 = -5.40214356416769755850355621723, \\
&L(X_{-2, K}, 2) =0.770263235106996761384701873629, \\
\intertext{and}
&\frac{1}{2 \pi i}r_{\D, K}(\xi'_{-\frac12, K})(\gamma) =14.4089573238768907909417772888, \\
&\frac{1}{2 \pi i}r_{\D, K}(\xi(\zeta)_{-\frac12})(B) =-2.31650091536356314247467082900, \\
&L(X_{-\frac12, K}, 2) =0.991115983384380609583674632211,
 \end{align*}
 and the theorem follows. 
\end{proof}

\begin{rmk}  \label{regvalue}
When $t=0$, 
we have $H^2_{\mathscr{M}}(X_{0, K}, \Q(2))_{\Z}=H^2_{\mathscr{M}}(X_{0, K}, \Q(2))$ since $X_0$ has complex multiplication, 
and put 
the regulator determinant as 
\begin{align*}
R_{0}& =\left| \operatorname{det} \begin{pmatrix}
\frac{1}{2 \pi i}r_{\D, K}(\xi{(\rho)_0} )(\gamma) & \frac{1}{2 \pi i}r_{\D, K}(\xi(\rho \zeta)_{0}) (\gamma) \\
\frac{1}{2 \pi i}r_{\D, K}(\xi(\rho)_0 )(B) & \frac{1}{2 \pi i}r_{\D, K}(\xi(\rho \zeta)_{0} )(B)
\end{pmatrix}\right|. 
\end{align*}
Similarly as the proof of Theorem \ref{quad}, we have $r_{\D, K}(\xi(\rho)_0)(B)=0$. By Theorem \ref{thm:main} (ii), we have
\begin{align*}
R_0 &= \left| \frac{1}{2 \pi i}r_{\D, K}(\xi(\rho)_0)(\gamma) \cdot \frac{1}{2 \pi i}r_{\D, K}(\xi(\rho \zeta)_0)(B) \right| \\
&=\frac{27}{4\pi^2} \left (
 B_{\frac13}{_{3}F_{2}}\left[ \left. 
\begin{matrix}
\frac13,\frac13,1 \\
\frac{4}{3}, \frac{2}{3}
\end{matrix}
\right| 1
\right]
-
\frac12
B_{\frac23}{_{3}F_{2}}\left[ \left. 
\begin{matrix}
\frac23,\frac23,1 \\
\frac{5}{3}, \frac{4}{3}
\end{matrix}
\right| 1
\right]\right)^2. 
\end{align*}
On the other hand, 
for a prime $v \nmid 3$ of $K$, 
let $\mathbb{F}_v$ denote the residue field at $v$, and let $\chi_v\colon \mathbb{F}_v^* \to \mu_3$ be the cubic residue character modulo $v$. 
Then the Jacobi sum is defined by
$$j_3(v)=-\sum_{\substack{{x, y \in \mathbb{F}_v^*}\\{x+y=1}}} \chi_v(x)\chi_v(y). $$
By \cite[p.494]{Weil}, $j_3$ is the Hecke character associated with $X_{0}$, hence
the $L$-function of $X_{0, K}$ is 
$$L(X_{0, K}, s)=L(j_3, s)^2.$$
Otsubo \cite[Theorem 5.2]{Otsubo2} and Rogers-Zudilin \cite[Theorem 1]{RZ} prove that 
$$L(j_3, 2)=\frac{2\pi}{27\sqrt{3}}
\left(B_{\frac13}{_{3}F_{2}}\left[ \left. 
\begin{matrix}
\frac13,\frac13,1 \\
\frac{4}{3}, \frac{2}{3}
\end{matrix}
\right| 1
\right]
-
\frac12
B_{\frac23}{_{3}F_{2}}\left[ \left. 
\begin{matrix}
\frac23,\frac23,1 \\
\frac{5}{3}, \frac{4}{3}
\end{matrix}
\right| 1
\right]
\right),$$
hence we have
\begin{equation*}
R_0=\dfrac{3^{10}}{(2\pi)^4}L(X_{0, K},  2).
\end{equation*}
\end{rmk}

\begin{rmk}
It is known that $X_{-2}$ and $X_0$ are isogenous over $\Q$.    
Since $L(X_{0}, 2)=L(X_{-2}, 2)$ and $L(X_{-2, K},2)=L(X_{-2}, 2)^2$, our computations suggest the following relations among ${}_3F_2$-values: 
\begin{align*}
&B_{\frac13}{_{3}F_{2}}\left[ \left. 
\begin{matrix}
\frac13,\frac13,1 \\
\frac{4}{3}, \frac{2}{3}
\end{matrix}
\right| 1
\right]
-
\frac12
B_{\frac23}{_{3}F_{2}}\left[ \left. 
\begin{matrix}
\frac23,\frac23,1 \\
\frac{5}{3}, \frac{4}{3}
\end{matrix}
\right| 1
\right] \\
& =
2B_{\frac13}{_{3}F_{2}}\left[ \left. 
\begin{matrix}
\frac13,\frac13,\frac13 \\
\frac{4}{3}, \frac{2}{3}
\end{matrix}
\right| -8
\right]
-
2
B_{\frac23}{_{3}F_{2}}\left[ \left. 
\begin{matrix}
\frac23,\frac23,\frac23 \\
\frac{5}{3}, \frac{4}{3}
\end{matrix}
\right| -8
\right] \\
& =
\frac{2}{\sqrt[3]{9}}B_{\frac13}{_{3}F_{2}}\left[ \left. 
\begin{matrix}
\frac13,\frac13,\frac13 \\
\frac{4}{3}, \frac{2}{3}
\end{matrix}
\right| \frac89
\right]
+
\frac{2}{3\sqrt[3]{3}}
B_{\frac23}{_{3}F_{2}}\left[ \left. 
\begin{matrix}
\frac23,\frac23,\frac23 \\
\frac{5}{3}, \frac{4}{3}
\end{matrix}
\right| \frac89
\right], 
\end{align*}
which does not seem to follow from known formulas. 
\end{rmk}

\subsection{Relation with Mahler measures}
The Mahler measure of a polynomial $P(x, y)$ is a real number defined by 
$$m(P)=\frac{1}{(2 \pi i)^2}\int_{T}\log|P(x, y)|\, \frac{dx}{x} \frac{dy}{y},$$
where $T=S^1 \times S^1$ is the real torus. 
Relations between Mahler measures and regulators were first found by Deninger \cite{Deninger}. 
Let $m(t)$ be the Mahler measure for 
$$P(x, y)=x^3+y^3+1-3txy.$$
The symbol $\{x, y\}$ defines an element of $H^2_{\mathscr{M}}(X_t, \Q(2))$, as one sees easily the vanishing of the tame symbols. 
Let $\mathscr{K}$ be the set of the values of the function $\frac{x^3+y^3+1}{3xy}$ on the torus $T$. 
Then $\mathscr{K}$ is a curved triangle with vertices at the cubic roots of unity, whose intersection with the real line is $[-1/3, 1]$ (\cite[Proposition 2.3]{Mellit}).
Deninger \cite[Proposition 3.3]{Deninger} 
proves that 
when $t \in S(\Q) \setminus \mathscr{K}$, 
$$\gamma':=X_t \cap \{|x|=1, |y| > 1\}$$
defines an element of $H_1(X_t(\C), \Q)^-$, and 
\begin{align} \label{RV}
m(t)=-\frac{1}{2 \pi i} r_{\D, \Q}(\{x, y\})(\gamma'). 
 \end{align}
 When $t=-1/3$, \eqref{RV} still holds by a continuity. 
Rodriguez Villegas \cite[Section 10, ($?_4$)]{Rod} suggests that 
if $3t \in \Z$, then $\{x, y\}$ is integral. 
This question is answered by Theorem \ref{Hesse symbol} since $\{x, y\}$ and $\xi_{{\rm Hes}, t}$ agree (Proposition \ref{prop:element} below). 
Rogers  \cite[Theorem 3.1 (43)]{Rog} expresses $m(t)$ 
in terms of ${}_3F_2(t^3)$-values. 
The formula of Corollary \ref{maincor} agrees with the combination of Roger's formula, \eqref{RV} and Proposition \ref{cycle} below.   

The cases when $3t=-6$, $-3$, $-2$, $-1$, $5$ are of particular interest. 
We remark that $X_{-\frac13}$ and $X_{\frac53}$ are isogenous. 
It is known that 
$X_{-2}$ (resp. $X_{-1}$, $X_{-\frac23}$, $X_{-\frac13}$) 
is isogenous to the modular curve $X_0(27)$ (resp. $X_0(54)$, $X_0(35)$, $X_0(14)$),
hence the Bloch-Beilinson conjecture is true by Beilinson \cite{Beilinson}.

By the works of 
Mellit \cite[Section 5]{Mellit}, 
Brunault \cite[Theorem 24]{Brunault2} and 
Rodriguez Villegas \cite[Section 14]{Rod},  
we have the following formulas:
\begin{align*}
m\left(-2\right)&=\frac{81}{4\pi^2}L(X_{-2}, 2) \\ 
m\left(-1\right)&=\frac{27}{2\pi^2}L(X_{-1}, 2), \\    
m\left(-\frac23\right)&=\frac{35}{4\pi^2}L(X_{-\frac23}, 2), \\  
m\left(-\frac13\right)&=\frac{7}{\pi^2}L(X_{-\frac13}, 2),  \\ 
m\left(\frac53\right)&=\frac{49}{2\pi^2}L(X_{\frac53}, 2).
\end{align*}
By these formulas and \eqref{RV} together with Proposition \ref{prop:element} and Proposition \ref{cycle} below, 
we obtain rigorously 
\begin{equation} \label{value}
Q_{-2}=-\frac{81}4, \quad Q_{-1}=-\frac{27}2, \quad Q_{-\frac23}=-\frac{35}4, \quad Q_{-\frac13}=-7, \quad Q_{\frac53}=-\frac{49}2
\end{equation}  (see Theorem \ref{num}).

\begin{ppn} \label{prop:element}
For $t \in S(\overline{\Q})$, we have $\xi_{{\rm Hes}, t}=\{x, y\}$. 
\end{ppn}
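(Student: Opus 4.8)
The plan is to verify directly that the symbol $\{x, y\} \in H^2_{\mathscr{M}}(X_t, \Q(2))$ coincides with $\xi_{{\rm Hes}, t} = \frac13 \sum_{z \in \mu_3} \xi(\rho z)_t$ by comparing these classes modulo the (torsion, hence trivial after $\otimes\,\Q$) ambiguity. Since the motivic cohomology group here is $\operatorname{Ker} T \otimes \Q$ inside $K_2^M(\C(X_t))$, it suffices to show that $3\{x, y\} - \sum_{z \in \mu_3} \xi(\rho z)_t$ vanishes in $K_2^M(\C(X_t)) \otimes \Q$, or more efficiently, to use an injectivity statement. The natural tool available in the excerpt is Theorem \ref{thm: inj}: the Bloch map $\beta$ on $H^2_{\mathscr{M}}(X_t, \Q(2))/K_2(\C) \otimes \Q$ is injective (here $X_t$ is an elliptic curve over $k = \overline{\Q}$, and $K_2(\overline{\Q})$ is torsion so disappears after $\otimes\,\Q$). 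Thus the strategy reduces to computing $\beta(\{x, y\})$ and $\beta(\xi_{{\rm Hes}, t})$ in the elliptic Bloch group $B_3^*(X_t) \otimes \Q$ and checking they agree.

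First I would compute the divisors of $x$ and $y$ on $X_t$. Writing $x = x_0/z_0$, $y = y_0/z_0$, the function $x$ has divisor supported on $\{x_0 = 0\}$ (a degree-3 intersection with the cubic) and $\{z_0 = 0\}$; similarly for $y$. The three points on $\{x_0 = 0\} \cap X_t$ and the three points on $\{z_0 = 0\} \cap X_t$ are all $3$-torsion (flex) points, of the form $\rho^i \zeta^j(O)$; this is exactly the structure already exploited in Definition \ref{def:1} and in the proof of Proposition \ref{lemm:action}. Concretely, $\{z_0 = 0\}$ meets $X_t$ in the points $R_\zeta = [1 : -\zeta : 0]$ for $\zeta \in \mu_3$, and $\{x_0 = 0\}$ in $[0 : -\zeta : 1]$, etc. So $\operatorname{div}(x) = \sum (\text{flexes on } x_0 = 0) - \sum (\text{flexes on } z_0 = 0)$ and likewise $\operatorname{div}(y)$, and then $\beta(\{x, y\}) = \sum_{i,j} m_i n_j (P_i - Q_j) \in \Z[X_t[3]]$, which I would evaluate using the group law of Kaneko recalled in the excerpt (so that differences of flexes are again flexes). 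On the other side, from the explicit formulas $\xi(\rho z)_t = \{f_z, g_z\}$ with $f_z = -\frac{t+x+y}{z + x + z^2 t y}$, $g_z = -\frac{z^2 + tx + zy}{t+x+y}$, I would read off the divisors (each of the six linear-form factors cuts out three flex points), compute $\beta(\xi(\rho z)_t)$, and sum over $z \in \mu_3$.

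The bulk of the work, and the main obstacle, is the bookkeeping: identifying precisely which flex points $\rho^i\zeta^j(O)$ appear in each of the six linear divisors $\{x_0 + \zeta^j t y_0 + \zeta^{?} z_0 = 0\}$ and $\{x_0 = 0\}$, $\{z_0 = 0\}$, etc., and then carrying out the sum of $\pm(P - Q)$ terms in $\Z[X_t[3]]$ modulo the relations defining $B_3^*(X_t)$. Since everything lives over the finite group $X_t[3] \cong (\Z/3)^2$, this is a finite check, but one must be careful with signs (recall $-[x:y:z] = [y:x:z]$) and with the factor-of-$3$ multiplicities. I expect the two Bloch-group elements to match on the nose once the flex labels are pinned down; an alternative, possibly cleaner route — which I would mention as a remark — is to instead show $3\{x,y\}$ and $\sum_z \xi(\rho z)_t$ have the same image under the composite $\operatorname{dlog}$-plus-residue map $\Phi$ of Proposition \ref{lemm:image} together with the regulator, but since $\Phi$ alone is not injective on all of $H^2_{\mathscr{M}}(U, \Q(2))$ one still needs the Bloch-map input, so the Bloch-group computation is the cleanest self-contained argument. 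Finally, since the identity is one of classes in $\operatorname{Ker} T \otimes \Q$ and both sides are manifestly defined over $\overline{\Q}$ (indeed over $\Q(\mu_3)$), the equality over $\C$ descends, giving the proposition for all $t \in S(\overline{\Q})$.
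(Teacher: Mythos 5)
Your overall strategy --- compute the divisors of the entries of each symbol, push both classes into a Bloch group via $\beta$, and conclude by an injectivity theorem --- is exactly the paper's, but there is a concrete gap at the final step: the two elements do \emph{not} ``match on the nose'' in $I^4\subset\Z[X_t(\overline{\Q})]$, and it is not clear that their difference lies in $R_3^{\ast}(X_t)\otimes\Q$. Carrying out your bookkeeping one finds
$$\beta(\{x,y\})=3\sum_{z\in\mu_3}\bigl((z(O))+(\rho z(O))\bigr)-6\sum_{z\in\mu_3}(\rho^2 z(O)),\qquad
\beta(\xi_{\mathrm{Hes},t})=6\sum_{z\in\mu_3}(\rho z(O))-3\sum_{z\in\mu_3}(\rho^2 z(O))+9(O),$$
whose difference is a nonzero integral combination of divisors of the form $(P)+(-P)$ (e.g.\ $(\zeta(O))+(\zeta^2(O))-2(O)$ and $\sum_z\bigl((\rho z(O))+(\rho^2 z^{-1}(O))\bigr)$, using $-[x:y:z]=[y:x:z]$). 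The relations defining $B_3^{\ast}(X_t)$ are only those of the form $\beta(\{f,1-f\})$, so you cannot conclude equality in $B_3^{\ast}(X_t)\otimes\Q$ from this computation; Theorem \ref{thm: inj} alone does not finish the argument.

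The paper's proof resolves precisely this point by introducing the further quotient $B_3(E)=\bigl(\Z[E(\overline{\Q})]/R_3(E)\bigr)^{\operatorname{Gal}(\overline{\Q}/k)}$, where $R_3(E)$ contains the divisors $(P)+(-P)$ \emph{by definition}, and by invoking Brunault's theorem (\cite[Theorem B.4]{Brunault}) that the canonical map $\varphi\colon B_3^{\ast}(E)\otimes\Q\to B_3(E)\otimes\Q$ is injective. In $B_3(E)\otimes\Q$ the inversion relations kill exactly the discrepancy above, both images become $9\sum_{z\in\mu_3}(\rho z(O))$, and the chain of injectivities $\beta$, $\varphi$ gives the proposition. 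So to repair your argument you need this one additional input: either cite the injectivity of $\varphi$ into the modified Bloch group, or prove directly that $(P)+(-P)$ is trivial in $B_3^{\ast}(X_t)\otimes\Q$ for the relevant $3$-torsion points $P$ --- which is not something the cited results hand you. The rest of your plan (divisor computation at the flexes, use of the Kaneko et al.\ group law, descent of the identity from $\C$ to $\overline{\Q}$) is sound and matches the paper.
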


To prove the proposition, we compare their images under the modification of the Bloch map $\beta$ (see Section 3.2) as follows. 
Let $k$ be a number field and $E$ be an elliptic curve over ${k}$.  
Let $R_3(E)$ be the subgroup of $\Z[E(\overline{\Q})]$ generated by the divisors
$\beta(\{f, 1-f\})$ with $f \in \overline{\Q}(E)$, $f \neq 0, 1$, as well as the divisors $(P)+(-P)$ with $P \in E(\overline{\Q})$. 
We define the modification of the Bloch group by
$$B_3(E)=\left(\Z[E(\overline{\Q})]/R_3(E)\right)^{\operatorname{Gal}(\overline{\Q}/k)}.$$
Then Brunault {\cite[Theorem B.4]{Brunault}} proves that the canonical map
$$\varphi \colon  B^*_3(E) \otimes \Q \to B_3(E) \otimes \Q$$
is injective.

\begin{proof}[ Proof of Proposition \ref{prop:element}]
If $P=\rho^iz(O)$, then
$-P=\rho^{-i}z^{-1}(O)$ ($i \in \Z/3\Z$, $z \in \mu_3$).
Hence $\varphi(\rho^iz(O))+\varphi(\rho^{-i}z^{-1}(O))=0.$
We have
$$ \varphi \circ \beta(\{x, y\})=3\sum_{z \in \mu_3}(\varphi((z(O))) +\varphi((\rho z(O)))) - 6\sum_{z \in \mu_3} \varphi((\rho^2z(O)))=9\sum_{z \in \mu_3} (\rho z(O)).$$
On the other hand, we have
\begin{align*}
\varphi \circ \beta(\xi_{{\rm Hes},t})&=
6\sum_{z \in \mu_3}\varphi((\rho z(O)))-3 \sum_{z \in \mu_3} \varphi((\rho^2 z(O))) + 9\varphi((O)) =9\sum_{z \in \mu_3} (\rho z(O)).
\end{align*}
Since $\varphi$ and $\beta$ are injective, we conclude that $\xi_{{\rm Hes}, t}=\{x, y\}$. 
\end{proof}

\begin{rmk} \label{regRZ}
One can prove that $\xi_{{\rm Hes}, t}$ and $\{x, y\}$ have the same regulator for any $t \in S(\C)$ as follows. 
By Proposition \ref{ppn:1} and a similar computation, 
$$\operatorname{dlog}(\{x, y\})= \operatorname{dlog}(\xi_{{\rm Hes}, t})=dt \wedge \omega.$$
It remains to compare the regulators at $t=0$. Then $\{x, y\}=0$ and the regulator of $\xi_{{\rm Hes}, t}$ is trivial by Theorem  \ref{thm:main} (ii).   
\end{rmk}

\begin{ppn} \label{cycle}
Suppose that $t \in S(\C)$ and that $t$ is not contained in the interior of $\mathscr{K}$.  
Then the cycle $\gamma=A-F_{\infty}(A) \in H_1(X_t(\C), \Q)^-$ defined in \eqref{gamma} coincides with $\gamma'$. 
\end{ppn}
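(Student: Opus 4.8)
The plan is to identify the two cycles by tracking them through the explicit description of $X_t(\C)$ near the locus $|x|=1$, and to compare their images in $H_1(X_t(\C),\Q)^-$ using the defining construction of $A$ from Section~2. First I would recall that $\gamma'=X_t\cap\{|x|=1,\ |y|>1\}$ is Deninger's cycle, which is well-defined (as a class in the $(-1)$-eigenspace) precisely because $t$ avoids the interior of $\mathscr{K}$: on the torus $T$ the function $\frac{x^3+y^3+1}{3xy}$ takes values in $\mathscr{K}$, so for $t\notin\mathring{\mathscr{K}}$ the curve $X_t$ meets $\{|x|=|y|=1\}$ only on the boundary, and $\{|x|=1\}\cap X_t$ splits into the part with $|y|>1$ and its complex-conjugate image with $|y|<1$; their difference is $\gamma'$ and lies in $H_1(X_t(\C),\Q)^-$ since $F_\infty$ (complex conjugation on coordinates) swaps the two halves.

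Next I would unwind the construction of $A$. Recall $A=(1+\zeta+\zeta^2)(\lambda')$ where $\lambda'$ is the path from $P=[0:-1:1]$ to $\zeta(P)=[0:-\zeta^2:1]$ whose $y$-coordinate traces two segments through $\zeta-1$, defined first at $t=1/\sqrt[3]{2}$ and then continued. Since $\zeta^*$ acts trivially on $\omega_t$ and $\eta_t$, the element $A$ is $\zeta$-invariant, and the key point is that $F_\infty$ acts on $X_t(\C)$ by $[x_0:y_0:z_0]\mapsto[\bar x_0:\bar y_0:\bar z_0]$ (for $t$ real, or more precisely $t=1/\sqrt[3]2$), sending $\zeta\mapsto\zeta^2$; so $F_\infty$ conjugates the path $\lambda'$ into a path from $P$ to $\zeta^2(P)$ with $y$-coordinate through $\bar\zeta-1=\zeta^2-1$, and $F_\infty(A)=(1+\zeta+\zeta^2)(F_\infty\lambda')$. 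Then $\gamma=A-F_\infty(A)$ is a genuine $1$-cycle in the $(-1)$-eigenspace. The heart of the argument is to show this cycle is homologous to $\gamma'$: I would do this by deforming $t$ to a convenient base point (say a real $t_0$ small and negative, safely outside $\mathscr{K}$, or $t_0=1/\sqrt[3]2$ itself after checking it lies outside $\mathring{\mathscr K}$ — indeed $\mathscr K\cap\R=[-1/3,1]$ and $1/\sqrt[3]2\approx0.79<1$ so it is on the boundary, which requires the continuity remark as in the $t=-1/3$ case) and then exhibiting an explicit homotopy between the $1$-chain $\lambda'-F_\infty(\lambda')$ (summed over the $\mu_3$-action) and the arc $\gamma'$ sitting on $\{|x|=1\}$. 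Concretely, the union $\lambda'\cup\zeta(\lambda')\cup\zeta^2(\lambda')$ is a loop through the three flex points $P,\zeta(P),\zeta^2(P)$, and on this loop the $x$-coordinate (which vanishes at the $\zeta^j(P)$) and the $y$-coordinate can be shown to stay on the locus $|x|\le1$, with the boundary $|x|=1$ swept out exactly by the real $2$-chain between $\gamma$ and $\gamma'$; pairing with $\omega_t$ and $\eta_t$ via Theorem~\ref{thm:1} and Beilinson's formula would confirm the identification numerically as a consistency check, but the topological homotopy is what I would actually write down.

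The main obstacle will be the explicit topology: making rigorous the claim that the $\zeta$-symmetrized path $\lambda'$ bounds, together with $\gamma'$, a $2$-chain lying in $X_t(\C)\cap\{|x|\le1\}$, i.e. correctly matching orientations and endpoints of the two $\mu_3$-orbits. This amounts to understanding $X_t(\C)$ as a branched triple cover of the $x$-line (via $x\mapsto(x,y)$ with $y^3-3txy+(x^3+1)=0$), locating the branch points relative to the unit circle $|x|=1$ for $t$ outside $\mathring{\mathscr K}$, and checking that the three flex points $\zeta^j(P)=[0:-\zeta^{2-j}\cdot\zeta^{?}:1]$ all lie over $x=0$ inside the disk while $R_\zeta=[1:-\zeta:0]$ lies ``at infinity'' in $x$; the homotopy then deformation-retracts the exterior region. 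I expect this to reduce, as in \cite[Proposition 3.3]{Deninger} and the discussion in \cite{kts}, to a picture of the real two segments through $\zeta-1$ in the $y$-plane degenerating onto the circle $|x|=1$; once that picture is in place the identification $\gamma=\gamma'$ is immediate, and the factor (both are primitive generators of the rank-one lattice $H_1(X_t(\C),\Q)^-$, up to sign, and the orientations agree by tracing endpoints) fixes the sign.
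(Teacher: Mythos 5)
Your reduction to a proportionality constant is the right first move and matches the paper: since $H_1(X_t(\C),\Q)^-$ is a rank-one local system on $S(\C)$, one only needs $\gamma'=c\gamma$ for a single constant $c\in\Q^*$. But the step that actually determines $c=1$ is missing. Your primary plan --- exhibiting a $2$-chain in $X_t(\C)\cap\{|x|\le 1\}$ with boundary $\gamma-\gamma'$ by tracking the $\mu_3$-symmetrized paths $\lambda'$ through the triple cover $x\colon X_t(\C)\to\mathbb{P}^1$ --- is exactly the part you flag as ``the main obstacle,'' and it is never carried out; as written it is a description of what a proof would look like, not a proof. Your fallback, that $\gamma$ and $\gamma'$ are both ``primitive generators of the rank-one lattice'' so the constant is $\pm1$, is unjustified: there is no a priori reason that $A-F_\infty(A)$ is primitive in $H_1(X_t(\C),\Z)$ (a difference $x-F_\infty(x)$ in a rank-two lattice with involution is frequently divisible by $2$), nor that $\gamma'$ is. There are also two factual slips in the choice of base point: $\mathscr{K}\cap\R=[-1/3,1]$, so $1/\sqrt[3]{2}\approx 0.79$ lies in the \emph{interior} of $\mathscr{K}$ (not on its boundary), and a ``small negative'' real $t_0$ is likewise inside $\mathscr{K}$; you would need $t_0<-1/3$ or $t_0>1$ for $\gamma'$ to be defined.

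The paper closes the gap analytically rather than topologically: it pairs both cycles with $\omega_t$ and compares asymptotics as $t\to\infty$. On one side, Theorem \ref{thm:1} plus a quadratic/linear $_2F_1$ transformation gives $t\int_\gamma\omega_t=-2\pi i\,{}_2F_1\!\left(\tfrac13,\tfrac23;1;t^{-3}\right)\to -2\pi i$; on the other, writing $P(x,y)=\prod_i(y-y_i(x))$ with exactly one root inside the unit circle (valid for $t\notin\mathscr{K}$) and computing $t\int_{\gamma'}\omega_t$ as a contour integral over $|x|=1$ gives the same limit $-2\pi i$. Hence $c=1$, with the boundary of $\mathscr{K}$ handled by continuity. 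If you want to salvage your write-up with minimal effort, replace the unconstructed homotopy by this period comparison; the mention you make of ``pairing with $\omega_t$ \dots as a consistency check'' is in fact the whole proof, once you note that a single nonzero period determines the class in a one-dimensional space.
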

\begin{proof}
Since $H_1(X_t(\C), \Q)^-$ form a rank $1$ locally constant sheaf on $S(\C)$, there is a constant $c \in \Q^*$ such that $\gamma'=c\gamma$ for any $t \in S(\C) \setminus \mathscr{K}$.  
By Theorem \ref{thm:1}, we have
$$\int_{\gamma} \omega_t=\sqrt{3}i\left(B_{\frac13}
{_{2}F_{1}}\left[ \left. 
\begin{matrix}
\frac13,\frac13 \\
\frac{2}{3}
\end{matrix}
\right| t^3
\right]
+
B_{\frac23}
t{_{2}F_{1}}\left[ \left. 
\begin{matrix}
\frac23,\frac23 \\
\frac{4}{3}
\end{matrix}
\right| t^3
\right]\right).$$
By \cite[(1.8.1.11)]{Slater}, we have
\begin{align*}
B_{\frac13}{_{2}F_{1}}\left[ \left. 
\begin{matrix}
\frac13,\frac13 \\
\frac23
\end{matrix}
\right| t^3
\right]
+
B_{\frac23}
t{_{2}F_{1}}\left[ \left. 
\begin{matrix}
\frac23,\frac23 \\
\frac43
\end{matrix}
\right| t^3
\right] 
=
-\frac{B_{\frac13}B_{\frac23}}{3t}{_{2}F_{1}}\left[ \left. 
\begin{matrix}
\frac13,\frac23 \\
1
\end{matrix}
\right|\frac{1}{t^3}
\right].
\end{align*}
Note that $B_{\frac13}B_{\frac23}=2\sqrt{3} \pi$, hence we have
\begin{align*} 
t\int_{\gamma} \omega_t=-{2\pi i} {_{2}F_{1}}\left[ \left. 
\begin{matrix}
\frac13,\frac23 \\
1
\end{matrix}
\right|\frac{1}{t^3}
\right]
\xrightarrow{t \to \infty} -2 \pi i .
\end{align*}
On the other hand, 
if $t \not \in \mathscr{K}$,
we can write
$$P(x, y)=(y-y_0(x))(y-y_1(x))(y-y_2(x))$$
with $|y_0(x)| < 1$ and $|y_1(x)|, |y_2(x)| > 1$.
To see this, note that when $x=-1$, the roots are $y=0, \pm {\sqrt{-3t}}$, and $|\sqrt{-3t}|>1$ since $t \not \in \mathscr{K}$. 
Hence we have
\begin{align*} 
 \begin{split}
t\int_{\gamma'} \omega_t & = - \sum_{i=1}^2\int_{|x|=1} \frac{tdx}{y_i(x)^2-tx} 
=- \frac{1}{2} \sum_{i=1}^2 \int_{|x|=1} \frac{1}{1-\frac{1+x^3}{2txy_i(x)}} \frac{dx}{x}
\xrightarrow{t \to \infty} -2 \pi i. \label{cycle2}
\end{split}
\end{align*}
By comparison, we have $c=1$. 
When $t$ is on the boundary of $\mathscr{K}$, the statement follows by a continuity. 
\end{proof}

\section*{Acknowledgment}
I would like to thank sincerely  Noriyuki Otsubo for valuable discussions and many helpful comments on a draft version of this paper. 
I would like to thank Masanori Asakura and Ryojun Ito
 for helpful comments.
 I would also like to thank Detchat Samart for letting me know his paper \cite{Samart}. 
Finally, I would express his sincere gratitude to anonymous referee for many valuable suggestions and 
for strengthening the original version of Proposition \ref{lemm:action}. 

%
%
%
%
%

\end{document}